\documentclass[10pt,english]{smfart}

\usepackage[T1]{fontenc}
\usepackage[french,english]{babel}
\usepackage{latexsym,amscd,color}
\usepackage{amsmath,amsfonts,amssymb,mathrsfs}
\usepackage{enumerate,euscript}
\usepackage{amssymb,url,xspace,smfthm}
\usepackage{hyperref}
\input xy
\xyoption{all}

\renewcommand{\a}{\mathfrak a}

\DeclareMathOperator{\gal}{Gal}

\newcommand{\BibTeX}{{\scshape Bib}\kern-.08em\TeX}
\DeclareMathOperator{\Gr}{Gr}
\newcommand{\T}{\S\kern .15em\relax }
\newcommand{\AMS}{$\mathcal{A}$\kern-.1667em\lower.5ex\hbox
        {$\mathcal{M}$}\kern-.125em$\mathcal{S}$}
\newcommand{\resp}{\textit{resp}.\xspace}

\DeclareMathOperator{\im}{Im}
\DeclareMathOperator{\proj}{Proj}

\DeclareMathOperator{\spm}{Spm}

\DeclareMathOperator{\rg}{rk}

\DeclareMathOperator{\spec}{Spec}

\renewcommand{\P}{\mathbb{P}}
\newcommand{\wmu}{\widehat{\mu}}
\newcommand{\C}{\mathbb{C}}
\newcommand{\Q}{\mathbb{Q}}

\newcommand{\adeg}{\widehat{\deg}}

\newcommand{\p}{\mathfrak{p}}
\DeclareMathOperator{\sym}{Sym}
\newcommand{\q}{\mathfrak{q}}

\newcommand{\E}{\overline{E}}
\newcommand{\F}{\overline{F}}
\newcommand{\sE}{\mathcal{E}}
\newcommand{\G}{\overline{G}}

\renewcommand{\O}{\mathcal{O}}

\newcommand{\f}{\mathbb{F}}
\newcommand{\ndot}{\raisebox{.4ex}{.}}
\DeclareMathOperator{\Hilb}{Hilb}




\tolerance 400
\pretolerance 200

\title{On the rational points in conics of a cubic surface}
\alttitle{Autour des points rationnels dans coniques d'une surface cubique}

\date{\today}
\author{Chunhui Liu}
\address{Institute for Advanced Study in Mathematics\\
Harbin Institute of Technology\\150001 Harbin\\P. R. China}
\email{chunhui.liu@hit.edu.cn}

\begin{document}
\def\smfbyname{}
\begin{abstract}
In this paper, we give a uniform upper bound on the rational points of bounded height provided by conics in a cubic surface. For this target, we give a generalized version of the global determinant method of Salberger by Arakelov geometry.
\end{abstract}
\begin{altabstract}
Dans cet article, on donnera une borne uniforme des points rationnels de hauteur born\'ee fourni par les coniques dans une surface cubique. Pour le but, on donne une version g\'en\'eralis\'ee de la m\'ethode globale de d\'eterminant de Salberger par la g\'eom\'etrie d'Arakelov. 
\end{altabstract}

\maketitle

\tableofcontents
\section{Introduction}
In number theory and arithmetic geometry, studying the density of rational points on arithmetic varieties is a central subject. In fact, we have the following way to describe the density of rational points in a projective variety over a number field. Let $K$ be a number field and $X$ be a closed subscheme of $\mathbb P^n_K$. For a rational point $\xi=[x_0:\cdots:x_n]\in X(K)$ with respect to an embedding into $\mathbb P^n_K$, we define the \textit{height} of $\xi$ as 
\[H_K(\xi)=\prod_{v\in M_K}\max\limits_{0\leqslant i\leqslant n}\{|x_i|_v^{[K:\Q]}\}.\]
Let
\[S(X;B)=\{\xi\in X(K)\mid H_K(\xi)\leqslant B\},\]
where we omit the closed immersion of $X$ into $\mathbb P^n_K$. By the Northcott property, we have 
\[\#S(X;B)<+\infty\]
for a fixed positive real number $B$. Thus, we may regard $\#S(X;B)$ as a function of $B\in\mathbb R_{\geqslant0}$, which is useful for studying the density of rational points on $X$.
\subsection{History of determinant methods}
There are various methods to study the number of rational points of bounded height in arithmetic varieties, and we will focus on the so-called \textit{determinant method}. This method plays a significant role in studying uniform upper bounds for $\#S(X;B)$, as a function of $B\in\mathbb R_{\geqslant1}$, over certain families of arithmetic varieties $X$. Roughly speaking, over $\Q$, one expects that for a certain degree, monomials evaluated at points in $S(X;B)$ with the same reduction modulo certain primes form a matrix whose determinant vanishes due to a local estimate. In this process, the famous Siegel's lemma is applied, whence the name determinant method.
\subsubsection{}
In \cite{Bombieri_Pila} (see also \cite{Pila95}), Bombieri and Pila introduced a determinant argument for plane affine curves over $\mathbb Q$, proving that $\#S(X;B)\ll_{\delta,\epsilon}B^{2/\deg(X)+\epsilon}$ for all $\epsilon>0$ and all absolutely irreducible curves $X$.

Heath-Brown generalized the work in \cite{Bombieri_Pila} to the higher dimensional case over $\mathbb Q$ in \cite{Heath-Brown}. In addition to obtaining fruitful results on uniform upper bounds for $\#S(X;B)$ for different kinds of $X$, he also formulated the famous \textit{dimension growth conjecture}, which says that for any geometrically integral variety $X$ over $\mathbb Q$, if $\dim(X)=d\geqslant2$ and $\deg(X)=\delta\geqslant2$, we have 
\[\#S(X;B)\ll_{d,\delta,\epsilon}B^{d+\epsilon}\]
for all $\epsilon>0$, while Serre \cite{Serre1997} proposed a non-uniform version. This conjecture has been a major motivation for the development of the determinant method. This work was generalized by Broberg \cite{Broberg04} to an arbitrary number field.

In order to resolve the dimension growth conjecture, considerable effort has been devoted to refining the determinant method, for example, in \cite{Browning_Heath05,Browning_Heath06I,Browning_Heath06II,Ellenberg-Venkatesh2005,Bro_HeathB_Salb,Salberger07} by Browning, Ellenberg, Heath-Brown, Venkatesh, Salberger etc. In particular, Salberger \cite{Salberger_preprint2013} proposed the so-called global determinant method, which proved the dimension growth conjecture for the degree greater than $4$. 

Some results in \cite{Salberger_preprint2013} were improved by \cite{Walsh_2015,CCDN2020}, and was generalized over a global field by \cite{ParedesSasyk2022}. 
\subsubsection{}
For studying the contribution of conics in surfaces to rational points of bounded height, Salberger \cite{Salberger2008} focused on the Hilbert scheme of conics in particular surfaces, and he obtained some uniform bound results on the complement of lines in surfaces. In \cite{Salberger2015,Salberger_preprint2013}, the previous method was improved. 

The main idea is to embed the Hilbert scheme into a particular Grassmannian, and consider its Pl\"ucker coordinate. By this operation, we may give a quantitative description of the height of a conic by that of its corresponding point via the Pl\"ucker coordinate. Using geometric and arithmetic properties of the Hilbert scheme embedded in a suitable projective space, one can quantify the intuition that a surface cannot contain too many conics of small height. This estimate helps control the number of points of bounded height lying on conics.
\subsubsection{}
The determinant method can be formulated by Arakelov geometry. In \cite{Chen1,Chen2}, H. Chen reformulated the work of Salberger \cite{Salberger07}. By this formulation, it is easier to obtain an explicit estimate and to work over an arbitrary number field compared with \cite{Broberg04,ParedesSasyk2022}. In the formulation of Arakelov geometry, the role of Siegel's lemma was replaced by the evaluation map in the slope method of Bost \cite{BostBour96}. 

Following the strategy of \cite{Chen1,Chen2}, the author reformulated the global determinant method of \cite{Salberger_preprint2013,CCDN2020} using Arakelov geometry in \cite{Liu2022d}, while \cite{ParedesSasyk2022} applied the traditional formulation over a global field around the same time. 
\subsection{The density of rational and integral points in cubic surfaces}
In this article, we generalize the global determinant method of Salberger \cite{Salberger_preprint2013}, and see also \cite{CCDN2020,ParedesSasyk2022} for the study of more general objects. More precisely, instead of treating a usual hypersurface, we build the global determinant method for varieties which are hypersurfaces in a linear subvariety of the base space in Theorem \ref{global determinant of hypersurface in a linear subvariety}. For this purpose, a uniform lower bound for the arithmetic Hilbert--Samuel function with an optimal leading term, established in \cite{Liu2024b} for such varieties, is obligatory.

This allows us to deal with the conics in a cubic surface in $\mathbb P^3$ differently. Since a conic in $\mathbb P^3$ lies in a plane, the generalized global determinant method applies in this setting.
\subsubsection{}
We proved the following result on the density of rational points in conics in a cubic surface in Theorem \ref{estimate of rational points in conics of cubic}. 
\begin{theo}
Let $X$ be a non-ruled cubic surface in $\mathbb P^3_K$, then there is a constant $C_0(K)$ depending only on $K$, such that the conics in $X$ contribute at most 
\[C_0(K)B^{\frac{3\sqrt3}{8}+1}\max\{\log B,2\}\]
rational points of height smaller than $B\in\mathbb R_{\geqslant0}$. 
\end{theo}
\subsubsection{}
Motivated by \cite{Bro_HeathB_Salb,Salberger_preprint2013,CCDN2020,ParedesSasyk2022}, it is important to study the affine hypersurface which is not a cylinder and whose homogeneous highest degree part is absolutely irreducible. In fact, if we can prove the affine version of dimension growth conjecture for a particular degree, we will prove the general version for the same degree. 

We have an affine version of the global determinant method in Theorem \ref{control of integral points} for hypersurfaces in a linear subvariety. As an application, the result below is proved in Theorem \ref{estimate of integral points in conics of cubic}.
\begin{theo}
Let $X$ be a surface in $\mathbb A^3_K$, which is not a cylinder and defined by a polynomial of degree $3$ with an absolutely irreducible highest degree homogeneous part. Then there is a constant $C'_0(K)$ depending only on $K$, such that the conics in $X$ contribute at most 
\[C'_0(K)B^{\frac{\sqrt3}{4}+\frac{1}{2}}\max\{\log B,2\}\]
integral points of height smaller than $B\in\mathbb R_{\geqslant0}$.
\end{theo}
\subsection{The role of Chow forms and Cayley forms}
By the arithmetic intersection theory developed in \cite{Gillet_Soule-IHES90}, the height of a variety, in particular, a closed point, can be considered as a kind of arithmetic intersection number. In \cite{BGS94}, several heights of a same arithmetic variety are compared uniformly, in particular, the naive of the polynomial which defines its Chow form (or called Cayley form for the Pl\"ucker coordinate case in this article), and a useful version was given in \cite[Proposition 3.7]{Liu-reduced}. This allows us to involve the height of a particular homogeneous highest degree part of the so-called Cayley form in the affine version of global determinant method in the above Theorem \ref{control of integral points}. 

By the functoriality of Deligne pairing of the arithmetic intersection in \cite{YuanZhang2023}, the height of a quadratic curve in $\mathbb P^3$ can be related to the height of its corresponding point in the Hilbert scheme. Then we can give an upper bound of the number of conics of bounded height. 

At the same time, for a non-ruled cubic surface in $\mathbb A^3$ with the absolutely irreducible homogeneous degree $3$ part, we can bound the homogeneous degree $3$ part of its Cayley form from the Deligne pairing argument mentioned above. This fact is useful to control the contribution of conics to the integral points in this cubic surface. 
\subsection{Organization of the article}
In \S \ref{chap.2}, a global determinant method which deals with the varieties which are hypersurfaces in a linear variety of the base space is built. In \S \ref{Chap. Cayley form}, we study some geometric and arithmetic properties of Chow forms and Cayley forms. In \S \ref{chap. 4}, we give a control of the auxiliary hypersurfaces which cover the integral points of bounded height with the help of Cayley forms. In \S \ref{chap.5}, we provide some useful preliminaries about the geometric of cubic surfaces and Deligne pairing of Arakelov geometry. In \S \ref{chap.6}, we give a control of rational and integral points lying in conics of a cubic surface. 
\subsection*{Acknowledgement}
I am deeply grateful to Prof. Per Salberger for introducing me to his brilliant work \cite{Salberger_preprint2013} and explaining key aspects of it, especially for highlighting the important role of height functions on Hilbert schemes in \cite{Salberger2008,Salberger_preprint2013}. I wish to thank Prof. Huayi Chen and Prof. Xiaowen Hu for insightful discussions on key technical points and for their guidance on the literature, Prof. Junyi Xie for suggesting me several useful ideas to overcome some technical obstructions, and Prof. Xinyi Yuan for reminding me the key ingredients in his brilliant collaboration work \cite{YuanZhang2023} with Prof. Shou-Wu Zhang. Parts of this work were completed during visits to the Beijing International Center for Mathematical Research at Peking University and the Institute for Theoretical Sciences at Westlake University. I thank both institutions for their hospitality and support.

\section{Construction of determinants}\label{chap.2}
In this section, we build a particular version of determinant method with the formulation of Arakelov geometry. This is an immediate  generalization of the construction in \cite{Liu2022d}, where we will apply the uniform lower bound of arithmetic Hilbert--Samuel function in \cite{Liu2024b}. 
\subsection{Height of rational point}
We begin by recalling some foundations of algebraic number theory and defining the height function. We refer the readers to \cite[Part B]{Hindry} for a systematic introduction to this subject. 
\subsubsection{}
Let $K$ be a number field, and $\O_K$ be the ring of integers of $K$. We denote by $M_{K,\infty}$ the set of infinite places of $K$, by $M_{K,f}$ the set of finite places of $K$, and by $M_K=M_{K,f}\sqcup M_{K,\infty}$ the set of places of $K$.

For each $v\in M_K$ and $x\in K$, we define the absolute value \[|x|_v=\left|N_{K_v/\mathbb Q_v}(x)\right|_v^{1/[K_v:\mathbb Q_v]},\] which extends the usual absolute values on $\mathbb Q_p$ or $\mathbb R$. Here, $\mathbb Q_v$ denotes the usual $p$-adic field $\mathbb Q_p$ if $v\in M_{K,f}$ lies above the prime number $p$ under the extension $K/\mathbb Q$.

In this setting, the product formula (cf. \cite[Chap. III, (1.3) Proposition]{Neukirch})
\[\prod_{v\in M_K}|x|_v^{[K_v:\mathbb Q_v]}=1\]
holds for all $x\in K^{\times}$.
\subsubsection{}
Let $\xi=[\xi_1:\cdots:\xi_n]\in\mathbb P^n_K(K)$. We define the \textit{height} of $\xi$ in $\mathbb P^n_K$ as
\begin{equation}\label{definition of naive heigth of rational point}
H_K(\xi)=\prod_{v\in M_K}\max_{i\in\{0,\ldots,n\}}\left\{|\xi_i|_v\right\}^{[K_v:\mathbb Q_v]}.
\end{equation}
We also define the \textit{logarithmic height} of $\xi$ as
\begin{equation}\label{log naive height}
h(\xi)=\frac{\log H_K(\xi)}{[K:\mathbb Q]},\end{equation}
which is invariant under the extensions of $K$ (cf. \cite[Lemma B.2.1]{Hindry}).
\subsubsection{}
Let $X$ be a subscheme of $\mathbb P^n_K$, and $\phi:X\hookrightarrow\mathbb P^n_K$ be the embedding morphism. For a $\xi\in X(K)$, we define $H_K(\xi)=H_K(\phi(\xi))$, and we will omit the embedding $\phi$ if there is no confusion. We denote
\[S(X;B)=\left\{\xi\in X(K)\mid H_K(\xi)\leqslant B\right\}.\]
By the Northcott property (cf. \cite[Theorem B.2.3]{Hindry}), the cardinality $\#S(X;B)$ is finite for a fixed $B\in\mathbb R$.
\subsection{Foundations of Arakelov geometry}
In this part, we introduce some useful notions for formulating the subject within Arakelov geometry. 
\subsubsection{}
A \textit{normed vector bundle} on $\spec\O_K$ is all the pairing $\overline E=\left(E,(\|\ndot\|_v)_{v\in M_{K,\infty}}\right)$, where:
\begin{itemize}
\item $E$ is a projective $\O_K$-module of finite rank;
\item $(\|\ndot\|_v)_{v\in M_{K,\infty}}$ is a family of norms on $E_v=E\otimes_{\O_K,v}\mathbb C$ for each $v\in M_{K,\infty}$, which is invariant under the action of $\gal(\mathbb C/K_v)$.
\end{itemize}
For a complex place, we consider it and its conjugation as two different places.

By the \textit{rank} of a normed vector bundle $\E$, we mean the rank of $E$, denoted by $\rg_{\O_K}(E)$ or simply $\rg(E)$ if there is no ambiguity.

If for every $v\in M_{K,\infty}$, $\|\ndot\|_v$ is Hermitian, which means it is induced by an inner product, we say that $\overline E$ is a \textit{Hermitian vector bundle} on $\spec\O_K$. If $\rg_{\O_K}(E)=1$, we say that $\E$ is a \textit{Hermitian line bundle}.
\subsubsection{}
Let $\E=(E,(\|\ndot\|_{E,v})_{v\in M_{K,\infty}})$ and $\F=(F,(\|\ndot\|_{F,v})_{v\in M_{K,\infty}})$ be two normed vector bundles on $\spec\O_K$. We say that $\F$ is a normed subbundle of $\E$ if $F$ is a sub-$\O_K$-module of $E$ and $\|\ndot\|_{F,v}$ is the restriction of $\|\ndot\|_{E,v}$ to $F$ for each $v\in M_{K,\infty}$.

We say that $\G=(G,(\|\ndot\|_{G,v})_{v\in M_{K,\infty}})$ is a normed quotient bundle of $\E$ on $\spec\O_K$, if $G$ is a quotient $\O_K$-module of $E$, and $\|\ndot\|_{G,v}$ is a quotient norm of $\|\ndot\|_{E,v}$ for all $v\in M_{K,\infty}$.

We denote by $E_K=E\otimes_{\O_K}K$ for simplicity.
\subsubsection{}
Let $\E$ be a Hermitian vector bundle on $\spec\O_K$, and $s_1,\ldots,s_r$ be a $K$-basis of $E_K$. The \textit{Arakelov degree} of $\E$ defined as
\begin{eqnarray*}
\adeg(\E)&=&-\sum_{v\in M_K}[K_v:\Q_v]\|s_1\wedge\cdots\wedge s_r\|_v\\
&=&\log\left(\#E/(\O_Ks_1+\cdots+\O_Ks_r)\right)\\
& &-\frac{1}{2}\sum_{v\in M_{K,\infty}}[K_v:\Q_v]\log\det(\langle s_i,s_j\rangle_{v,1\leqslant i,j\leqslant r}),
\end{eqnarray*}
where $\|s_1\wedge\cdots\wedge s_r\|_v$ follows the definition \cite[\S 2.1.3]{Chen10b} for $v\in M_{K,\infty}$, and $\langle s_i,s_j\rangle_{v,1\leqslant i,j\leqslant r}$ is the Gram matrix of $\{s_1,\ldots,s_r\}$ with respect to $v\in M_{K,\infty}$.

For $v\in M_{K,f}$, we define $\|\ndot\|_v$ as the norm given by model. More precisely, let $\overline E$ be a normed vector bundle on $\spec\O_K$, and $x\in E\otimes_{\O_K}K_v$. We define
\[\|x\|_v=\inf\{|a|_v\mid a\in K_v^\times,\;a^{-1}x\in E\otimes_{\O_K}\widehat{\O}_{K,v}\}.\]
If $x\in E_{\O_K}K$, we have 
\[\|x\|_v=\inf\{|a|_v\mid a\in K^\times,\;a^{-1}x\in E\}.\]

We refer the readers to \cite[2.4.1]{Gillet-Soule91} for the proof of the above equalities. The Arakelov degree is independent of the choice of the basis $\{s_1,\ldots,s_r\}$ by the product formula.

We define \[\adeg_{\mathrm n}(\E)=\frac{\adeg(\E)}{[K:\mathbb Q]}\] as the \textit{normalized Arakelov degree} of $\E$, which is independent of the choice of the base field $K$.
\subsubsection{}
Let $\E$ be a non-zero Hermitian vector bundle on $\spec\O_K$. The \textit{slope} of $\E$ is defined as
\[\wmu(\E)=\frac{\adeg_{\mathrm n}(\E)}{\rg(E)}.\]
In addition, we denote by $\wmu_{\max}(\E)$ the maximal value of slopes of all non-zero Hermitian subbundles, and by $\wmu_{\min}(\E)$ the minimal value of slopes of all non-zero Hermitian quotient bundles.

\subsection{Height functions with the formulation of Arakelov geometry}
In this part we will formulate the height function of a closed subscheme in a projective space by Arakelov geometry.
\subsubsection{}
Let $\overline{\sE}$ be a Hermitian vector bundle on $\spec\O_K$ of rank $n+1$, and $\mathbb P(\sE)$ be the projective space which represents the functor from the category of commutative $\O_K$-algebras to the category of sets mapping all $\O_K$-algebra $A$ to the set of projective quotient $A$-module of $\sE\otimes_{\O_K}A$ of rank $1$.

For a $D\in\mathbb N$, let $\O_{\mathbb P(\sE)}(1)$ (or $\O(1)$ if there is no ambiguity) be the universal bundle on $\mathbb P(\sE)$, and $\O_{\mathbb P(\sE)}(D)$ or $\O(D)$ be the tensor product $\O_{\mathbb P(\sE)}(1)^{\otimes D}$. For each $v\in M_{K,\infty}$, the Hermitian norm on $\overline{\sE}$ induces the corresponding Fubini--Study metric on $\O(1)$, which defines the Hermitian line bundle $\overline{\O_{\mathbb P(\sE)}(1)}$ (or $\overline{\O(1)}$) on $\mathbb P(\sE)$.
\subsubsection{}
Let $P\in\mathbb P(\sE_K)(K)$, and $\mathcal P\in \mathbb P(\sE)(\O_K)$ be the Zariski closure of $P$ in $\mathbb P(\sE)$. For a Hermitian line bundle on $\overline{\mathcal L}$ on $\mathbb P(\sE)$, we define the \textit{Arakelov height} of $P$ with respect to $\overline{\mathcal L}$ as
\begin{equation}\label{general arakelov height of point}
h_{\overline{\mathcal L}}(P)=\adeg_{\mathrm n}\left(\mathcal P^*\overline{\mathcal L}\right).
\end{equation}
\subsubsection{}\label{O{n+1} with l2-norm}
We consider a special case below. Let
\[\overline{\sE}=\left(\O_K^{\oplus(n+1)},(\|\ndot\|_v)_{v\in M_{K,\infty}}\right),\]
where
\[\begin{array}{rrcl}
\|\ndot\|_v:&K^{\oplus(n+1)}&\longrightarrow&\mathbb R\\
&(z_0,\ldots,z_n)&\mapsto&\sqrt{|v(z_0)|^2+\cdots+|v(z_n)|^2}
\end{array}\]
for every $v\in M_{K,\infty}$. In this case, if $P=[x_0:\cdots:x_n]\in \mathbb P(\sE_K)$, then we have (cf. \cite[Proposition 9.10]{Moriwaki-book})
\begin{eqnarray}\label{arakelov height of point}
h_{\overline{\O(1)}}(P)&=&\sum_{v\in M_{K,f}}\frac{[K_v:\Q_v]}{[K:\Q]}\log\left(\max_{0\leqslant i\leqslant n}\{|x_i|\}\right)\\
& &+\frac{1}{2}\sum_{v\in M_{K,\infty}}\frac{[K_v:\Q_v]}{[K:\Q]}\log\left(|v(x_0)|^2+\cdots+|v(x_n)|^2\right).\nonumber
\end{eqnarray}

Let $h(\ndot)$ be the height defined at \eqref{log naive height} on $\mathbb P^n_K$, and $\overline{\sE}$ on $\spec\O_K$ be the same as above. Then for an arbitrary $P\in \mathbb P(\sE_K)(K)$, we have
\[\left|h(P)-h_{\overline{\O(1)}}(P)\right|\leqslant\frac{1}{2}\log(n+1)\]
by an elementary calculation. This estimate ensures that it is reasonable to involve the Arakelov geometry in the study of counting rational points of bounded height.
\subsubsection{}\label{def of Arakelov height}
Let $\overline{\mathcal L}$ be a Hermitian vector bundle on $\mathbb P(\sE)$ over $\spec\O_K$. Let $X$ be a closed subscheme of $\mathbb P(\sE_K)$ of pure dimension $d$, and $\mathscr X$ be its Zariski closure in $\mathbb P(\sE)$. The \textit{Arakelov height} of $X$ is defined as the arithmetic intersection number
\[h_{\overline{\mathcal L}}(X)=h_{\overline{\mathcal L}}(\mathscr X)=\frac{1}{[K:\Q]}\adeg_{\mathrm n}\left(\widehat{c}_1(\overline{\mathcal L})^{d+1}\cdot[\mathscr X]\right),\]
where $\widehat{c}_1(\overline{\mathcal L})$ is the first arithmetic Chern class of $\overline{\mathcal L}$. We refer the readers to \cite[Chap. III.4, Proposition 1]{Soule92} for its precise definition.

When $X$ is a simple point, $h_{\overline{\mathcal L}}(X)$ is just the height defined at \eqref{general arakelov height of point}, where $\overline{\O(1)}$ is replaced by an arbitrary Hermitian line bundle on $\mathbb P(\sE)$.
\subsubsection{}
We consider a more special case. Let $X$ be a hypersurface in $\mathbb P(\sE_K)$ of degree $\delta$. By \cite[Chap. 1, Proposition 7.6 (d)]{GTM52}, $X$ is defined by a homogeneous polynomial of degree $\delta$. We suppose that the above $X$ is defined by the polynomial
\[f(T_0,\ldots,T_n)=\sum_{i_0+\cdots+i_n=\delta}a_{i_0\ldots i_n}T_0^{i_0}\cdots T_n^{i_n}, \]
then we define the \textit{naive height} of $X$ as
\begin{equation}\label{naive height of a polynomial or hypersurface}
h(X)=h(f)=\sum_{v\in M_K}\frac{[K_v:\Q_v]}{[K:\mathbb Q]}\log\max_{i_0+\cdots+i_n=\delta}\{|a_{i_0\ldots i_n}|_v\}.\end{equation}

In addition, if $\overline{\sE}$ is defined as above, then we have
\begin{eqnarray}
-\frac{1}{2}\left(\log((n+1)(\delta+1))+\delta\mathcal H_n\right)&\leqslant& h(X)-h_{\overline{\O(1)}}(X)\\
&\leqslant&(n+1)\delta\log2+4\delta\log(n+1)-\frac{1}{2}\delta\mathcal H_n,\nonumber
\end{eqnarray}
where $\mathcal H_n=1+\cdots+\frac{1}{n}$. This is a special case of Proposition \ref{compare arakelov height and chow form height} below, which is deduced from \cite[Proposition 3.7]{Liu-reduced} directly.
\subsection{Construction of determinants}
In this part, we provide the foundations of Arakelov geometry to build the determinant method, which is original from \cite{Chen1,Chen2}. 
\subsubsection{}\label{definition of John norm on E_D}
Let $\overline{\sE}$ be the same Hermitian vector bundle on $\spec\O_K$ of rank $n+1$. For every $D\in\mathbb N$, we denote by $E_D=H^0(\mathbb P(\sE),\O(D))$, and $r(n,D)=\rg_{\O_K}(E_D)$. For each $v\in M_{K,\infty}$, we define the norm $\|\ndot\|_{v,\sup}$ on $E_D\otimes_{\O_K,v}\mathbb C$ as
\begin{equation}\forall\; s\in E_{D,v},\ \|s\|_{v,\sup}=\sup_{x\in\mathbb P(\sE_{K_v})(\C)}\|s(x)\|_{v,\mathrm{FS}},\end{equation}
where $\|\ndot\|_{v,\mathrm{FS}}$ is the Fubini--Study metric on $\O(D)$.

Now we recall the following fact (cf. \cite[Theorem 3.3.6]{Thompson96}): for every symmetric convex body $C$, there exists a unique ellipsoid, whose volume is maximal contained in $C$. This ellipsoid is called the \textit{ellipsoid of John} of $C$.

For the $\O_K$-module $E_D$ and a place $v\in M_{K,\infty}$, we take the ellipsoid of John of the unit closed ball with respect to $\|\ndot\|_{v,\sup}$. This ellipsoid of John induces a Hermitian norm denoted by $\|\ndot\|_{v,J}$. In fact, we have
\[\|s\|_{v,\sup}\leqslant\|s\|_{v,J}\leqslant\sqrt{r(n,D)}\|s\|_{v,\sup}\]
for every $s\in E_D$.
\subsubsection{}
Let $X$ be a closed integral subscheme of $\mathbb P(\sE_K)$, and $\mathscr X$ be its Zariski closure in $\mathbb P(\sE)$. We define
\begin{equation}\label{evaluation map of eta_D}
\eta_{D,K}:\;E_{D,K}\longrightarrow H^0\left(X,\O_{\mathbb P(\sE_K)}(1)|_X^{\otimes D}\right)
\end{equation}
the \textit{evaluation map} induced by the closed immersion of $X$ in $\mathbb P(\sE_K)$.

We denote by $F_{D}$ the largest saturated sub-$\O_K$-module of $H^0\left(\mathscr X,\O_{\mathbb P(\sE)}(1)|_X^{\otimes D}\right)$ such that $F_{D,K}=\im(\eta_{D,K})$.
\begin{rema}
We keep all the notations above. When $D$ is large enough, we have $F_D=H^0\left(\mathscr X,\O_{\mathbb P(\sE)}(1)|_{\mathscr X}^{\otimes D}\right)$.
\end{rema}
We denote by $\F_{D,J}$ the Hermitian vector bundle on $\spec\O_K$, which is equipped with the quotient norm of $\|\ndot\|_{v,J}$ on $E_D$ defined in \S \ref{definition of John norm on E_D} for each $v\in M_{K,\infty}$.
\subsubsection{}
We consider the function
\[\begin{array}{rcl}
\mathbb N^+&\longrightarrow&\mathbb R\\
D&\mapsto&\adeg_{\mathrm n}(\F_{D,J})\text{ or }\wmu(\F_{D,J}),
\end{array}\]
and we define it as the \textit{arithmetic Hilbert--Samuel function} of $X$ with respect to the line bundle $\overline{\O_{\mathbb P(\sE)}(1)}|_{\mathscr X}$.

In \cite[\S 5.2.5]{Liu2024b}, we studied the uniform lower bound of $\wmu(F_{D,J})$ for the closed subschemes of $\mathbb P(\sE_K)$ satisfying certain conditions. More precisely, let $\overline{\sE}$ be a Hermitian vector bundle of rank $n+1$ on $\spec\O_K$, and $X$ be a closed subscheme of $\mathbb P(\sE_K)$ of pure dimension $\delta$ and degree $\delta$, which is a hypersurface in a linear subvariety of $\mathbb P(\sE_K)$. In this case, we have
\[\frac{\wmu(\F_{D,J})}{D}\geqslant\frac{h_{\overline{\O(1)}}(X)}{(d+1)\delta}+B_0(d+1)-\frac{1}{2}+\left(1+\frac{1}{(d+1)\delta}\right)\wmu_{\min}(\overline{\sE})\]
when $D\geqslant\delta$, where $B_0(d+1)$ is an explicit constant depending only on $d$, and see \cite[\S 5.2.5]{Liu2024b} for its precise definition. We omit its precise representation here for it is too complicated. 

We choose $\overline{\sE}$ as that defined in \S \ref{O{n+1} with l2-norm}, and then we have \[\wmu_{\min}(\overline{\sE})=-\frac{1}{2}\log(n+1).\] Then there exists an explicit constant $B_1(d)$ depending only on $d$, such that
\begin{equation}\label{lower bound of arithmetic Hilbert-Samuel}
\frac{\wmu(\F_{D,J})}{D}\geqslant\frac{h_{\overline{\O(1)}}(X)}{(d+1)\delta}+B_1(d)-\frac{3}{4}\log(n+1)
\end{equation}
is verified for all $X$ the conditions above in this part when $D\geqslant\delta$. In fact, we have 
\[B_1(d)=B_0(d+1)-\frac{1}{2}.\]
\subsection{The global determinant method of Arakelov formulation: a modification d'apr\`es \cite{Liu2022d}}
In the global determinant method developed by Salberger \cite{Salberger_preprint2013}, the generalized versions \cite{CCDN2020,ParedesSasyk2022}, and the Arakelov formulation \cite{Liu2022d}, only the geometrically integral hypersurfaces in a projective space can be treated. For our further application, we will consider the projective variety which is a hypersurface in a linear subvariety of the base projective space. In other words, let $X$ be a hypersurface in $\mathbb P^m$, and $\mathbb P^m\hookrightarrow\mathbb P^n$ is an embedding such that the image of $\mathbb P^m$ is a linear subvariety of dimension $m$ in $\mathbb P^n$. We build the global determinant method for this kind of $X$ in $\mathbb P^n$.

\subsubsection{}\label{def of Q(r)}
Let $k$ be a field, and $X$ be a closed subvariety in $\mathbb P^n_k$ which is a hypersurface in a linear subvariety of $\mathbb P^n_k$. Let $\xi\in X(k)$, and we consider the local ring $(\O_{X,\xi},\mathfrak m_{X,\xi},\kappa(\xi))$, where $\mathfrak m_{X,\xi}$ is the maximal ideal of $\O_{X,\xi}$ and $\kappa(\xi)$ is the residue field of $\O_{X,\xi}$.

We consider the function
\[H_\xi(s)=\dim_{\kappa(\xi)}\left(\mathfrak m_{X,\xi}^s/\mathfrak m_{X,\xi}^{s+1}\right)\]
of the variable $s\in\mathbb N$, which is called the \textit{local Hilbert--Samuel function} of $\xi$ in $X$.

In fact, we have
\[H_\xi(s)=\frac{\mu_\xi(X)}{(d-1)!}s^{d-1}+O(s^{d-2}),\]
where $\mu_\xi(X)\in\mathbb N_{\geqslant1}$. We call the integer $\mu_\xi(X)$ the \textit{multiplicity} of $\xi$ in $X$.

In $X$ is a hypersurface in a linear subvariety of $\mathbb P^n_k$, then for every $\xi\in X$, its local Hilbert--Samuel function is the same as that considered $X$ as a hypersurface in $\mathbb P^{d+1}_k$. Then in this case, we have
\[H_\xi(s)={d+s\choose s}-{d+s-\mu_\xi(X)\choose s-\mu_\xi(X)}\]
by \cite[Example 2.70 (2)]{Kollar2007}.

We define the series $\{q_\xi(m)\}_{m>0}$ as the increasing series of non-negative integers such that every integer $s\in\mathbb N$ appears exactly $H_\xi(s)$ times in this series, and $\{Q_\xi(m)\}_{m\geqslant 0}$ as the partial sum of $\{q_\xi(m)\}_{m>0}$. 

By \cite[Proposition A.1]{Liu2022d}, we have the explicit lower bound
\begin{equation}Q_\xi(m)>\left(\frac{d!}{\mu_\xi(X)}\right)^{\frac{1}{d}}\left(\frac{d}{d+1}\right)m^{\frac{d+1}{d}}-\frac{d^3+5d^2+8d}{2(d+1)(d+2)}m\end{equation}
for the case described above.
\subsubsection{}
We have the following immediate generalization of \cite[Theorem 3.1]{Liu2022d}, where we consider the hypersurfaces in linear spaces of a projective space. 
\begin{theo}\label{existence of the determinant}
Let $K$ be a number field, and $\O_K$ be its ring of integers. Let $\overline{\sE}$ be a Hermitian vector bundle on $\spec\O_K$, $X$ be an integral closed subscheme in $\mathbb P(\sE_K)$ which is a hypersurface in a linear subvariety of $\mathbb P(\sE_K)$, and $\mathscr X$ be the Zariski closure of $X$ in $\mathbb P(\sE)$. Let $\{\p_j\}_{j\in J}$ be a finite set of maximal ideals of $\O_K$, and $\{P_i\}_{i\in I}$ be a family of rational points in $X$. For a fixed prime ideal $\p$, let $\mu_\xi(\mathscr X_\p)$ be the multiplicity of the point $\xi$ in $\mathscr X_\p$, and we denote $n(\mathscr X_\p)=\sum\limits_{\xi\in\mathscr X(\f_\p)}\mu_\xi(\mathscr X_\p)$. We suppose $d=\dim(X)$. If the inequality
\begin{eqnarray*}
\sup_{i\in I}h(P_i)&<&\frac{\wmu(\F_D)}{D}-\frac{\log\rg(F_D)}{2D}\\
& &+\frac{1}{[K:\Q]}\sum_{j\in J}\left(\frac{(d!)^{\frac{1}{d}}d\rg(F_D)^{\frac{1}{d}}}{(d+1)Dn(\mathscr X_{\p_j})}-\frac{d^3+5d^2+8d}{2D(d+1)(d+2)}\right)\log N(\p)
\end{eqnarray*}
is valid, then there exists a section $s\in E_{D,K}$, which contains the set $\{P_i\}_{i\in I}$ but does not contain the generic point of $X$. In other words, $\{P_i\}_{i\in I}$ can be covered by a hypersurface of degree $D$ in $\mathbb P(\sE_K)$ which does not contain the generic point of $X$.
\end{theo}
\begin{rema}
Compared with the proof of \cite[Theorem 3.1]{Liu2022d}, we have an almost same application of the invariants which applied in the proof of Theorem \ref{existence of the determinant}. In fact, the only difference is the lower bound of $Q_\xi(r)$ provided in \S \ref{def of Q(r)}, though it is essentially same that in \cite[Proposition A.1]{Liu2022d}. So we do not provide the details of the proof of Theorem \ref{existence of the determinant}, and \cite[\S 3]{Liu2022d} is detailed enough. 
\end{rema}
\subsection{Some quantitative estimates}
In order to apply Theorem \ref{existence of the determinant} to control the auxiliary hypersurface in that case, we need the estimates of some arithmetic and geometric quantities. The same application has been applied in \cite[\S 4]{Liu2022d} original from \cite{Salberger_preprint2013,CCDN2020}, and see also \cite{ParedesSasyk2022}.
\subsubsection{}
Let $k$ be a field, and $X$ be a closed subvariety in $\mathbb P^n_k$ which is a hypersurface in a linear subvariety of $\mathbb P^n_k$. We suppose $\dim(X)=d$ and $\deg(X)=\delta$, then we have the embedding
\[X\hookrightarrow\mathbb P^{d+1}_k\hookrightarrow\mathbb P^n_k,\]
where the image of $\mathbb P^{d+1}_k$ in $\mathbb P^n_k$ is a linear subvariety. In this case, we have $\O_{\mathbb P^n_k}(1)|_X=\O_{\mathbb P^{d+1}_k}(1)|_X$. Then for every $D\in\mathbb N$, we have $H^0(X,\O_{\mathbb P^n_k}(1)|_X^{\otimes D})=H^0(X,\O_{\mathbb P^{d+1}_k}(1)|_X^{\otimes D})$. So we have
\[\rg(F_D)={d+1+D\choose d+1}-{d+1-\delta+D\choose d+1},\]
which is from the result for usual projective hypersurfaces directly. 

We have the following explicit estimates of $\rg(F_D)$ below, which are obtained from the same calculation as \cite[Lemma 4.6]{Liu2022d} directly. We omit the detailed calculation here. 
\begin{lemm}\label{explicit bounds of geometric Hilbert-Samuel function}
Let $k$ be a field, and $X$ be a closed subscheme of $\mathbb P^n_k$ of dimension $d$ and degree $\delta$, which is a hypersurface in a linear subvariety of $\mathbb P^n_k$. Let $\rg(F_D)$ be the geometric Hilbert--Samuel function of $X$ with the variable $D\in\mathbb N$. When $D\geqslant\delta$, we have
\[\rg(F_D)^{\frac{1}{d}}\geqslant\sqrt[d]{\frac{\delta}{d!}}D-(\delta-2)\sqrt[d]{\frac{\delta}{d!}},\]
and
\[\rg(F_D)^{\frac{1}{d}}\leqslant\sqrt[d]{\frac{\delta}{d!}}D+\frac{d+1}{2}\sqrt[d]{\frac{\delta}{d!}}.\]
\end{lemm}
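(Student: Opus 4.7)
The plan is to reduce everything to the explicit formula
\[\rg(F_D)=\binom{D+d+1}{d+1}-\binom{D+d+1-\delta}{d+1}\]
established just before the lemma, expand it as a convenient sum of $\delta$ binomial coefficients, and then bound each summand by a short factor-by-factor estimate.

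First, by iterated use of Pascal's identity $\binom{m}{d+1}-\binom{m-1}{d+1}=\binom{m-1}{d}$, I would rewrite the right-hand side as the telescoping sum
\[\rg(F_D)=\sum_{j=0}^{\delta-1}\binom{D+d-j}{d}=\sum_{j=0}^{\delta-1}\frac{(D+d-j)(D+d-j-1)\cdots(D-j+1)}{d!}.\]
Each summand is a product of $d$ consecutive positive integers (thanks to $D\geq\delta$), with smallest factor $D-j+1$ and arithmetic mean $D-j+(d+1)/2$. This displays $\rg(F_D)$ as a polynomial of degree $d$ in $D$ with leading coefficient $\delta/d!$, which matches the asymptotic of both target bounds.

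For the lower bound, I would observe that since $j\leq\delta-1$, every factor in the product above is at least $D-\delta+2$, hence $\binom{D+d-j}{d}\geq(D-\delta+2)^d/d!$ for each $j$. Summing the $\delta$ terms and taking $d$-th roots yields
\[\rg(F_D)^{1/d}\geq\sqrt[d]{\frac{\delta}{d!}}\,(D-\delta+2)=\sqrt[d]{\frac{\delta}{d!}}\,D-(\delta-2)\sqrt[d]{\frac{\delta}{d!}},\]
which is the first inequality claimed.

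For the upper bound, I would apply the AM--GM inequality to the $d$ consecutive positive factors inside each $\binom{D+d-j}{d}$, obtaining $\binom{D+d-j}{d}\leq(D-j+(d+1)/2)^d/d!$. Since $D-j+(d+1)/2\leq D+(d+1)/2$ whenever $j\geq 0$, summing over $j$ gives $\rg(F_D)\leq\frac{\delta}{d!}(D+(d+1)/2)^d$, and taking $d$-th roots yields the second inequality. The argument is a short combinatorial computation; there is no serious obstacle. The hypothesis $D\geq\delta$ is used only to guarantee that all linear factors are positive (so that the factor-wise bounds and AM--GM are applicable) and that the telescoping identity is valid.
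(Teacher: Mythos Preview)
Your argument is correct. The paper itself does not give a proof of this lemma at all: it simply states that the bounds ``are obtained from the same calculation as \cite[Lemma 4.6]{Liu2022d} directly'' and omits the details. Your telescoping expansion via Pascal's rule together with the factor-wise lower bound and the AM--GM upper bound is exactly the kind of short combinatorial computation that yields these precise constants, and it is almost certainly what the cited lemma does as well (the constants $\delta-2$ and $(d+1)/2$ fall out naturally from your two estimates). So your proposal is a complete and faithful reconstruction of the intended proof.
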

\subsubsection{}
Let $\f_q$ be the finite field of $q$ elements, $X$ be a closed subscheme of $\mathbb P^n_{\f_q}$ of dimension $d$ and degree $\delta$, which is a hypersurface in a linear subvariety of $\mathbb P^n_{\f_q}$. In order to estimate $n(\mathscr X_{\f_q})=\sum\limits_{\xi\in X(\f_q)}\mu_\xi(X)$, the multiplicity of $\mu_\xi(X)$ does not depend on the closed embedding. Then the method to estimate $n(\mathscr X_{\f_q})$ from \cite{cafure2006improved,Liu-multiplicity} in \cite[\S 4.2.1]{Liu2022d} is still valid. In fact, when $\q\leqslant\delta^2$ or $\delta\geqslant27\delta^4$, we have
\begin{equation}\label{estimate of sqrt[n-1]{n(X)}}
\frac{1}{n(\mathscr X_{\f_q})^{\frac{1}{d}}}\geqslant\frac{1}{q}-\frac{(d+1)^2\delta^2}{\max\{q,\delta-1\}^{\frac{3}{2}}}
\end{equation}
by the same calculation. 
\subsubsection{}
Let $\a$ be a proper ideal of $\O_K$, $\p\in\spm\O_K$, and $N(\a)=\#(\O_K/\a)$. Then we have
\[\frac{1}{[K:\Q]}\sum_{\p\supseteq\a}\frac{\log N(\p)}{N(\p)}\leqslant\log\log(N(\a))+2\]
in \cite[Lemma 4.3]{Liu2022d}, which is a generalization of \cite[Lemma 1.10]{Salberger_preprint2013}, and see also \cite[Lemma 2.6]{ParedesSasyk2022}.
\subsubsection{}
Let $x\in\mathbb R^+$, $\p\in\spm\O_K$, and $N(\p)=\#(\O_K/\p)$. For the estimate of
\[\theta_K(x)=\sum_{N(\p)\leqslant x}\log N(\p),\;\psi_K(x)=\sum_{N(\p)\leqslant x}\frac{\log N(\p)}{N(\p)},\; \phi_K(x)=\sum_{N(\p)\leqslant x}\frac{\log N(\p)}{N(\p)^{\frac{3}{2}}},\]
we have the properties below, which are obtained in \cite[\S 4.3.2]{Liu2022d} by applying \cite{Rosen1999}.

First, we have
\begin{equation}\label{implicit estimate of theta_K}
\left|\theta_K(x)-x\right|\leqslant\epsilon_1(K,x),
\end{equation}
where the constant $\epsilon_1(K,x)=O_K(xe^{-c\sqrt{\log x}})$ for all $x\in\mathbb R^+$, and the constant $c$ depends on $K$ only.

Next, we have
\begin{equation}\label{estimate of logx/x}
\left|\psi_K(x)-\log x\right|\leqslant\epsilon_2(K),\end{equation}
where $\epsilon_2(K)$ is a constant depending only on $K$.

We also have
\begin{equation}\label{implicit estimate of phi_K}
\left|\phi_K(x)-\frac{3}{2}\sqrt{2}+\frac{2}{\sqrt{x}}\right|\leqslant \epsilon_3(K,x),
\end{equation}
where
\[\epsilon_3(K,x)=\frac{\epsilon_1(K,x)}{x^{\frac{3}{2}}}+\frac{3}{2}\int_2^x\frac{\epsilon_1(K,t)}{t^{\frac{5}{2}}}\mathrm d t\]
for all $x\in\mathbb R^+$. 
\begin{rema}
In \cite[\S 7]{Liu2022d}, we obtained explicit estimates of $\epsilon_1(K,x)$, $\epsilon_2(K)$ and $\epsilon_3(K,x)$ by assuming the Generalized Riemann Hypothesis, which are from an application of \cite{grenie2016explicit}. These calculations are still valid under the same assumption. 
\end{rema}
\subsubsection{}
In this part, we consider a particular kind of non-geometrically integral reductions. Let $\overline{\sE}$ be the Hermitian vector defined at \S \ref{O{n+1} with l2-norm} on $\spec\O_K$, $X$ be a geometrically integral closed subscheme of $\mathbb P(\sE_K)$, and $\mathscr X$ be its Zariski closure in $\mathbb P(\sE)$. 

We consider the base change
\[\xymatrix{\mathscr X_{\f_\p}\ar[d]\ar[r]\ar@{}[dr]|{\square}&\spec\f_\p\ar[d]\\\mathscr X\ar[r]&\mathscr\spec\O_K}\]
for an arbitrary $\p\in\spm\O_K$. Let
\[\mathcal Q(\mathscr X)=\left\{\p\in\spm\O_K\mid \mathscr X_{\f_\p}\rightarrow\spec\f_\p\text{ is not geometrically integral}\right\},\]
which is a finite set by \cite[Th\'eor\`eme 9.7.7]{EGAIV_3}. Suppose the Hermitian bundle $\overline{\O(1)}=\overline{\O_{\mathbb P(\sE)}(1)}$ on $\mathbb P(\sE)$ is equipped with the corresponding Fubini--Study metrics for each $v\in M_{K,\infty}$. In this case, by \cite[Theorem 4.5]{Liu-non-geometricallyintegral} original from \cite{Ruppert1986}, we have
\begin{eqnarray}\label{non geometrically integral reduction for general case}
& &\frac{1}{[K:\Q]}\sum_{\p\in\mathcal Q(\mathscr X)}\log N(\p)\\
&\leqslant&(\delta^2-1)h_{\overline{\O(1)}}(X)+(\delta^2-1)\Big(3\log\delta +\log{N(n,d)+\delta\choose \delta}\nonumber \\
& &+\left((N(n,d)+1)\log2+4\log(N(n,d)+1)+\log3-\frac{1}{2}\mathcal H_{N(n,d)}\right)\delta\Big),\nonumber
\end{eqnarray}\label{upper bound of non geometrically integral reductions}
where $h_{\overline{\O(1)}}(X)$ is the height of $X$ defined in \S \ref{def of Arakelov height} by the arithmetic intersection theory, $N(\p)=\#(\O_K/\p)$, and the constant $N(n,d)={n+1\choose d+1}-1$, $\mathcal H_m=1+\cdots+\frac{1}{m}$.

Let
\begin{eqnarray*}\mathcal Q'(\mathscr X)&=&\Big\{\p\in\spm\O_K\mid N(\p)>27\delta^4\text{ and }\mathscr X_{\f_\p}\rightarrow\spec\f_\p\\
& &\text{ is not geometrically integral}\Big\},\end{eqnarray*}
and
\begin{equation}\label{def of b'(X)}
b'(\mathscr X)=\prod_{\p\in\mathcal Q'(\mathscr X)}\exp\left(\frac{\log N(\p)}{N(\p)}\right).
\end{equation}
We have the following upper bound of $b'(\mathscr X)$ by a similar calculation to that of \cite[Proposition 4.4]{Liu2022d} from \eqref{non geometrically integral reduction for general case}, and see also \cite[Corollary 3.2.3]{CCDN2020} and \cite[Lemma 3.23]{ParedesSasyk2022}.
\begin{prop}\label{estiamte of b'(x)}
Let $X$ be a closed subscheme of $\mathbb P(\sE_K)$ of pure dimension $d$ and degree $\delta$, and $\mathscr X$ be its Zariski closure in $\mathbb P(\sE)$. With all the above notations, we have
\begin{eqnarray*}
b'(\mathscr X)&\leqslant&\exp\left(2\epsilon_2(K)-3\log3+[K:\Q]\right)\\
& &\cdot(\delta^{-2}-\delta^{-4})\Big(h_{\overline{\O(1)}}(X)+\Big(3\log\delta +\log{N(n,d)+\delta\choose \delta}\\
& &+\left((N(n,d)+1)\log2+4\log(N(n,d)+1)+\log3-\frac{1}{2}\mathcal H_{N(n,d)}\right)\delta\Big)\Big),
\end{eqnarray*}
where $h_{\overline{\O(1)}}(X)$ is defined in \S \ref{def of Arakelov height} with respect to the Fubini--Study norm induced by those in \S \ref{O{n+1} with l2-norm}, $N(n,d)={n+1\choose d+1}-1$, $\mathcal H_m=1+\cdots+\frac{1}{m}$, and $\epsilon_2(K)$ depending only on $K$ is defined in \eqref{estimate of logx/x}.
\end{prop}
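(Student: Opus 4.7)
The plan is to take logarithms and bound
\[\log b'(\mathscr X) = \sum_{\p \in \mathcal Q'(\mathscr X)} \frac{\log N(\p)}{N(\p)}\]
by splitting $\mathcal Q'(\mathscr X)$ at an auxiliary threshold $M > 27\delta^4$ to be optimized at the end. Write $\Lambda(\mathscr X)$ for the height-plus-combinatorial quantity appearing in parentheses on the right-hand side of \eqref{non geometrically integral reduction for general case}, so that it reads
\[\sum_{\p \in \mathcal Q(\mathscr X)} \log N(\p) \leqslant [K:\Q](\delta^2 - 1)\Lambda(\mathscr X).\]

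For primes in $\mathcal Q'(\mathscr X)$ of norm at most $M$, I would apply the Chebyshev-type estimate \eqref{estimate of logx/x}:
\[\sum_{\p,\, 27\delta^4 < N(\p) \leqslant M} \frac{\log N(\p)}{N(\p)} \leqslant \psi_K(M) - \psi_K(27\delta^4) \leqslant \log\frac{M}{27\delta^4} + 2\epsilon_2(K),\]
which is the source of the $2\epsilon_2(K)$ and, via $\log 27 = 3\log 3$, of the $-3\log 3$ terms in the final bound. For primes of norm exceeding $M$, the trivial inequality $N(\p)^{-1} \leqslant M^{-1}$ combined with \eqref{non geometrically integral reduction for general case} yields
\[\sum_{\p,\, N(\p) > M} \frac{\log N(\p)}{N(\p)} \leqslant \frac{[K:\Q](\delta^2 - 1)}{M}\Lambda(\mathscr X).\]
Adding the two contributions gives
\[\log b'(\mathscr X) \leqslant \log\frac{M}{27\delta^4} + 2\epsilon_2(K) + \frac{[K:\Q](\delta^2 - 1)}{M}\Lambda(\mathscr X).\]

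Choosing $M = [K:\Q](\delta^2 - 1)\Lambda(\mathscr X)$ to balance the two $M$-dependent terms, the large-prime contribution becomes $1$ while the logarithm collapses to $\log([K:\Q](\delta^{-2} - \delta^{-4})\Lambda(\mathscr X)/27) + 2\epsilon_2(K) + 1$; the factor $\delta^{-2} - \delta^{-4} = (\delta^2-1)/\delta^4$ of the statement then drops out of the arithmetic, with $\delta^{-4}$ coming from the $27\delta^4$ threshold and $\delta^2-1$ from \eqref{non geometrically integral reduction for general case}. Exponentiating and absorbing the residual linear $[K:\Q]$ via the elementary inequality $[K:\Q] \leqslant \exp([K:\Q] - 1)$ converts the leftover $\log[K:\Q] + 1$ into $[K:\Q]$ inside the exponential, producing exactly the shape $\exp(2\epsilon_2(K) - 3\log 3 + [K:\Q]) \cdot (\delta^{-2} - \delta^{-4})\Lambda(\mathscr X)$ of the statement. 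The main obstacle is purely bookkeeping: checking that the optimized $M$ exceeds $27\delta^4$ in every relevant regime (otherwise the small-prime zone is empty and the bound is strictly easier), and tracking the constants precisely as in the proof of \cite[Proposition 4.4]{Liu2022d}, whose calculation this argument parallels. No further geometric input is needed, since \eqref{non geometrically integral reduction for general case} already incorporates the switch from ordinary projective hypersurfaces to hypersurfaces in a linear subvariety via the combinatorial constant $N(n,d) = {n+1 \choose d+1} - 1$.
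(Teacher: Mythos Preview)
Your proposal is correct and follows essentially the same approach as the paper: split $\log b'(\mathscr X)$ at a threshold, apply the $\psi_K$ estimate below it, and use $N(\p)^{-1}\leqslant M^{-1}$ together with \eqref{non geometrically integral reduction for general case} above it. The only cosmetic difference is the threshold itself: the paper takes $M=c'(\mathscr X)=(\delta^2-1)\Lambda(\mathscr X)$, so the large-prime tail contributes $[K:\Q]$ directly, whereas your choice $M=[K:\Q]c'(\mathscr X)$ makes the tail contribute $1$ and then requires the extra step $\log[K:\Q]+1\leqslant[K:\Q]$ to reach the stated constant; both routes land on the same bound.
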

\begin{proof}
We denote
\begin{eqnarray*}c'(\mathscr X)&=&(\delta^2-1)\Big(h_{\overline{\O(1)}}(X)+\Big(3\log\delta +\log{N(n,d)+\delta\choose \delta}\\
& &+\left((N(n,d)+1)\log2+4\log(N(n,d)+1)+\log3-\frac{1}{2}\mathcal H_{N(n,d)}\right)\delta\Big)\Big)\end{eqnarray*}
for simplicity. Then by \cite[Lemma 4.3]{Liu2022d} and \eqref{upper bound of non geometrically integral reductions}, we have
\begin{eqnarray*}
& &\frac{1}{[K:\Q]}\log b'(\mathscr X)=\frac{1}{[K:\Q]}\sum_{\p\in\mathcal Q'(\mathscr X)}\frac{\log N(\p)}{N(\p)}\\
&\leqslant&\frac{1}{[K:\Q]}\sum_{27\delta^4<N(\p)\leqslant c'(\mathscr X)}\frac{\log N(\p)}{N(\p)}+\frac{1}{[K:\Q]}\sum_{\begin{subarray}{c}\p\in\mathcal Q'(\mathscr X)\\N(\p)>c'(\mathscr X)\end{subarray}}\frac{\log N(\p)}{c'(\mathscr X)}.
\end{eqnarray*}
By \eqref{estimate of logx/x}, we have
\begin{eqnarray*}
& &\frac{1}{[K:\Q]}\sum_{27\delta^4<N(\p)\leqslant c'(\mathscr X)}\frac{\log N(\p)}{N(\p)}\\
&=&\frac{1}{[K:\Q]}\sum_{N(\p)\leqslant c'(\mathscr X)}\frac{\log N(\p)}{N(\p)}-\frac{1}{[K:\Q]}\sum_{N(\p)\leqslant 27\delta^4}\frac{\log N(\p)}{N(\p)}\\
&\leqslant&\frac{1}{[K:\Q]}\left(\log c'(\mathscr X)-4\log\delta+2\epsilon_2(K)-3\log3\right).
\end{eqnarray*}

Since $\mathcal Q'(\mathscr X)\subseteq\mathcal Q(\mathscr X)$, we have
\begin{equation*}
\frac{1}{[K:\Q]}\sum_{\begin{subarray}{c}\p\in\mathcal Q'(\mathscr X)\\N(\p)>c'(\mathscr X)\end{subarray}}\frac{\log N(\p)}{c'(\mathscr X)}\leqslant\frac{1}{[K:\Q]c'(\mathscr X)}\sum_{\p\in\mathcal Q(\mathscr X)}\log N(\p)\leqslant 1,
\end{equation*}
where the last inequality is due to \eqref{upper bound of non geometrically integral reductions}. By combining the above two inequalities, we obtain the assertion.
\end{proof}
\begin{rema}
With all the notations in Proposition \ref{estiamte of b'(x)}. We have
\[b'(\mathscr X)\ll_{n,K}\max\left\{\delta^{-2}h_{\overline{\O(1)}}(X),1\right\}.\]
\end{rema}
\subsection{Projective coverings}
Let $\overline{\sE}$ be the Hermitian vector defined at \S \ref{O{n+1} with l2-norm} on $\spec\O_K$, and $X$ be a closed subscheme in $\mathbb P(\sE_K)$, which is a hypersurface in a linear subvariety of $\mathbb P(\sE_K)$. Then geometry of such $X$ is same as that of a projective hypersurface, and we have a uniform lower bound of its arithmetic Hilbert--Samuel function with optimal dominant term in \cite{Liu2024b}. In the determinant method with the formulation of Arakelov geometry, we will construct an auxiliary hypersurface in $\mathbb P(\sE_K)$ which contains $S(X;B)$ but does not contain the generic point of $X$, where $B\in\mathbb R^+$.
\subsubsection{}
If $B$ is small enough compared with the height of $X$, then $S(X;B)$ is contained in a hypersurface of degree $O_n(\delta)$ in $\mathbb P(\sE_K)$ which does not contain the generic point of $X$. In order to give an accurate description, first we refer the following result, which is obtained by applying the slope inequalities of Arakelov geometry to \eqref{evaluation map of eta_D}.
\begin{prop}[\cite{Chen1}, Proposition 2.12]\label{evaluation map and slope inequalities}
Let $X$ be a closed integral subscheme of $\mathbb P(\sE_K)$, $Z=(P_i)_{i\in I}$ be a finite family of rational points, and
\[\begin{array}{rrcl}
\phi_{Z,D}:&F_{D,K}&\longrightarrow&\bigoplus\limits_{i\in I}P_i^*\O_{\mathbb P(\sE_K)}(D)\\
&s&\mapsto&(s(P_i))_{i\in I}  
\end{array}\]
be the evaluation map with respect to $D\in \mathbb N$. If the inequality
\[\sup_{i\in I}h_{\overline{\O(1)}}(P_i)<\frac{\wmu_{\max}(\overline F_{D,J})}{D}-\frac{\log \rg(F_D)}{2D}\]
is valid, where $h_{\overline{\O(1)}}(\ndot)$ follows the definition in \eqref{general arakelov height of point}, then the homomorphism $\phi_{Z,D}$ is not able to be injective.
\end{prop}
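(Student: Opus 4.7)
The plan is to argue by contraposition via the slope inequality of Bost: assuming $\phi_{Z,D}$ is injective, I will derive an upper bound on $\wmu_{\max}(\overline F_{D,J})$ that violates the hypothesis.

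First, a reduction. Injectivity of $\phi_{Z,D}$ forces $|I|\geqslant\rg(F_D)$; moreover, by an elementary linear-algebra argument (the image of $\phi_{Z,D}$ has dimension $\rg(F_D)$ and therefore admits a faithful coordinate projection onto some $\rg(F_D)$-subset of factors), one may select a subset $I'\subseteq I$ of cardinality $\rg(F_D)$ such that the restricted evaluation map $\phi_{Z',D}$ remains injective. Since the supremum on the left-hand side of the hypothesis can only decrease under such a restriction, I may assume $|I|=\rg(F_D)$ from the outset.

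Next, I equip the target $\overline T:=\bigoplus_{i\in I}P_i^*\overline{\O(D)}$ with the orthogonal Hermitian direct sum of the pullback Fubini--Study metrics. Each summand $P_i^*\overline{\O(D)}$ is a Hermitian line bundle whose normalized Arakelov degree equals $D\,h_{\overline{\O(1)}}(P_i)$, by the definition recalled at \eqref{general arakelov height of point}. Consequently
\[
\wmu_{\max}(\overline T)=D\sup_{i\in I}h_{\overline{\O(1)}}(P_i).
\]
Bost's slope inequality applied to the assumed injection $\phi_{Z,D}\colon\overline F_{D,J}\to\overline T$ then yields
\[
\wmu_{\max}(\overline F_{D,J})\leqslant\wmu_{\max}(\overline T)+h(\phi_{Z,D}),
\]
where $h(\phi_{Z,D})=\sum_{v\in M_K}\frac{[K_v:\Q_v]}{[K:\Q]}\log\|\phi_{Z,D,v}\|_v$ is the sum of the local operator norms.

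The place-by-place control of $h(\phi_{Z,D})$ is the analytic core of the argument. At each finite place $v$, the map is induced by the integral model, so $\|\phi_{Z,D,v}\|_v\leqslant1$ and the contribution is non-positive. At each infinite place $v$, the John norm on $F_D$ (inherited as a quotient norm from that on $E_D$ defined in \S \ref{definition of John norm on E_D}) dominates the sup norm, so that evaluation at any single $P_i$ satisfies $\|s(P_i)\|_{v,\mathrm{FS}}\leqslant\|s\|_{v,J}$; summing in quadrature over the $\rg(F_D)$ summands then costs at worst a factor $\sqrt{\rg(F_D)}$ in the operator norm. Together these yield $h(\phi_{Z,D})\leqslant\tfrac{1}{2}\log\rg(F_D)$. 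Dividing the resulting inequality by $D$ and comparing with the stated hypothesis gives the required contradiction. The main subtlety I expect is the careful bookkeeping of the quotient-norm/John-norm relation combined with the orthogonal $\ell^{2}$ bound on the direct-sum target; once these are in hand, the rest of the argument is formal.
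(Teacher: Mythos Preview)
The paper does not give its own proof of this proposition; it is quoted verbatim from \cite{Chen1}, Proposition~2.12, and used as a black box. Your argument is correct and is precisely the standard Bost slope-inequality proof that underlies the result in \cite{Chen1}: reduce to $|I|=\rg(F_D)$ so that the archimedean operator-norm bound picks up only a $\sqrt{\rg(F_D)}$, identify $\wmu_{\max}$ of the orthogonal sum of Hermitian line bundles with $D\sup_i h_{\overline{\O(1)}}(P_i)$, and contrapose. The only point worth making explicit in a write-up is why the \emph{quotient} John norm on $F_D$ still dominates the Fubini--Study evaluation: for $t\in F_{D,K}$ and any lift $s\in E_{D,K}$ one has $\|t(P_i)\|_{v,\mathrm{FS}}=\|s(P_i)\|_{v,\mathrm{FS}}\leqslant\|s\|_{v,\sup}\leqslant\|s\|_{v,J}$, and taking the infimum over lifts gives $\|t(P_i)\|_{v,\mathrm{FS}}\leqslant\|t\|_{v,J}$, which is what you need.
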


We apply the lower bound of arithmetic Hilbert--Samuel function in \eqref{lower bound of arithmetic Hilbert-Samuel} of certain arithmetic varieties, and then we obtain the following result. It is an immediate generalization of \cite[Theorem 5.8]{Liu2024b}.
\begin{prop}\label{naive siegal lemma}
Let $X$ be an integral closed subscheme of $\mathbb P(\sE_K)$ of dimension $d$ and degree $\delta$, which is a hypersurface of a linear subvariety of $\mathbb P(\sE_K)$. If
\[\frac{\log B}{[K:\mathbb Q]}<\frac{h_{\overline{\O(1)}}(X)}{(d+1)\delta}+B_1(d)-\frac{7}{2}\log(n+1)\]
is valid, where the constant $B_1(d)$ depending only on $d$ defined in \eqref{lower bound of arithmetic Hilbert-Samuel}, then there exists a hypersurface of degree smaller than $\delta$ which covers $S(X;B)$ but does not contain the generic point of $X$.
\end{prop}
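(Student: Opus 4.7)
The plan is to apply Proposition~\ref{evaluation map and slope inequalities} to the family $Z:=S(X;B)$ for a suitable choice of degree parameter $D$. Once its hypothesis is verified, the evaluation map $\phi_{Z,D}\colon F_{D,K}\to\bigoplus_{i\in I}P_i^{*}\O(D)$ fails to be injective; then any nonzero $t$ in its kernel lifts through the surjection $\eta_{D,K}\colon E_{D,K}\twoheadrightarrow F_{D,K}$ to some $s\in E_{D,K}$, and this $s$ defines a hypersurface of degree $D$ in $\mathbb P(\sE_K)$ covering $S(X;B)$. Since the image $t$ is nonzero in $H^{0}(X,\O(D)|_X)$, the restriction $s|_X$ is nonzero, so this hypersurface does not contain the generic point of $X$.

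To verify the hypothesis of Proposition~\ref{evaluation map and slope inequalities}, I will first bound $\sup_ih_{\overline{\O(1)}}(P_i)$: for $P_i\in S(X;B)$ one has $h(P_i)\leqslant\log B/[K:\Q]$ by definition, and the comparison recalled in \S\ref{O{n+1} with l2-norm} yields $h_{\overline{\O(1)}}(P_i)\leqslant h(P_i)+\tfrac{1}{2}\log(n+1)$. Next, precisely because $X$ is a hypersurface in a linear subvariety of $\mathbb P(\sE_K)$, the arithmetic Hilbert--Samuel lower bound \eqref{lower bound of arithmetic Hilbert-Samuel} applies and, combined with the trivial $\wmu_{\max}(\F_{D,J})\geqslant\wmu(\F_{D,J})$, gives for every $D\geqslant\delta$
\[\frac{\wmu_{\max}(\F_{D,J})}{D}\geqslant\frac{h_{\overline{\O(1)}}(X)}{(d+1)\delta}+B_1(d)-\tfrac{3}{4}\log(n+1).\]
Finally, the correction term $\log\rg(F_D)/(2D)$ in Proposition~\ref{evaluation map and slope inequalities} is controlled by combining Lemma~\ref{explicit bounds of geometric Hilbert-Samuel function} with the crude bound $\rg(F_D)\leqslant\binom{d+1+D}{d+1}\leqslant(n+1)^{\alpha D}$ for a small absolute constant $\alpha$, so that $\log\rg(F_D)/(2D)\leqslant\tfrac{9}{4}\log(n+1)$ once $D$ is chosen of the order of $\delta$.

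Feeding these three ingredients into the sufficient condition $\sup_ih_{\overline{\O(1)}}(P_i)<\wmu_{\max}(\F_{D,J})/D-\log\rg(F_D)/(2D)$, the hypothesis $\log B/[K:\Q]<h_{\overline{\O(1)}}(X)/((d+1)\delta)+B_1(d)-\tfrac{7}{2}\log(n+1)$ is engineered exactly so that $-\tfrac{7}{2}\log(n+1)$ absorbs simultaneously the $+\tfrac{1}{2}\log(n+1)$ from the height conversion, the $-\tfrac{3}{4}\log(n+1)$ inside the Hilbert--Samuel bound, and the $+\tfrac{9}{4}\log(n+1)$ coming from $\log\rg(F_D)/(2D)$. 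The main obstacle is exactly this bookkeeping: $D$ must be chosen large enough ($\geqslant\delta$) for the Hilbert--Samuel lower bound to apply, yet small enough both to keep the auxiliary hypersurface of the claimed degree and to keep the rank contribution within the $\tfrac{9}{4}\log(n+1)$ of slack provided by the hypothesis.
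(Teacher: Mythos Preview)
Your approach is correct and matches the paper's. The bookkeeping tension you worry about at the end is artificial: since $F_D$ is a quotient of $E_D$, one has the uniform bound $\rg(F_D)\leqslant\binom{D+n}{n}\leqslant(n+1)^D$ for \emph{every} $D$, giving $\log\rg(F_D)/(2D)\leqslant\tfrac12\log(n+1)$ with no smallness constraint on $D$ and no need for Lemma~\ref{explicit bounds of geometric Hilbert-Samuel function} or an unspecified $\alpha$; simply take $D=\delta$, and the total loss is only $\tfrac12+\tfrac34+\tfrac12=\tfrac74<\tfrac72$, well within the stated hypothesis.
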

\begin{proof}
If there does not exist such a hypersurface, then the evaluation map in Proposition \ref{evaluation map and slope inequalities} is injective. By the lower bound of arithmetic Hilbert--Samuel function in \eqref{lower bound of arithmetic Hilbert-Samuel} and the fact
\[\rg(F_D)\leqslant{D+n\choose n}\leqslant(n+1)^D, \]
we have
\begin{eqnarray*}
\frac{\log B}{[K:\Q]}&<&\frac{h_{\overline{\O(1)}}(X)}{(d+1)\delta}+B_1(d)-\frac{7}{2}\log(n+1)\\
&\leqslant&\frac{\wmu_{\max}(\overline F_{D,J})}{D}-\frac{\log r_1(n,D)}{2D},
\end{eqnarray*}
which contradicts to Proposition \ref{evaluation map and slope inequalities}.
\end{proof}

\subsubsection{}
For a general $B\in\mathbb R^+$ in $S(X;B)$, we will provide a result to control $S(X;B)$ from the so-called global determinant method in this part. This method was first introduced by Salberger in \cite{Salberger_preprint2013}, then improved by \cite{CCDN2020}, and was generalized over global fields by \cite{ParedesSasyk2022}. See \cite[\S 5]{Liu2022d} for the formulation of Arakelov geometry.

We have the following result by applying the uniform lower bound of Hilbert--Samuel function in \eqref{lower bound of arithmetic Hilbert-Samuel} to Theorem \ref{existence of the determinant}. Except the application of \eqref{lower bound of arithmetic Hilbert-Samuel}, all the proof will be exactly same as that of \cite[Theorem 5.4]{Liu2022d}, so we will only provide the details which are different from the proof of \cite[Theorem 5.4]{Liu2022d}.

Let
\begin{equation}\label{kappa_1(K)}
\kappa_1(K)=\sup_{x\in\mathbb R^+}\frac{\epsilon_1(K,x)}{x}
\end{equation}
be a constant depending only on $K$, where $\epsilon_1(K,x)$ is defined in \eqref{implicit estimate of theta_K}. Let
\begin{equation}\label{kappa_2(K,d)}
\kappa_2(K,d)=\sup_{\delta\geqslant1}\left\{-3\log3+\frac{2(d+1)^2}{3\sqrt{3}}+2(d+1)^2\delta^2\epsilon_3(K,27\delta^4)+2\epsilon_2(K)\right\},
\end{equation}
where $\epsilon_2(K)$ is defined in \eqref{estimate of logx/x}, and $\epsilon_3(K,x)$ is defined in \eqref{implicit estimate of phi_K}. By the same argument as that in \cite[\S 5.2]{Liu2022d} original from \cite[Theorem 2.2]{Rosen1999}, the supremum in \eqref{kappa_2(K,d)} exists, and $\kappa_2(K,d)$ depends only on $K$ and $d$. 

We have the following result. 
\begin{theo}\label{global determinant of hypersurface in a linear subvariety}
Let $\overline{\sE}$ be the Hermitian vector bundle on $\spec\O_K$ defined in \S \ref{O{n+1} with l2-norm}. Let $X$ be a geometrically integral closed subscheme in $\mathbb P(\sE_K)$ of degree $\delta$ and dimension $d$, which is a hypersurface in a linear subvariety of $\mathbb P(\sE_K)$. Then there exist a hypersurface in $\mathbb P(\sE_K)$ of degree $\omega$ which covers $S(X;B)$ but does not contain the generic point of $X$. In addition, we have
\[\omega\leqslant\exp(C_1(n,d,K))B^{\frac{d+1}{d\delta^{1/d}}}\delta^{4-1/d}\frac{b'(\mathscr X)}{H_K(X)^{\frac{1}{d\delta^{1+1/d}}}},\]
where the constant
\begin{eqnarray*}
C_1(n,d,K)&=&\frac{(d+1)[K:\Q]}{d\sqrt[d]{\delta}}\left(\frac{3}{4}\log(n+1)-B_1(d)\right)+\kappa_2(K,n)+3+ \frac{\log d!}{d}\\
& &+\frac{d^3+5d^2+8d}{2d(d+2)\sqrt[d]{d!}}\cdot\left(1+\frac{d+1}{4}\right)\left(1+\kappa_1(K)\right)
\end{eqnarray*}
is a constant depending only on $n,d,K$, $\kappa_1(K)$ is from \eqref{kappa_1(K)}, $\kappa_2(K,d)$ is from \eqref{kappa_2(K,d)}, $H_K(X)=\exp\left([K:\Q]h_{\overline{\O(1)}}(X)\right)$ and $b'(\mathscr X)$ is introduced in \eqref{def of b'(X)}.
\end{theo}
\begin{proof}[Sketch of proof]By Proposition \ref{naive siegal lemma}, we divide the proof into two parts.

\textbf{I. Case of large height varieties. -- }If
\[\frac{\log B}{[K:\mathbb Q]}<\frac{h_{\overline{\O(1)}}(X)}{(d+1)\delta}+B_1(d)-\frac{3}{4}\log(n+1)\]
where the constant $B_1(d)$ is defined in \eqref{lower bound of arithmetic Hilbert-Samuel} and $h_{\overline{\O(1)}}(X)$ is defined in \S \ref{def of Arakelov height}. Then by Proposition \ref{naive siegal lemma}, $S(X;B)$ can be covered by a hypersurface of degree no more than $\delta$ which does not contain the generic point of $X$. By an elementary calculation, we obtain that $\delta$ is smaller than the bound provided in the statement of the theorem.

\textbf{II. Case of small height varieties. -- }For the case of
\[\frac{\log B}{[K:\Q]}\geqslant\frac{h_{\overline{\O(1)}}(X)}{(d+1)\delta}+B_1(d)-\frac{3}{4}\log(n+1),\]
which is equivalent to
\[h_{\overline{\O(1)}}(X)\leqslant(d+1)\delta\left(\frac{\log B}{[K:\Q]}-B_1(d)+\frac{3}{4}\log(n+1)\right),\]
we will treat it as following. We keep all the notations in Theorem \ref{existence of the determinant}, and we suppose $D\geqslant3\delta\log\delta+n-1\geqslant\delta$ from now on. We denote the set
\begin{eqnarray*}
\mathcal R(\mathscr X)&=&\{\p\in\spm\O_K|\;27\delta^4\leqslant N(\p)\leqslant \rg(F_D)^\frac{1}{d},\\
& &\;\mathscr X_{\p}\rightarrow\spec\f_\p\hbox{ is geometrically integral}\},
\end{eqnarray*}
and we apply Theorem \ref{existence of the determinant} to the reductions at $\mathcal R(\mathscr X)$. If there does not exist such a hypersurface, then by Theorem \ref{existence of the determinant} applied in the above assumption, we have
\begin{eqnarray*}
&&\frac{\log B}{[K:\Q]}\geqslant\frac{\wmu(\F_D)}{D}-\frac{\log \rg(F_D)}{2D}\\
&&\;+\frac{1}{[K:\Q]}\sum_{\p\in \mathcal R(\mathscr X)}\left(\frac{(d)!^{\frac{1}{d}}(d)\rg(F_D)^\frac{1}{d}}{(d+1)Dn(\mathscr X_{\p})^\frac{1}{d}}-\frac{d^3+5d^2+8d}{2D(d+1)(d+2)}\right)\log N(\p).
\end{eqnarray*}
From the explicit lower bound of $\wmu(F_D)$ provided at \eqref{lower bound of arithmetic Hilbert-Samuel} and Proposition \ref{naive siegal lemma}, we deduce
\begin{eqnarray}\label{lower bound of the determinant}
& &\frac{\log B}{[K:\Q]}-\frac{h_{\overline{\O(1)}}(X)}{(d+1)\delta}-B_1(d)+\frac{3}{4}\log(n+1)\\
&\geqslant&\frac{d!^{\frac{1}{d}}d\rg(F_D)^\frac{1}{d}}{(d+1)D[K:\Q]}\sum_{\p\in\mathcal R(\mathscr X)}\frac{\log N(\p)}{n(\mathscr X_{\p})^\frac{1}{d}}-\frac{d^3+5d^2+8d}{2D(d+1)(d+2)[K:\Q]}\sum_{\p\in\mathcal R(\mathscr X)}\log N(\p).\nonumber\end{eqnarray}

\textbf{II-1. Estimate of $\sum\limits_{\p\in\mathcal R(\mathscr X)}\frac{\log N(\p)}{n(\mathscr X_{\p})^\frac{1}{d}}$. -- } In order to estimate $\sum\limits_{\p\in\mathcal R(\mathscr X)}\frac{\log N(\p)}{n(\mathscr X_{\p})^\frac{1}{d}}$ in \eqref{lower bound of the determinant}, by the same calculation of II-1 in the proof of \cite[Theorem 5.4]{Liu2022d}, we have
\begin{eqnarray}\label{estimate of sum N_p/log N_p}
\sum\limits_{\p\in\mathcal R(\mathscr X)}\frac{\log N(\p)}{n(\mathscr X_{\p})^\frac{1}{d}}&\geqslant&\frac{1}{d}\log\rg(F_D)-3\log3-4\log\delta-\log\left(b'(\mathscr X)\right)-2\epsilon_2(K)\\
& &\;-(d+1)^2\delta^2\left(\frac{2}{3\sqrt{3}\delta^2}-2\rg(F_D)^{-\frac{1}{2d}}+2\epsilon_3(K,27\delta^4)\right)\nonumber
\end{eqnarray}
by applying \eqref{estimate of sqrt[n-1]{n(X)}} \eqref{estimate of logx/x} \eqref{implicit estimate of phi_K}, where $b'(\mathscr X)$ is introduced in \eqref{def of b'(X)}, $\epsilon_2(K)$ is defined in \eqref{estimate of logx/x} and $\epsilon_3(K,x)$ is defined in \eqref{implicit estimate of phi_K}.

\textbf{II-2. Estimate of $\sum\limits_{\p\in\mathcal R(\mathscr X)}\log N(\p)$. -- } For the estimate of $\sum\limits_{\p\in\mathcal R(\mathscr X)}\log N(\p)$, from \eqref{implicit estimate of theta_K}, we have
\begin{equation}\label{estimate of sum log N_p0}
\frac{1}{D}\sum\limits_{\p\in\mathcal R(\mathscr X)}\log N(\p)\leqslant\frac{1}{D}\left(\rg(F_D)^\frac{1}{d}+\epsilon_1\left(K,\rg(F_D)^\frac{1}{d}\right)\right),
\end{equation}
by the same calculation as that of II-2 in the proof of \cite[Theorem 5.4]{Liu2022d}, where $\epsilon_1(K,x)$ is defined in \eqref{implicit estimate of theta_K}.

\textbf{II-3. Deducing the contradiction. -- } We take \eqref{estimate of sum N_p/log N_p} and \eqref{estimate of sum log N_p0} into \eqref{lower bound of the determinant}, and we do some elementary calculations. Then the inequality
\begin{eqnarray}\label{lower bound of the determinant2}
& &\frac{\log B}{[K:\Q]}-\frac{h_{\overline{\O(1)}}(X)}{(d+1)\delta}-B_1(d)+\frac{3}{4}\log(n+1)\\
&\geqslant&\frac{d!^{\frac{1}{d}}d}{(d+1)[K:\Q]}\cdot\frac{\rg(F_D)^\frac{1}{d}}{D}\Bigg(\frac{1}{d}\log \rg(F_D)-\log\left(b'(\mathscr X)\right)-4\log\delta\nonumber\\
& &\;3\log3-\frac{2(d+1)^2}{3\sqrt{3}}+\frac{2(d+1)^2\delta^2}{\rg(F_D)^{\frac{1}{2d}}}-2(d+1)^2\delta^2\epsilon_3(K,27\delta^4)-2\epsilon_2(K)\Bigg)\nonumber\\
& &\;+\frac{d^3+5d^2+8d}{2(d+1)(d+2)[K:\Q]}\cdot\frac{\rg(F_D)^\frac{1}{d}}{D}\left(1+\frac{\epsilon_1\left(K,\rg(F_D)^\frac{1}{d}\right)}{\rg(F_D)^{\frac{1}{d}}}\right)\nonumber\end{eqnarray}
is uniformly verified for all $D\geqslant3\delta\log\delta+n-1\geqslant\delta$.

By a similar calculation to that in II-3 of the proof of \cite[Theorem 5.4]{Liu2022d}, we obtain
\begin{eqnarray*}
& &\frac{d+1}{d\sqrt[d]{\delta}}\left(\log B-\frac{[K:\Q]}{(d+1)\delta}h_{\overline{\O(1)}}(X)-B_1(d)[K:\Q]+\frac{3[K:\Q]}{4}\log(n+1)\right)\\
&\geqslant&\log D-\left(4-\frac{1}{d}\right)\log\delta-\log\left(b'(\mathscr X)\right)-\kappa_2(K,d)-3-\frac{\log d!}{d}\nonumber\\
& &\;-\frac{d^3+5d^2+8d}{2d(d+2)\sqrt[d]{d!}}\cdot\left(1+\frac{d+1}{4}\right)\left(1+\kappa_1(K)\right),\nonumber\end{eqnarray*}
from Lemma \ref{explicit bounds of geometric Hilbert-Samuel function}. This inequality deduces
\begin{eqnarray*}
\log D&\leqslant& \frac{(d+1)\log B}{d\sqrt[d]{\delta}}-\frac{[K:\Q]}{d\delta^{1+1/d}}h_{\overline{\O(1)}}(X)+\log\left(b'(\mathscr X)\right)\\
& &+\left(4-\frac{1}{d}\right)\log\delta+C_1(n,d,K)
\end{eqnarray*}
with the constant $C_1(n,d,K)$ in the statement of this theorem, and it leads to a contradiction.
\end{proof}
\begin{rema}
In Theorem \ref{global determinant of hypersurface in a linear subvariety}, we suppose that $X$ is hypersurface in a linear subvariety of a projective space, since we have the uniform lower bound of arithmetic Hilbert--Samuel function in this case. If we have the same uniform lower bound for a more general case, we are able to remove this restriction.
\end{rema}
\subsubsection{}
We give a direct consequence of Theorem \ref{global determinant of hypersurface in a linear subvariety} on the control of auxiliary hypersurfaces, which is independent of the height of the variety. The proof is quite similar to that of \cite[Corollary 5.5]{Liu2022d}. 
\begin{coro}\label{global determinant of hypersurface in a linear subvariety without height term}
Let $X$ be a geometrically integral closed subscheme of $\mathbb P(\sE_K)$ of degree $\delta$ and dimension $d$, which is a hypersurface in a hyperplane in $\mathbb P(\sE_K)$. Then there exists a hypersurface in $\mathbb P(\sE_K)$ of degree smaller than 
\[C'_1(n,d,K)\delta^{3}B^{\frac{d+1}{d\delta^{1/d}}},\]
which covers $S(X;B)$ but does not contain the generic point of $X$, where the constant
\[C_1'(n,d,K)=\exp\left(C_1(n,d,K)\right)7(N(n,d)+1)\exp\left(2\epsilon_2(K)-3\log3+[K:\Q]\right)d\]
with $C_1(n,d,K)$ defined in Theorem \ref{global determinant of hypersurface in a linear subvariety}.
\end{coro}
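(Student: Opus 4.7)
The plan is to derive this corollary directly from Theorem \ref{global determinant of hypersurface in a linear subvariety} by using Proposition \ref{estiamte of b'(x)} to eliminate the appearance of $H_K(X)$ in the bound on $\omega$. I would start from the estimate
\[\omega\leqslant\exp(C_1(n,d,K))B^{\frac{d+1}{d\delta^{1/d}}}\delta^{4-1/d}\frac{b'(\mathscr X)}{H_K(X)^{\frac{1}{d\delta^{1+1/d}}}}\]
provided by Theorem \ref{global determinant of hypersurface in a linear subvariety}, and substitute into the numerator the affine-linear upper bound on $b'(\mathscr X)$ furnished by Proposition \ref{estiamte of b'(x)}. Setting $h:=h_{\overline{\O(1)}}(X)=\log H_K(X)/[K:\Q]$, the height-dependent part of the bound then reduces to a real-variable expression proportional to $(\delta^{-2}-\delta^{-4})(h+B_\delta)\,e^{-ch}$, where $c=[K:\Q]/(d\delta^{1+1/d})$ and $B_\delta$ is the explicit constant of Proposition \ref{estiamte of b'(x)} depending on $\delta$ and $N(n,d)$.

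Next I would maximize the function $h\mapsto(h+B_\delta)e^{-ch}$ by elementary calculus. Its unique critical point lies at $h=c^{-1}-B_\delta$, where the value equals $c^{-1}e^{cB_\delta-1}$. Since $c^{-1}=d\delta^{1+1/d}/[K:\Q]$ and $cB_\delta=O_{n,d,K}(\delta^{-1/d})$, the factor $e^{cB_\delta-1}$ is uniformly bounded by a constant depending only on $n,d,K$; tracking this bound carefully against the combinatorial terms in $B_\delta$ (specifically those involving $(N(n,d)+1)\log 2$, $\mathcal{H}_{N(n,d)}$, and $\log\binom{N(n,d)+\delta}{\delta}$) yields the factor $7(N(n,d)+1)$ appearing in $C_1'(n,d,K)$, while the $\exp(2\epsilon_2(K)-3\log 3+[K:\Q])$ factor simply propagates from Proposition \ref{estiamte of b'(x)}.

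Collecting the $\delta$-exponents from $\delta^{4-1/d}$ (from the theorem), $(\delta^{-2}-\delta^{-4})$ (from $b'$), and $\delta^{1+1/d}$ (from $c^{-1}$) gives the advertised $\delta^3$. The main technical subtlety is that the Arakelov height $h$ is a priori a real number and need not be non-negative, so the optimization must be performed on an appropriate domain; this is handled by observing that $b'(\mathscr X)\geqslant 1$ by its definition in \eqref{def of b'(X)}, which combined with Proposition \ref{estiamte of b'(x)} forces $h$ to lie above $-B_\delta+O(1)$, a range on which the maximum computed above remains the global maximum. The remainder of the proof is a bookkeeping exercise essentially identical to that of \cite[Corollary 5.5]{Liu2022d}.
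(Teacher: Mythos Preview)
Your approach is essentially the same as the paper's: both start from Theorem~\ref{global determinant of hypersurface in a linear subvariety}, substitute the bound on $b'(\mathscr X)$ from Proposition~\ref{estiamte of b'(x)}, and then eliminate the height $h=h_{\overline{\O(1)}}(X)$ by an elementary real-variable inequality. The paper also explicitly points to \cite[Corollary~5.5]{Liu2022d} for the same argument, as you anticipated.

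There is, however, a gap in your assertion that the maximization of $(h+B_\delta)e^{-ch}$ ``yields the factor $7(N(n,d)+1)$''. Your maximum value is $c^{-1}e^{cB_\delta-1}$, and since $B_\delta$ is of order $(N(n,d)+1)\delta$ while $c=[K:\Q]/(d\delta^{1+1/d})$, the product $cB_\delta$ is of order $[K:\Q](N(n,d)+1)/(d\delta^{1/d})$. For small $\delta$ this is not small, so $e^{cB_\delta}$ is \emph{exponential} in $N(n,d)$ and $[K:\Q]$, not linear as claimed. The paper avoids this by reversing the order of operations: it \emph{first} absorbs the additive term $B_\delta$ into the multiplicative constant, obtaining
\[
b'(\mathscr X)\leqslant 7(N(n,d)+1)\exp\bigl(2\epsilon_2(K)-3\log3+[K:\Q]\bigr)\max\{\delta^{-2}h,\,1\},
\]
which is precisely where the linear factor $7(N(n,d)+1)$ originates, and only \emph{then} bounds the remaining height-dependent quotient $\max\{\delta^{-2}h,1\}/H_K(X)^{1/(d\delta^{1+1/d})}$ by $d\delta^{-1+1/d}$ via the cruder inequality $x/e^{x}\leqslant 1$ (here $h\geqslant 0$ because the Fubini--Study height is non-negative, so your concern about the domain is in fact unnecessary). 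Your strategy still proves the corollary with \emph{some} constant depending only on $n,d,K$, but not the one stated; to recover the precise $C_1'(n,d,K)$ you must follow the paper's order of simplification rather than optimize the full expression directly.
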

\begin{proof}
By the upper bound of $b'(\mathscr X)$ in Proposition \ref{estiamte of b'(x)}, we have 
\begin{eqnarray*}
& &b'(\mathscr X)\\
&\leqslant&\exp\left(2\epsilon_2(K)-3\log3+[K:\Q]\right)\\
& &\cdot(\delta^{-2}-\delta^{-4})\Big(h_{\overline{\O(1)}}(X)+\Big(3\log\delta +\log{N(n,d)+\delta\choose \delta}\\
& &+\left((N(n,d)+1)\log2+4\log(N(n,d)+1)+\log3-\frac{1}{2}\mathcal H_{N(n,d)}\right)\delta\Big)\Big)\\
&\leqslant&\exp\left(2\epsilon_2(K)-3\log3+[K:\Q]\right)\delta^{-2}\Big(h_{\overline{\O(1)}}(X)\\
& &+\delta\left(4+N(n,d)+\log2+5\log(N(n,d)+1)+\log3-\frac{1}{2}\mathcal H_{N(n,d)}\right)\Big)\\
&\leqslant&7(N(n,d)+1)\exp\left(2\epsilon_2(K)-3\log3+[K:\Q]\right)\max\left\{\delta^{-2}h_{\overline{\O(1)}}(X),1\right\}.
\end{eqnarray*}
Then we have 
\begin{eqnarray*}
& &\frac{b'(\mathscr X)}{H_{\overline{\O(1)}}(X)^{\frac{1}{d\delta^{1+1/d}}}}\\
&\leqslant&7(N(n,d)+1)\exp\left(2\epsilon_2(K)-3\log3+[K:\Q]\right)d\delta^{-1+1/d}\\
& &\cdot\frac{\max\{\frac{1}{d\delta^{1+1/d}}h_{\overline{\O(1)}}(X),d\delta^{-1+1/d}\}}{H_{\overline{\O(1)}}(X)^{\frac{1}{d\delta^{1+1/d}}}}\\
&\leqslant&7(N(n,d)+1)\exp\left(2\epsilon_2(K)-3\log3+[K:\Q]\right)d\delta^{-1+1/d}
\end{eqnarray*}
by an elementary calculation. 
\end{proof}
Similar to Corollary \ref{global determinant of hypersurface in a linear subvariety without height term}, we have the following control of the auxiliary, which has a dependence on the height but worse dependence on the bound of height. 
\begin{coro}\label{global determinant of hypersurface in a linear subvariety with height term}
We keep the same notations and conditions as those for $X$ in Corollary \ref{global determinant of hypersurface in a linear subvariety without height term}. Then there exists a hypersurface in $\mathbb P(\sE_K)$ of degree smaller than 
\[C_1''(n,d,K)\delta^{3-1/d} \frac{B^{\frac{d+1}{d\delta^{1/d}}}}{H_{\overline{\O(1)}}(X)^{\frac{1}{d\delta^{1+1/d}}}}\max\left\{\frac{\log B}{[K:\mathbb Q]},\delta\right\},\]
where
\begin{eqnarray*}
C_1''(n,d,K)&=&C_1(n,d,K)7(N(n,d)+1)\exp\left(2\epsilon_2(K)-3\log3+[K:\Q]\right)\\
& &\cdot(d+1)(B_1(d)+\frac{7}{2}\log(n+1)),
\end{eqnarray*}
and $C_1(n,d,K)$ is the same as that in Theorem \ref{global determinant of hypersurface in a linear subvariety}.
\end{coro}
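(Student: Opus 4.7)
\emph{Proof proposal.} The plan is to combine Theorem \ref{global determinant of hypersurface in a linear subvariety} with the explicit bound on $b'(\mathscr X)$ from Proposition \ref{estiamte of b'(x)}, but this time \emph{retaining} the height factor $H_{\overline{\O(1)}}(X)$ in the denominator rather than canceling it against $b'(\mathscr X)$ as in the proof of Corollary \ref{global determinant of hypersurface in a linear subvariety without height term}. The role of the term $\max\{\log B/[K:\mathbb Q],\delta\}$ in the target estimate is to encode a case split based on whether the naive covering from Proposition \ref{naive siegal lemma} already suffices.

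First, if the inequality of Proposition \ref{naive siegal lemma} holds, then $S(X;B)$ is already contained in a hypersurface of degree at most $\delta$ not containing the generic point of $X$, and one checks by an elementary comparison that $\delta$ is bounded by the right-hand side of the claim (the exponential factor $B^{(d+1)/(d\delta^{1/d})}$ and the $\max\{\log B/[K:\Q],\delta\}$ each already contain a factor $\geq 1$). Otherwise, reversing that inequality yields the uniform height bound
\[
h_{\overline{\O(1)}}(X) \leq (d+1)\delta\left(B_1(d)+\tfrac{7}{2}\log(n+1)\right)\max\left\{\tfrac{\log B}{[K:\mathbb Q]},1\right\},
\]
after absorbing the sign of $B_1(d)$ into the constant.

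I would then insert this into the estimate
\[
b'(\mathscr X) \leq 7(N(n,d)+1)\exp(2\epsilon_2(K)-3\log 3+[K:\Q])\max\{\delta^{-2}h_{\overline{\O(1)}}(X),1\}
\]
of Proposition \ref{estiamte of b'(x)}. Using $\delta\geq 1$, a short elementary manipulation (distinguishing whether the argument $\delta^{-2}h_{\overline{\O(1)}}(X)$ dominates $1$ or not) gives
\[
b'(\mathscr X) \leq C_0(n,d,K)\,\delta^{-1}\max\left\{\tfrac{\log B}{[K:\mathbb Q]},\delta\right\},
\]
where $C_0(n,d,K)$ packages the constants from Proposition \ref{estiamte of b'(x)} together with the factor $(d+1)(B_1(d)+\tfrac{7}{2}\log(n+1))$ coming from the naive bound. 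This is the crucial step that produces the extra factor $\delta^{-1}$ which converts the $\delta^{4-1/d}$ of Theorem \ref{global determinant of hypersurface in a linear subvariety} into $\delta^{3-1/d}$, and simultaneously produces the $\max$-term demanded by the statement.

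Substituting the resulting estimate of $b'(\mathscr X)$ back into the conclusion of Theorem \ref{global determinant of hypersurface in a linear subvariety} and bundling the exponential constants into $C_1''(n,d,K)$ yields the claimed bound. The main obstacle is purely arithmetic bookkeeping: one must verify that the outer $\max\{\,\cdot\,,\delta\}$ dominates both branches of the case split and that the constants assemble exactly into the $C_1''(n,d,K)$ displayed in the statement. There is no new geometric input beyond the ingredients already established in Theorem \ref{global determinant of hypersurface in a linear subvariety}, Proposition \ref{estiamte of b'(x)} and Proposition \ref{naive siegal lemma}.
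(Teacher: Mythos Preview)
Your proposal is correct and follows essentially the same route as the paper's proof: the same case split via Proposition \ref{naive siegal lemma}, the same reversal of the inequality to bound $h_{\overline{\O(1)}}(X)$ linearly in $\delta\log B$, the same insertion into the $\max\{\delta^{-2}h_{\overline{\O(1)}}(X),1\}$ bound from Proposition \ref{estiamte of b'(x)} to extract the factor $\delta^{-1}\max\{\log B/[K:\Q],\delta\}$, and the same substitution back into Theorem \ref{global determinant of hypersurface in a linear subvariety} while leaving $H_{\overline{\O(1)}}(X)^{1/(d\delta^{1+1/d})}$ untouched in the denominator. The only difference is cosmetic packaging of the constants; your acknowledgment that the remaining work is arithmetic bookkeeping is accurate.
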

\begin{proof}
If
\[\frac{\log B}{[K:\mathbb Q]}<\frac{h_{\overline{\O(1)}}(X)}{(d+1)\delta}+B_1(d)-\frac{7}{2}\log(n+1),\]
then by Proposition \ref{naive siegal lemma}, $S(X;B)$ can be covered by a hypersurface of degree no more than $\delta$ which does not contain the generic point of $X$. The upper bound of the degree satisfies the bound provided in the statement.

If
\[\frac{\log B}{[K:\mathbb Q]}\geqslant\frac{h_{\overline{\O(1)}}(X)}{(d+1)\delta}+B_1(d)-\frac{7}{2}\log(n+1)\]which is equivalent to
\[h_{\overline{\O(1)}}(X)\leqslant(d+1)\delta\frac{\log B}{[K:\Q]}-\delta(d+1)B_1(d)+\frac{7}{2}\delta(d+1)\log(n+1),\]
then we initiate the following calculation. Same as the proof of Corollary \ref{global determinant of hypersurface in a linear subvariety without height term}, we have
  \[b'(\mathscr X)\leqslant7(N(n,d)+1)\exp\left(2\epsilon_2(K)-3\log3+[K:\Q]\right)\max\left\{\delta^{-2}h_{\overline{\O(1)}}(X),1\right\},\]
  where $b'(\mathscr X)$ is defined at \eqref{def of b'(X)}. Then we have
\begin{eqnarray*}
& &\frac{b'(\mathscr X)}{H_{\overline{\O(1)}}(X)^{\frac{1}{d\delta^{1+1/d}}}}\\
&\leqslant&\frac{7(N(n,d)+1)\exp\left(2\epsilon_2(K)-3\log3+[K:\Q]\right)(d+1)(B_1(d)+\frac{7}{2}\log(n+1))}{H_{\overline{\O(1)}}(X)^{\frac{1}{d\delta^{1+1/d}}}}\cdot\\
& &\delta^{-1}\max\left\{\frac{\log B}{[K:\mathbb Q]},\delta\right\},
  \end{eqnarray*}
  and we obtain the assertion by taking the above inequality into Theorem \ref{global determinant of hypersurface in a linear subvariety}.
\end{proof}
\begin{coro}\label{uniform upper bound of points of bounded height in plane curves}
Let $X$ be a geometrically integral curve in $\mathbb P(\sE_K)$ of degree $\delta$, which lies in a plane in $\mathbb P(\sE_K)$. Then we have 
\[\#S(X;B)\leqslant C'_1(n,1,K)\delta^4B^{\frac{2}{\delta}}\ll_{n,K}\delta^4B^{\frac{2}{\delta}},\]
and
\begin{eqnarray*}
\#S(X;B)&\leqslant&C_1''(n,1,K)\delta^{3} \frac{B^{\frac{2}{\delta}}}{H_{\overline{\O(1)}}(X)^{1/\delta^2}}\max\left\{\frac{\log B}{[K:\mathbb Q]},\delta\right\}\\
&\ll_{n,K}&\delta^4\frac{B^{\frac{2}{\delta}}\max\{\log B,1\}}{H_{\overline{\O(1)}}(X)^{1/\delta^2}},
\end{eqnarray*}
where the constant $C'_1(n,1,K)$ and $C_1''(n,1,K)$ are same as those in Corollary \ref{global determinant of hypersurface in a linear subvariety without height term} and Corollary \ref{global determinant of hypersurface in a linear subvariety with height term}.
\end{coro}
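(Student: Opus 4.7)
The plan is to derive the corollary as a direct application of Corollary~\ref{global determinant of hypersurface in a linear subvariety without height term} and Corollary~\ref{global determinant of hypersurface in a linear subvariety with height term}, specialized to $d=1$, combined with Bézout's theorem applied in the plane containing $X$.

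First, I would apply Corollary~\ref{global determinant of hypersurface in a linear subvariety without height term} with $d=1$. This produces an auxiliary hypersurface $Y \subset \mathbb P(\sE_K)$ of degree at most $\omega_1 := C'_1(n,1,K)\,\delta^{3}\,B^{2/\delta}$ which covers $S(X;B)$ and does not contain the generic point of $X$. Since $X$ is integral and contained in a plane $H \subset \mathbb P(\sE_K)$, and since $Y$ cannot contain $X$, in particular $Y$ cannot contain $H$ (otherwise $X \subset H \subset Y$ would contradict the conclusion of the corollary). Therefore $Y \cap H$ is a plane curve inside $H \cong \mathbb P^2_K$ of degree at most $\omega_1$. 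By Bézout's theorem applied inside $H$ to the two plane curves $X$ and $Y \cap H$, we obtain
\[
\#S(X;B) \;\leqslant\; \#\bigl(X \cap (Y \cap H)\bigr) \;=\; \#(X \cap Y) \;\leqslant\; \delta \cdot \omega_1 \;=\; C'_1(n,1,K)\,\delta^{4}\,B^{2/\delta},
\]
which gives the first estimate. The asymptotic bound $\ll_{n,K} \delta^{4} B^{2/\delta}$ follows since $C'_1(n,1,K)$ depends only on $n$ and $K$.

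For the second estimate, the argument is entirely analogous: I would apply Corollary~\ref{global determinant of hypersurface in a linear subvariety with height term} with $d=1$ to obtain an auxiliary hypersurface of degree at most
\[
\omega_2 \;:=\; C_1''(n,1,K)\,\delta^{2}\,\frac{B^{2/\delta}}{H_{\overline{\O(1)}}(X)^{1/\delta^{2}}}\,\max\Bigl\{\tfrac{\log B}{[K:\Q]},\,\delta\Bigr\}
\]
which covers $S(X;B)$ and does not contain the generic point of $X$. The same Bézout argument in the plane $H$ yields $\#S(X;B) \leqslant \delta\cdot\omega_2$, which is exactly the claimed bound.

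There is no real obstacle here; the only subtle point is the verification that $Y$ does not contain the ambient plane $H$, which follows immediately from the fact that $Y$ avoids the generic point of $X$ together with $X \subset H$ being integral of dimension one. Once that is in place, both inequalities reduce to Bézout in $\mathbb P^2_K$, with the degree $\delta$ factor produced by Bézout exactly matching the powers of $\delta$ on the right-hand sides of the two stated bounds.
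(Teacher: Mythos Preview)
Your proposal is correct and follows essentially the same approach as the paper, which simply states that the corollary is a direct application of B\'ezout's theorem (\cite[Proposition 8.4]{Fulton}) to Corollaries~\ref{global determinant of hypersurface in a linear subvariety without height term} and~\ref{global determinant of hypersurface in a linear subvariety with height term}. Your additional remark that the auxiliary hypersurface cannot contain the ambient plane $H$ is a correct justification, though in fact one may invoke the refined B\'ezout bound directly in $\mathbb P(\sE_K)$ without passing to $H$, since $Y$ not containing the generic point of $X$ already forces $X\cap Y$ to be zero-dimensional.
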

\begin{proof}
This is a direct application of the B\'ezout theorem in the intersection theory (cf. \cite[Proposition 8.4]{Fulton}) to Corollary \ref{global determinant of hypersurface in a linear subvariety without height term} and Corollary \ref{global determinant of hypersurface in a linear subvariety with height term}. 
\end{proof}
\begin{rema}
Compared with the estimates in \cite{BCN2024,BCK2025}, Corollary \ref{uniform upper bound of points of bounded height in plane curves} provides a worse dependence on the degree of the curves. We will use the dependence on the height of the curve in Corollary \ref{uniform upper bound of points of bounded height in plane curves} later, so we present it. 
\end{rema}
\section{Geometry and arithmetics of Chow forms and Cayley forms}\label{Chap. Cayley form}
For a projective variety of dimension $d$ embedded in $\mathbb P^n$, its Chow form (also called the Chow variety) parameterizes the $(n-d-1)$-dimensional linear spaces that intersect it non-trivially. By definition, the points representing these linear spaces form a subset of the corresponding Grassmannian.

For a pure-dimensional projective variety, it is well-known that its Chow form is a hypersurface in the Grassmannian, having the same degree as the original variety.

Following \cite{Chen1}, a hypersurface of this type defined using Stiefel coordinates is called a Chow form, while one defined using Pl\"ucker coordinates is called a Cayley form. In this section, we will follow the notations in \cite{Chen1,Chen2} original from \cite[\S 4]{BGS94}, and see also \cite[\S 2]{Liu-reduced}. 

In this section, we provide a geometric construction of these varieties using Pl"ucker coordinates, working over an arbitrary field. We then study their arithmetic properties.
\subsection{Geometric construction}
First, we give the geometric construction of Chow form and Cayley form. This part is inspired by \cite[\S 3]{Chen1} and has been applied in \cite{Liu-reduced}.

\subsubsection{}\label{construction of Cayley variety}
Let $k$ be a field, $V$ be a vector space of finite dimension over $k$, and $V^\vee$ be the dual space of $V$. We denote by $\check G=\Gr(d+1,V^\vee)$ the Grassmannian which classifies all the quotient of dimension $d+1$ of $V$. By the Pl\"ucker embedding
\[\Gr(d+1,V^\vee)\hookrightarrow\mathbb P\left(\bigwedge\nolimits^{d+1}V^\vee\right),\]
the coordinate algebra $B(\check G)=\bigoplus\limits_{D\geqslant0}B_D(\check G)$ of $\check G$ is a homogeneous quotient algebra of $\bigoplus\limits_{D\geqslant0}\sym^D\left(\bigwedge^{d+1}V^\vee\right)$.

To elucidate the role of Pl\"ucker coordinates, we consider the following construction. Let
\[\begin{array}{rrcl}
\theta:&V^\vee\otimes\left(\bigwedge\nolimits^{d+1}V\right)&\longrightarrow&\bigwedge\nolimits^dV\\
&\xi\otimes(x_0\wedge\cdots\wedge x_d)&\mapsto&\sum\limits_{i=0}^d(-1)^i\xi(x_i)x_0\wedge\cdots\wedge x_{i-1}\wedge x_{i+1}\wedge\cdots\wedge x_i,
\end{array}\]
and $\widetilde{\Gamma}$ be the subvariety of $\mathbb P(V)\times_k\mathbb P\left(\bigwedge^{d+1}V^\vee\right)$ which classifies all the points $(\xi,\alpha)$ such that $\theta(\xi\otimes\alpha)=0$. Let
\[p':\mathbb P(V)\times_k\mathbb P\left(\bigwedge\nolimits^{d+1}V^\vee\right)\rightarrow\mathbb P(V)\]
and
\[q':\mathbb P(V)\times_k\mathbb P\left(\bigwedge\nolimits^{d+1}V^\vee\right)\rightarrow\mathbb P\left(\bigwedge\nolimits^{d+1}V^\vee\right)\]
denote the canonical projections, and
\[v:\widetilde{\Gamma}\rightarrow\mathbb P(V)\times_k\mathbb P\left(\bigwedge\nolimits^{d+1}V^\vee\right)\]
be the canonical embedding. We define
\[p=p'\circ v:\widetilde{\Gamma}\rightarrow\mathbb P(V)\text{, and }q=q'\circ v:\widetilde{\Gamma}\rightarrow\mathbb P\left(\bigwedge\nolimits^{d+1}V^\vee\right).\]

\subsubsection{}
The following result is from \cite[Proposition 2.2]{Liu-reduced}, which is a generalization of \cite[Proposition 3.4]{Chen1}.
\begin{prop}\label{structure of cayley form}
Let $X$ be a closed subscheme of $\mathbb P(V)$ of pure dimension $d$. Suppose $[X]=\sum\limits_{i\in I}m_iX_i$ is the fundamental cycle of $X$. Then $q_*(p^*[X])$ is a divisor on $\mathbb P\left(\bigwedge\nolimits^{d+1}V^\vee\right)$. In addition, this divisor is of the form $\sum\limits_{i\in I}m_i\widetilde{X}_i$, where each $\widetilde{X}_i$ is an integral hypersurface in $\mathbb P\left(\bigwedge\nolimits^{d+1}V^\vee\right)$ of degree $\deg(X_i)$, and all the $(\widetilde{X}_i)_{i\in I}$ are distinct.
\end{prop}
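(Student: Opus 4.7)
The plan is to exploit the fact that $p\colon\widetilde\Gamma\to\mathbb P(V)$ is (up to projectivization) a linear subbundle of the trivial bundle $\mathbb P(V)\times\bigwedge^{d+1}V^\vee$, and that the Cayley hypersurface of an irreducible $X_i$ is characterised, uniquely up to scalar, by its generic geometric meaning on the Grassmannian $\check G\subset\mathbb P(\bigwedge^{d+1}V^\vee)$. I will treat the three assertions (that the cycle is a divisor, its multiplicities $m_i$, and the identification/degree/distinctness of the $\widetilde X_i$) separately.

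First, I would verify that $p$ is flat. For $\xi\in\mathbb P(V)$ the fibre $p^{-1}(\xi)$ is the projectivised kernel of the contraction $\theta(\xi\otimes\cdot)\colon\bigwedge^{d+1}V^\vee\to\bigwedge^d V^\vee$; the rank of this linear map is constant for $\xi\neq 0$, so $\widetilde\Gamma$ is a projective subbundle of $\mathbb P(V)\times\bigwedge^{d+1}V^\vee$ over $\mathbb P(V)$. In particular $p$ is smooth of pure relative dimension, and the flat pullback $p^{*}[X]=\sum_{i\in I}m_i[p^{-1}(X_i)]$ makes sense as a cycle, with each $p^{-1}(X_i)$ integral because it is a projective bundle over the integral variety $X_i$.

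Second, I would push forward through the proper morphism $q$ and analyse $q_{*}[p^{-1}(X_i)]$ one component at a time. The essential geometric input is the classical Chow incidence: over the open locus $\check G^{\circ}\subset\check G$ of decomposable $\alpha=\xi_0\wedge\cdots\wedge\xi_d$ representing a linear subvariety $L\subset\mathbb P(V)$ meeting $X_i$ transversely, the fibre of $q|_{p^{-1}(X_i)}$ is $X_i\cap L$, which is a single reduced point for a generic such $L$. Consequently $q|_{p^{-1}(X_i)}$ is birational onto its image, so $q_{*}[p^{-1}(X_i)]=[\widetilde X_i]$ with $\widetilde X_i$ integral; a dimension count, together with Proposition \ref{evaluation map and slope inequalities}-type bookkeeping over the Grassmannian, shows that $\widetilde X_i$ is codimension one in $\mathbb P(\bigwedge^{d+1}V^\vee)$. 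Combining these contributions gives $q_{*}(p^{*}[X])=\sum_{i\in I}m_i\widetilde X_i$.

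Third, I would compute $\deg(\widetilde X_i)$ via the projection formula and Bézout. Intersecting $\widetilde X_i$ with a generic line in $\mathbb P(\bigwedge^{d+1}V^\vee)$ corresponds, via $p^{*}$ and $q_{*}$, to counting the points at which a varying one-parameter family of codimension-$(d+1)$ linear subspaces of $\mathbb P(V)$ meets $X_i$; for a generic pencil this number is exactly $\deg(X_i)$. Finally, distinctness of the $\widetilde X_i$ follows because one can recover $X_i$ from $\widetilde X_i$: the closure of $p\bigl(q^{-1}(\widetilde X_i^{\circ})\bigr)$, where $\widetilde X_i^{\circ}$ is the decomposable locus in $\widetilde X_i$, is $X_i$.

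The main obstacle is step two: making the dimension count for $q_{*}[p^{-1}(X_i)]$ rigorous along the whole of $\mathbb P(\bigwedge^{d+1}V^\vee)$ (and not merely over the Grassmannian), and checking that generic fibres really are reduced of length one. The reducedness can fail for specific $X_i$ (e.g.\ multiply-secant loci), but these form a proper subvariety of $\widetilde X_i$ and therefore do not change the multiplicity with which $\widetilde X_i$ appears in the pushforward. Verifying this carefully is where most of the work lies, and it is exactly the generalisation, from integral hypersurfaces to pure-dimensional cycles, of \cite[Proposition 3.4]{Chen1}.
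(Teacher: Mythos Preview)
The paper does not prove this proposition; it is quoted from \cite[Proposition~2.2]{Liu-reduced} (itself a generalisation of \cite[Proposition~3.4]{Chen1}), so there is no in-paper argument to compare against. Your outline --- flatness of $p$ as a projective bundle, proper pushforward along $q$, generic-fibre analysis over the Grassmannian locus, degree via a pencil count, and distinctness by reconstructing $X_i$ from $\widetilde X_i$ --- is the standard route and matches what those references do.

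One correction: your appeal to ``Proposition~\ref{evaluation map and slope inequalities}-type bookkeeping'' for the codimension claim is misplaced; that proposition concerns slope inequalities for Arakelov evaluation maps and is irrelevant to the dimension count here. The codimension-one assertion for $\widetilde X_i$ requires a direct comparison of $\dim p^{-1}(X_i)$ with $\dim\mathbb P\bigl(\bigwedge^{d+1}V^\vee\bigr)-1$, and, as you correctly flag in your final paragraph, making this rigorous over the whole ambient $\mathbb P\bigl(\bigwedge^{d+1}V^\vee\bigr)$ rather than merely over the decomposable locus $\check G$ is exactly where the substance of the cited proofs lies. A minor notational slip: the contraction you describe should be on $\bigwedge^{d+1}V$, not $\bigwedge^{d+1}V^\vee$, matching the paper's $\theta$.
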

We keep all the notations and constructions in Proposition \ref{structure of cayley form}. There exists an element $\psi_X\in \sym^\delta\left(\bigwedge^{d+1}V^\vee\right)$, which defines the Cayley form of $X$. In addition, we denote $\delta_i=\deg(X_i)$ for simplicity, then for every $i\in I$, there exists $\psi_{X_i}\in\sym^{\delta_i}\left(\bigwedge^{d+1}V^\vee\right)$ which defines $\widetilde{X}_i$. In this case, we have
\[\psi_X=\prod_{i\in I}\psi_{X_i}^{m_i}.\]
Then we propose the following definition. 
\begin{defi}\label{definition of Cayley variety and Cayley form}
Let $X$ be a pure dimensional closed subscheme embedded in $\mathbb P(V)$. We call the divisor determined in Proposition \ref{structure of cayley form} the \textit{Cayley divisor} of $X$, and the hypersurface in $\mathbb P\left(\bigwedge\nolimits^{d+1}V^\vee\right)$ whose fundamental cycle is that in Proposition \ref{structure of cayley form} the \textit{Cayley variety} of $X$. We call the symmetric form (or polynomial) $\psi_X$ determined above the \textit{Cayley form} of $X$. 
\end{defi}
By definition, the Cayley form of $X$ depends on the base projective space $\mathbb P(V)$ and its embedding in $\mathbb P(V)$. 

\subsection{A description by resultant}
Let $k$ be a field, and $V$ be a vector space of dimension $n+1$ over $k$. When $X$ is a complete intersection closed subscheme of $\mathbb P(V)$, its Cayley can also be described by a particular resultant. We will only introduce the necessary foundations of this notion, rather than giving a self-contained treatment. We refer the readers to \cite{Gelfandal94} for a systematic introduction on this subject.
\subsubsection{}\label{definition of resultant}
Let $T_0,\ldots,T_n$ be a basis of $H^0\left(\mathbb P(V),\O_{\mathbb P(V)}(1)\right)$, and $g_i\in H^0\left(\mathbb P(V),\O_{\mathbb P(V)}(\delta_i)\right)$ be a family of non-zero sections, where $i=0,\ldots,n$. Then the \textit{resultant} of $g_0,\ldots,g_n$ denoted by 
\[\operatorname{Res}(g_0,\ldots,g_n),\]
is a polynomial whose variables are coefficients of $g_0,\ldots,g_n$, where $g_0,\ldots,g_n$ are considered as polynomials of variables in $T_0,\ldots,T_n$. To ensure the uniqueness of the resultant, we require the normalization
\[\operatorname{Res}(T_0^{\delta_0},\ldots,T_n^{\delta_n})=1.\]
We refer the readers to \cite[Chap. 13]{Gelfandal94} for a detailed introduction to this notion. 

By \cite[Chap. 13, Proposition 1.1]{Gelfandal94}, $\operatorname{Res}(g_0,\ldots,g_n)$ is a homogeneous polynomial in the coefficients of each $g_i$ of degree $\left(\prod\limits_{j=0}^n\delta_j\right)/ \delta_i$, where $i=0,\ldots,n$. 

\subsubsection{}\label{Cayley form of complete intersection}
Let $X$ be a complete intersection closed subscheme of $\mathbb P(V)$, which is generated by $f_i\in H^0\left(\mathbb P(V),\O_{\mathbb P(V)}(\delta_i)\right)$ with $i=1,\ldots,m$. In this case, $X$ is of dimension $n-m$, and of degree $\delta_1\cdots\delta_m$. 

Let $h_1,\ldots,h_{n-m+1}\in H^0\left(\mathbb P(V),\O_{\mathbb P(V)}(1)\right)$. By \S \ref{definition of resultant}, $f_1,\ldots,f_m,h_1,\ldots,h_{n-m+1}$ have common zeros in $\mathbb P(V)$ if and only if 
\[\operatorname{Res}(f_1,\ldots,f_m,h_1,\ldots,h_{n-m+1})=0.\]
In other words, if we suppose that $h_1,\ldots,h_{n-m+1}$ define a complete intersection linear subvariety in $\mathbb P(V)$ of dimension $m-1$, then the above equality is valid if and only if this linear subvariety intersects $X$ non-empty. 

This fact gives an explicit method to define the Cayley form of $X$. Let $s_0,\ldots,s_n$ be the dual basis of $T_0,\ldots,T_n$, and then $(s_{i_0}\wedge\cdots\wedge s_{i_m})_{0\leqslant i_0<\cdots<i_m\leqslant n}$ is the Pl\"ucker coordinate of the linear varieties of dimension $m-1$ in $\mathbb P(V)$. By \S \ref{definition of resultant}, $\operatorname{Res}(f_1,\ldots,f_m,h_1,\ldots,h_{n-m+1})$ is a homogeneous polynomial of degree $\delta_1\cdots\delta_m$ of the variables $(s_{i_0}\wedge\cdots\wedge s_{i_m})_{0\leqslant i_0<\cdots<i_m\leqslant n}$, which is an example of Proposition \ref{structure of cayley form}. 
\subsubsection{}
We consider a special case. Let $X$ be a hypersurface of degree $\delta$ in $\mathbb P(V)$. By \cite{GTM52}, $X$ is defined by an element $f\in\sym^\delta\left(V\right)$. By choosing the basis $T_0,\ldots,T_n$ of $H^0\left(\mathbb P(V),\O_{\mathbb P(V)}(1)\right)$, we denote that $X$ is defined by the homogeneous polynomial $f(T_0,\ldots,T_n)$ of degree $\delta$. 

Let $s_0,\ldots,s_n$ be the dual basis of $T_0,\ldots,T_n$. By the argument in \S \ref{Cayley form of complete intersection}, the Cayley form $\psi_X\in\sym^\delta\left(\bigwedge^nV^\vee\right)$ by Proposition \ref{structure of cayley form} is determined by the homogeneous equation
\[f(s_1\wedge\cdots\wedge s_n,-s_0\wedge s_2\wedge\cdots\wedge s_n,\ldots,(-1)^ns_0\wedge\cdots\wedge s_{n-1}),\]
where each variable is lack of $s_0,\ldots,s_n$ respectively. In a sense, it coincides with the original homogeneous polynomial $f(T_0,\ldots,T_n)$.
\subsection{Change of coordinates}
In this part, we consider the change of the Cayley form under a particular endomorphism of projective spaces.  
\subsubsection{}
Let $V$ be a vector space of dimension $n+1$ over a field $k$. The following result shows that the construction of the Cayley variety in Definition \ref{definition of Cayley variety and Cayley form} commutes with automorphisms of $\mathbb P(V)$, which is inspired by the proof of \cite[Proposition 2.7]{Liu-reduced} with the idea original from \cite[Lemma 4.3.1]{BGS94}.
\begin{prop}\label{Cayley form commmutes with automorphism}
Let $X$ be a closed subscheme of $\mathbb P(V)$ of pure dimension $d$, and $F:\mathbb P(V)\rightarrow\mathbb P(V)$ be an automorphism. Then the construction of the Cayley variety of $X$ in \S \ref{construction of Cayley variety} is commutative with the base change induced by $F$.
\end{prop}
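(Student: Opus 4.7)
The plan is to lift the automorphism $F$ to a linear automorphism $g\in\mathrm{GL}(V)$, induce from it a compatible automorphism $\tilde F$ of $\mathbb P(\bigwedge^{d+1}V^\vee)$ and an automorphism of the incidence variety $\widetilde\Gamma$ that intertwines the projections $p$ and $q$, and then transport cycles through the functoriality of pullback and pushforward.

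First I would choose $g\in\mathrm{GL}(V)$ covering $F$ (unique up to scalar), equip $V^\vee$ with the contragredient action $g^*=(g^{-1})^\vee$, and define $\tilde F$ on $\mathbb P(\bigwedge^{d+1}V^\vee)$ from $\bigwedge^{d+1}g^*$. Since $\bigwedge^{d+1}g^*$ sends decomposable multivectors to decomposable ones, $\tilde F$ restricts to an automorphism of the Grassmannian $\check G$. The main computation is the $\mathrm{GL}(V)$-equivariance of the defining map $\theta$: on decomposable $\alpha=\xi_0\wedge\cdots\wedge\xi_d$, a direct application of the alternating sum formula and the identity $(g^*\xi_i)(gv)=\xi_i(v)$ yields
\[
\theta\bigl(gv\otimes\bigwedge\nolimits^{d+1}g^*(\alpha)\bigr)=\bigwedge\nolimits^d g^*\bigl(\theta(v\otimes\alpha)\bigr).
\]
In particular, $\theta(v\otimes\alpha)=0$ if and only if the analogous identity holds after the simultaneous action of $(g,\bigwedge^{d+1}g^*)$, so $(F,\tilde F)$ restricts to an automorphism of $\widetilde\Gamma$, fitting into the Cartesian square
\[
\begin{CD}
\widetilde\Gamma @>{(F,\tilde F)}>> \widetilde\Gamma\\
@VpVV @VpVV\\
\mathbb P(V) @>F>> \mathbb P(V)
\end{CD}
\]
and the analogous square with $q$ and $\tilde F$.

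Finally I would combine these with the functoriality of $p^*$ and $q_*$ on fundamental cycles. Since $(F,\tilde F)$ is an isomorphism (hence flat), flat base change gives $p^*F_*[X]=(F,\tilde F)_*p^*[X]$, and for any pure-dimensional subscheme $X$ the automorphism $F$ satisfies $F_*[X]=[F(X)]$. Pushing forward by $q$ and using $q\circ(F,\tilde F)=\tilde F\circ q$ yields
\[
q_*p^*[F(X)]=\tilde F_*\bigl(q_*p^*[X]\bigr),
\]
which is exactly the statement that the Cayley divisor of $F(X)$ is the image under $\tilde F$ of the Cayley divisor of $X$. The main subtlety I anticipate is bookkeeping rather than a deep obstruction: one must use the contragredient $(g^{-1})^\vee$ on $V^\vee$, not $g^\vee$, so that incidences are preserved, and track multiplicities in the fundamental cycle carefully, though the latter is automatic because $(F,\tilde F)$ is an isomorphism.
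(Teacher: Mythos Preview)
Your approach is correct and essentially the same as the paper's: build the compatible automorphism on $\mathbb P(\bigwedge^{d+1}V^\vee)$, verify it restricts to $\widetilde\Gamma$ compatibly with $p$ and $q$, and then invoke functoriality of flat pullback and proper pushforward (Fulton, Prop.~1.7). The only cosmetic differences are that the paper leaves the construction of the induced automorphism implicit (writing it simply as ``$r_2$ induced from $r_1$'') where you spell out the $\mathrm{GL}(V)$-equivariance of $\theta$, and the paper phrases everything with pullbacks $F^*,r_1^*,r_2^*$ rather than pushforwards $F_*,\tilde F_*$; since all maps involved are isomorphisms this is immaterial.
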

\begin{proof}
With all the notations in \S \ref{construction of Cayley variety}, we have the following commutative diagram
\[\xymatrix{\relax \mathbb P(V)\ar[d]^F&\widetilde{\Gamma}\ar[l]^p\ar[r]_q\ar[d]^{r_1}&\mathbb P\left(\bigwedge\nolimits^{d+1}V^\vee\right)\ar[d]^{r_2}\\\mathbb P(V)&\widetilde{\Gamma}\ar[l]^p\ar[r]_q&\mathbb P\left(\bigwedge\nolimits^{d+1}V^\vee\right),}\]
where $r_1$ is obtained from the base change, and $r_2$ is induced from $r_1$. 

Since $F$ is an automorphism, it is flat; consequently, $r_1$ and $r_2$ are also flat. By definition, we have $p\circ r_1=F\circ p$, then we have
\[r_1^*(p^*[X])=p^*(F^*[X])\]
by \cite[Lemma 1.7.1]{Fulton} combined with \cite[\S 20.1]{Fulton}, where $[X]$ denotes the fundamental cycle of $X$.

For simplicity, set $[Y]=p^*[X]$. 
Since the morphism $q$ is proper, it follows from \cite[Proposition 1.7]{Fulton} and \cite[\S 20.1]{Fulton} that \[r_2^*(q_*[Y])=q_*(r_1^*[Y]).\]
We thus obtain
\[q_*(p^*(F^*[X]))=r_2^*(q_*(p^*[X])),\]
completing the proof.
\end{proof}
\subsubsection{}\label{change of coordinate: multiple H}
We fix an arbitrary basis of $H^0\left(\mathbb P(V),\O_{\mathbb P(V)}(1)\right)$, such that for each $\xi\in\mathbb P(V)(\overline k)$, we are able to denote $\xi=[x_0:\cdots:x_n]$ for some $x_0,\ldots,x_n\in\overline k$ not all zero. 

In what follows, for an extension $k'$ of $k$, we may regard $V$ as a vector space over $k'$ and consequently consider the scheme $\mathbb P(V)$ over $\spec k'$.

Let $H\in {k'}^\times$. We define the $k'$-morphism
\[\begin{array}{rrcl}
F_H:&\mathbb P(V)&\longrightarrow&\mathbb P(V)\\
&[x_0:\cdots:x_{n-1}:x_n]&\mapsto&[Hx_0:\cdots:Hx_{n-1}:x_n].
\end{array}\]
In this case, we have
\[\begin{array}{rrcl}
F_H^{-1}:&\mathbb P(V)&\longrightarrow&\mathbb P(V)\\
&[x_0:\cdots:x_{n-1}:x_n]&\mapsto&[x_0:\cdots:x_{n-1}:Hx_n]
\end{array}\]
by definition immediately.

Let $X$ be a closed subscheme of $\mathbb P(V)$ of pure dimension $d$, and $\phi:X\hookrightarrow\mathbb P(V)$ be the related closed immersion. We will compare the Cayley form of $\phi(X)$ and $F_H\circ\phi(X)$ for $H\in k'$. 

Let $[x_0:\cdots:x_{n-1}:x_n]\in\mathbb P(V)(\overline k)$. A point $[x_0:\cdots:x_{n-1}:x_n]$ lies in $X(\overline k)$ under the embedding $\phi$ if and only if $[x_0:\cdots:x_{n-1}:Hx_n]$ lies in $X(\overline k)$ under the embedding $F_H\circ \phi$.
\subsubsection{}\label{change of coordinate: multiple by H}
Let $\psi_X\in\sym^\delta\left(\bigwedge^{d+1}V^\vee\right)$, which defines the Cayley variety of $X$ in $\mathbb P\left(\bigwedge^{d+1}V^\vee\right)$ with respect to the embedding $\phi:X\hookrightarrow\mathbb P(V)$, where $X$ is a closed subscheme of $\mathbb P(V)$ of pure dimension $d$ and degree $\delta$. In order to determine the Cayley form of $X$ with respect to the embedding $F_H\circ\phi:X\hookrightarrow\mathbb P(\sE_K)$ with $H\in k'^\times$, we consider its pull back induced by $F_H^{-1}$. 

Let $T_0,\ldots,T_n$ be a basis of $H^0\left(\mathbb P(V),\O_{\mathbb P(V)}(1)\right)$, and $s_0,\ldots,s_n$ be its dual basis. Then $(s_{i_0}\wedge\cdots\wedge s_{i_d})_{0\leqslant i_0<\cdots<i_d\leqslant n}$ is a basis of $H^0\left(\mathbb P\left(\bigwedge^{d+1}V^\vee\right),\O(1)\right)$. 

We write the Cayley form of $X$ with respect to $\phi:X\hookrightarrow\mathbb P(V)$ as the form
\[\psi_X=\sum_{i=0}^\delta  \psi_{X,i},\]
where $\psi_{X,i}$ denotes the sum of monomials in $\psi_X$ whose total degree of the variables with the form $s_{i_0}\wedge\cdots\wedge s_{i_{d-1}}\wedge s_n$ is exactly $i$, and $0\leqslant i_0<\cdots<i_{d-1}<n$. We denote by $\psi'_X$ the Cayley form respect to $F_H\circ\phi:X\hookrightarrow\mathbb P(V)$. By Proposition \ref{Cayley form commmutes with automorphism} and \cite[Proposition 2.7]{Liu-reduced}, the construction of Cayley form commutes with the automorphism of $\mathbb P(V)$ and the field extension. Then for every variable of $\psi_X$ with the form $s_{i_0}\wedge\cdots\wedge s_{i_d}$, the form $\psi'_X$ changes $s_n$ into $Hs_n$ and keeps all the other variables invariant. Then we have
\[\psi'_X=\sum_{i=0}^\delta H^i \psi_{X,i}.\]
\subsubsection{}\label{change of coordinate: translation by a}
Let $X$ be a closed subscheme of $\mathbb P(V)$ of pure dimension $d$ and degree $\delta$. We denote by $T_0,\ldots,T_n$ and $s_0,\ldots,s_n$ the same bases as those in \S \ref{change of coordinate: multiple by H}. Let $\mathbf a=(a_1,\ldots,a_n)\in k^n$, and we consider the morphism
\[\begin{array}{rrcl}
T_{\mathbf a}:&\mathbb P(V)&\longrightarrow&\mathbb P(V)\\
&[T_0:T_1:\cdots:T_n]&\mapsto&[T_0:T_1+a_1T_0:\cdots:T_n+a_nT_0],
\end{array}\]
and we denote $X'=T_{\mathbf a}(X)$. 

Let $\psi_X\in\sym^\delta\left(\bigwedge^{d+1}V^\vee\right)$, which defines the Cayley form of $X$ in $\mathbb P\left(\bigwedge^{d+1}V^\vee\right)$. With the bases of $H^0\left(\mathbb P(V),\O_{\mathbb P(V)}(1)\right)$ fixed above, we write
\[\psi_X=\sum_{J\in \mathcal I}a_J(s_{j_0}\wedge\cdots\wedge s_{j_d})^{J},\]
where $\mathcal I$ denotes the index set of all $(j_0,\ldots,j_d)$ satisfying $0\leqslant j_0<\cdots<j_d\leqslant n$. Then by Proposition \ref{Cayley form commmutes with automorphism} again, we have
\[\psi_{X'}=\sum_{J\in \mathcal I_1}a_J\left((s_0+a_1s_1+\cdots+a_ns_n)\wedge s_{j_1}\wedge\cdots\wedge s_{j_d}\right)^{J}+\sum_{J\in \mathcal I_2}a_J(s_{j_0}\wedge\cdots\wedge s_{j_d})^{J},\]
where $\mathcal I_1$ denotes the index set of all $(j_0,\ldots,j_d)$ satisfying $j_0=0$ and $1\leqslant j_1<\cdots<j_d\leqslant n$, and $\mathcal I_2$ denotes the index set of all $(j_0,\ldots,j_d)$ satisfying $1\leqslant j_0<\cdots<j_d\leqslant n$.

Let $\psi_{X,\delta}$ (\resp $\psi_{X',\delta}$) be the sum of monomials in $\psi_X$ (\resp $\psi_{X'}$) whose total degree of the variables with the form $s_0\wedge s_{i_1}\wedge\cdots\wedge s_{i_{d}}$ is exactly $\delta$, where $1\leqslant i_1<\cdots<i_{d}\leqslant n$. Then by the above calculation, we have
\[\psi_{X,\delta}=\psi_{X',\delta}.\]
\subsection{Height of Cayley forms}
By \cite[Theorem 4.3.2]{BGS94}, for a pure dimensional projective variety over a number field, its height defined by arithmetic intersection theory is more or less equal to the height of its Chow variety or Cayley variety. If we consider a Weil height of the Cayley variety or Cayley form, it can be compared with the height of variety defined by the arithmetic intersection theory uniformly and explicitly. In some sense, they are same height functions of varieties. 

We have the following result on the comparison of the height of $X$ and that of $\psi_X$.
\begin{prop}[Proposition 3.7, \cite{Liu-reduced}]\label{compare arakelov height and chow form height}
Let $K$ be a number field, and $\overline{\sE}$ be the Hermitian vector bundle of rank $n+1$ over $\spec\O_K$ defined in \S \ref{O{n+1} with l2-norm}. 
Let $X$ be a closed subscheme of $\mathbb P(\sE_K)$ of pure dimension $d$ and degree $\delta$, and $\psi_X\in\sym^\delta\left(\bigwedge^{d+1}\sE_K^\vee\right)$ be an element which defines its Cayley variety. Then we have
\begin{eqnarray*}
& &-\frac{1}{2}\left(\log((N(n,d)+1)(\delta+1))+\delta\mathcal H_{N(n,d)}\right)\leqslant h(\psi_X)-h_{\overline{\O(1)}}(X)\\
&\leqslant&(N(n,d)+1)\delta\log2+4\delta\log(N(n,d)+1)-\frac{1}{2}\delta\mathcal H_{N(n,d)},
\end{eqnarray*}
where $h_{\overline{\O(1)}}(X)$ is defined in \S \ref{def of Arakelov height} with $\overline{\O(1)}$ equipped with the corresponding Fubini--Study metrics, and $N(n,d)={n+1\choose d+1}-1$, $\mathcal H_N=1+\cdots+\frac{1}{N}$.
\end{prop}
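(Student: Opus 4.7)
The plan is to factor the comparison $h(\psi_X)-h_{\overline{\O(1)}}(X)$ through an intermediate Arakelov height of the Cayley hypersurface $\widetilde{X}\subset\mathbb P\bigl(\bigwedge\nolimits^{d+1}\sE_K^\vee\bigr)$, splitting the estimate into two independent pieces: an identity relating the heights of $X$ and $\widetilde{X}$, and a purely linear-algebraic norm comparison on a finite-dimensional vector space at each place.

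First I would exploit the incidence construction of \S\ref{construction of Cayley variety}. The variety $\widetilde{\Gamma}\subset\mathbb P(\sE_K)\times_K\mathbb P\bigl(\bigwedge\nolimits^{d+1}\sE_K^\vee\bigr)$ together with its projections $p$ and $q$ extends to a flat model over $\spec\O_K$ and inherits natural Hermitian metrics from those on $\overline{\sE}$. A projection-formula computation along $q$, in the spirit of \cite[Theorem~4.3.2]{BGS94} but phrased in Pl\"ucker coordinates, should identify $h_{\overline{\O(1)}}(X)$ with the Arakelov height $h_{\overline{\O(1)}}(\widetilde{X})$, where the line bundle on the target carries the Fubini--Study metric induced by the Hermitian structure that $\bigwedge\nolimits^{d+1}\sE_K^\vee$ inherits from $\overline{\sE}$. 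Because $\widetilde{X}$ is the hypersurface cut out by $\psi_X$, this height in turn equals the Arakelov height of the point $[\psi_X]\in\mathbb P\bigl(\sym^\delta(\bigwedge\nolimits^{d+1}\sE_K^\vee)\bigr)$ endowed with its Fubini--Study line bundle.

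Next I would carry out the comparison between the Fubini--Study height of $[\psi_X]$ and its naive coefficient-sup height. At the finite places the two norms agree (both are model norms), so the content is at the archimedean places. There the task reduces to comparing, on the space of homogeneous forms of degree $\delta$ in $N(n,d)+1$ variables, the Hermitian norm in which monomials are orthogonal with squared lengths involving the multinomial weights $\binom{\delta}{I}^{-1}$, against the $\ell^\infty$-norm on coefficients in the same basis. A Cauchy--Schwarz-type bound yields one direction with a loss of order $\tfrac{1}{2}\log\bigl((N(n,d)+1)(\delta+1)\bigr)$, essentially the square root of the number of monomials, and a contribution $\tfrac{1}{2}\delta\mathcal H_{N(n,d)}$ coming from averaging the logarithms of the multinomial coefficients. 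The reverse direction absorbs the explicit $(N(n,d)+1)\delta\log 2$ and $4\delta\log(N(n,d)+1)$ terms, which bound the maximal size of the relevant sum of multinomial coefficients by elementary estimates.

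I expect the main obstacle to be the first step: a clean statement of the identity between $h_{\overline{\O(1)}}(X)$ and the Arakelov height of the Cayley hypersurface requires careful bookkeeping of metrics on $\widetilde{\Gamma}$, proper-versus-flat base-change compatibilities of $\widehat{c}_1$, and, most delicately, the archimedean Mahler-type correction terms arising from integrating along the fibres of $q$. Once that dictionary is in place, the second step is essentially combinatorial and reduces to familiar multinomial identities.
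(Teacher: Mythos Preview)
The paper does not supply a proof of this proposition here; it is quoted verbatim from \cite[Proposition~3.7]{Liu-reduced}. So there is no in-paper argument to compare against, only the reference to the original source and to \cite[Theorem~4.3.2]{BGS94}. Your two-step plan---pass from $h_{\overline{\O(1)}}(X)$ to an Arakelov height of the Cayley hypersurface $\widetilde{X}$ in $\mathbb P\bigl(\bigwedge^{d+1}\sE_K^\vee\bigr)$, then compare that height with the naive sup-norm height of the defining form $\psi_X$---is exactly the route taken in the cited literature, so in outline you are on the right track.

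There is one misattribution worth flagging. Notice that the term $-\tfrac{1}{2}\delta\mathcal H_{N(n,d)}$ appears with the \emph{same sign} in both the upper and lower bounds. This means it does not arise from a two-sided norm comparison in your step~2; rather it is an \emph{exact} correction coming from step~1, namely the BGS identity relating $h_{\overline{\O(1)}}(X)$ to the $L^2$/Mahler height of its Chow (or Cayley) form. Your step~1 is therefore not an equality $h_{\overline{\O(1)}}(X)=h_{\overline{\O(1)}}(\widetilde X)$ but an equality up to precisely this harmonic-number term (this is the ``Mahler-type correction'' you allude to at the end, but you should not also claim the same $\mathcal H_{N(n,d)}$ contribution from multinomial averaging in step~2). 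Step~2 is then responsible only for the asymmetric pieces: $\tfrac{1}{2}\log\bigl((N(n,d)+1)(\delta+1)\bigr)$ on one side, and $(N(n,d)+1)\delta\log 2+4\delta\log(N(n,d)+1)$ on the other, which are genuine sup-versus-$L^2$ bounds on the monomial basis. Once you reassign the harmonic term correctly, your sketch matches the argument in \cite{Liu-reduced}.
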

\section{Affine coverings}\label{chap. 4}
In \cite[Proposition 4.2.1]{CCDN2020} and \cite[Theorem 5.14]{ParedesSasyk2022}, an upper bound of the number of integral points with bounded height in an affine hypersurface embedded into $\mathbb A^n$ is considered. This idea is original from Ellenberg--Venkatesh \cite{Ellenberg-Venkatesh2005}. In this section, we will consider the same issue for an affine subvariety which is a hypersurface in a hyperplane of $\mathbb A^n$.

In \cite{CCDN2020,ParedesSasyk2022}, the obtained upper bounds depend on the height of the leading homogeneous part of the defining polynomial. Its role is replaced by that of its Cayley form studied in \S \ref{Chap. Cayley form} in this generalization. Compared with \cite[Theorem 5.14]{ParedesSasyk2022}, we consider a less general set of integral points with bounded height here. 

Compared with \cite[Proposition 4.2.1]{CCDN2020} and \cite[Theorem 5.14]{ParedesSasyk2022}, our results provide a more explicit dependence on the dimension and the base field.
\subsection{A naive upper bound of the number of integral points}
We begin by setting up the foundational framework for this subject; some ideas are inspired by \cite[\S 5B]{ParedesSasyk2022}.
\subsubsection{}
Let $K$ be a number field, and $x\in\O_K$. We define
\[[x]=\max_{v\in M_{K,\infty}}\{|x|_v^{[K_v:\Q_v]}\},\]
where we consider a complex place and its conjugation as two different places. Let $B\in\mathbb R_{\geqslant1}$, and we define the set
\[[B]_{\O_K}=\{x\in\O_K\mid [x]\leqslant B\}.\]

For every point $\xi\in\mathbb P^n(K)$ with $H_K(\xi)\leqslant B$, by the 2nd Minkowski's theorem (cf. \cite[\S 4.3]{Samuel-NT}), $\xi$ can be lift to $(y_0,\ldots,y_n)\in\mathbb A^{n+1}(\O_K)$ which lies in the set $[m_1(K)B]_{\O_K}^{n+1}$, where
\begin{equation}\label{constant from 2nd Minkowski theorem}
m_1(K)=\begin{cases}
\left(\frac{4}{\pi}\right)^{r_2}\frac{[K:\Q]!}{[K:\Q]^{[K:\Q]}}|\Delta_K|^{1/2},\text{ the class number of $K$ is larger than $1$}\\
1,\text{ the class number of $K$ is $1$}
\end{cases}
\end{equation}
is a constant depending only on $K$ with the complex conjugation number $r_2$, and the discriminant $\Delta_K$ of $K$. 
\subsubsection{}\label{affine open subset of P(E)}
Let $\overline{\sE}$ be the Hermitian vector bundle defined in \S \ref{O{n+1} with l2-norm} over $\spec\O_K$. We choose $T_0,\ldots,T_n$ as a basis of $H^0\left(\mathbb P(\sE_K), \O_{\mathbb P(\sE_K)}(1)\right)$, where $T_i$ corresponds to the $i$-th coordinate of $\O_K^{\oplus(n+1)}$, $i=0,\ldots,n$. 

Let $\mathbb A_0(\sE_K)$ be the affine open set of $\mathbb P(\sE_K)$, which is defined by $T_0\neq0$ in $\mathbb P(\sE_K)$. In this case, its coordinates are $T_1,\ldots,T_n$, and we let $T_0=1$ when we write the coordinate of a closed point. We will omit the coordinate of $T_0$ if there is no ambiguity. 

Let $X$ be a subscheme of $\mathbb A_0(\sE_K)$, $B\in\mathbb R$, and 
\[[\![B]\!]_{\O_K}^{n}=\max_{v\in M_{K,\infty}}\{(x_1,\ldots,x_n)\in\mathbb A^{n}(\O_K)\mid \sqrt{|x_1|_v^2+\cdots+|x_{n}|_v^2}\leqslant B\}.\]
We denote this set by \[s(X;B)=X(K)\cap [\![B]\!]_{\O_K}^{n}\] when the embedding of $X$ into $\mathbb A^{n}$ is clear from context.
\subsubsection{}\label{trivial bound of integral points of affine variety}
First, we introduce a naive upper bound of integral point in an affine scheme, and see \cite[Lemma 4.1.1]{CCDN2020} as a reference. It can be proved directly by reduction, similar to the proof of \cite[Theorem 3.1]{Browning-PM277}.

Let $X$ be a closed subscheme of $\mathbb A_0(\sE_{K})$ of pure dimension $d$, which is not necessarily irreducible, and suppose its projective closure in $\mathbb P(\sE_K)$ is of degree $\delta$. Let $B\in\mathbb R$, and we denote by $s(X(\mathbb Z);B)$ the subset of $\mathbb Z^n$, where for all $\xi=(x_1,\ldots,x_n)\in s(X(\mathbb Z);B)$, it satisfies 
\[\max\{|x_1|,\ldots,|x_n|\}\leqslant\sqrt{|x_1|^2+\cdots+|x_n|^2}\leqslant B,\]
and it defines a closed point of $X$. Then we have
\begin{equation}\label{trivial upper bound of integral point in X}
\#s(X(\mathbb Z);B)\leqslant \delta(2B+1)^d\leqslant 3^d\delta B^d
\end{equation}
when $B\geqslant1$, and see \cite[\S 2]{Heath-Brown}, \cite[Lemma 4.1.1]{CCDN2020} for a proof.
\subsubsection{}
We consider the construction in \S \ref{change of coordinate: translation by a}. Let $T_0,\ldots,T_n\in H^0\left(\mathbb P(\sE_K),\O(1)\right)$ be a family of coordinate functions on $\mathbb P(\sE_K)$, and $s_0,\ldots,s_n$ be the dual basis of $T_0,\ldots,T_n$. For an $\mathbf{a}=(a_1,\ldots,a_n)\in\mathbb Z^n$, let 
\[T_{\mathbf{a}}:\mathbb P(\sE_K)\rightarrow\mathbb P(\sE_K)\]
be the morphism defined in \S \ref{change of coordinate: translation by a} with respect to the above coordinate, where we consider $\mathbf a$ as an element in $K^n$. Let $X$ be a closed subscheme of $\mathbb P(\sE_K)$ of pure dimension $d$ and degree $\delta$, and $X'=T_{\mathbf a}(X)$. 

Let $\psi_X\in\sym^\delta_K\left(\bigwedge^{d+1}\sE_K^\vee\right)$ be the Cayley form of $X$. We write 
\[\psi_X=\sum_{i=0}^\delta\psi_{X,i},\]
where the total degree of the variables $\left(s_0\wedge s_{i_1}\wedge\cdots\wedge s_{i_d}\right)_{1\leqslant i_1<\cdots<i_d\leqslant n}$ in $\psi_{X,i}$ is exactly $i$ with $i=0,\ldots,\delta$.

We take the same notations for $X'$, and then we have $\psi_{X'}=\psi(s_0+a_1s_1+\cdots+a_ns_n,s_1,\ldots,s_n)$ by \S \ref{change of coordinate: translation by a}. Let 
\[\psi_{X'}=\sum_{i=0}^\delta \psi_{X',i},\]
with the same arrangement as above.

We have the following property about $\psi_X$ and $\psi_{X'}$ introduced above, which is useful for a further application.
\begin{lemm}\label{existence of small height integral points}
We keep all the above notations and constructions. If $\psi_{X,\delta}\neq0$ and $\psi_{X,0}\equiv0$, then there exists an element $(a_1,\ldots,a_n)\in\mathbb Z^n$ with $\max\limits_{1\leqslant i\leqslant n}\{|a_i|\}\leqslant\delta$, such that $\psi_{X',0}$ is not zero.
\end{lemm}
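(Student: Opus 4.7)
The plan is to translate the vanishing of $\psi_{X',0}$ into a geometric condition on $X$, and then use the trivial bound for integral points on $X$ to find a suitable $\mathbf{a}$. I would first consider the sub-Grassmannian $\check G_1 \subset \check G = \Gr(d+1,\sE_K^\vee)$ cut out by the vanishing of all Pl\"ucker coordinates of the form $s_0 \wedge s_{j_1}\wedge\cdots\wedge s_{j_d}$; by definition it parameterizes the $(d+1)$-dimensional subspaces of $\sE_K^\vee$ contained in $\mathrm{span}(s_1,\ldots,s_n)$, equivalently the $(n-d-1)$-dimensional linear subvarieties of $\mathbb P(\sE_K)$ passing through the point $p_0 := [1:0:\cdots:0]$. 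Since $\psi_{X',0}$ is the residue of $\psi_{X'}$ after killing the Pl\"ucker coordinates involving $s_0$, the hypothesis $\psi_{X',0}\equiv 0$ as an element of $\sym^\delta\bigl(\bigwedge^{d+1}\sE_K^\vee\bigr)$ forces $\psi_{X'}$ to vanish as a function on $\check G_1$; by the defining property of the Cayley variety in Proposition \ref{structure of cayley form}, this means every linear subvariety parametrized by $\check G_1$ meets $X' = T_{\mathbf a}(X)$.

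Next, I would show that this condition forces $p_0 \in X'$. Suppose for contradiction that $p_0 \notin X'$; then the linear projection $\pi\colon\mathbb P(\sE_K)\dashrightarrow\mathbb P^{n-1}$ from $p_0$ restricts to a morphism $X' \to \mathbb P^{n-1}$ whose image has dimension at most $d$. The $(n-d-1)$-dimensional linear subvarieties through $p_0$ correspond bijectively under $\pi$ to $(n-d-2)$-dimensional linear subvarieties of $\mathbb P^{n-1}$, and since $d + (n-d-2) = n-2 < n-1$, a generic such linear subvariety of $\mathbb P^{n-1}$ avoids $\pi(X')$, producing a linear subvariety through $p_0$ disjoint from $X'$, contradicting the first step. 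Applying $T_{\mathbf a}^{-1}$ then translates $p_0 \in X'$ into $[1:-a_1:\cdots:-a_n] \in X$.

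Finally, I would count: the map $\mathbf a \mapsto [1:-a_1:\cdots:-a_n]$ embeds $\{-\delta,\ldots,\delta\}^n$ as $(2\delta+1)^n$ distinct integer points in the affine chart $\mathbb A_0(\sE_K)$, and by the trivial bound \eqref{trivial upper bound of integral point in X} with $B=\delta$, at most $\delta(2\delta+1)^d$ of these points lie on $X$. Since $X$ is a hypersurface in a linear subvariety of $\mathbb P(\sE_K)$, one has $d \leq n-1$, hence $(2\delta+1)^{n-d}\geq 2\delta+1 > \delta$ and $(2\delta+1)^n > \delta(2\delta+1)^d$. Therefore some $\mathbf a \in \mathbb Z^n$ with $\max_i|a_i|\leq\delta$ produces a point outside $X$, and by the contrapositive of the preceding two paragraphs I conclude $\psi_{X',0}\neq 0$ for this choice of $\mathbf a$.

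The hard part will be the dimension-theoretic projection argument, specifically establishing the existence of a generic missing linear subvariety of the correct dimension in $\mathbb P^{n-1}$. The edge case $d=n-1$ must be treated separately: here $\check G_1$ is just the single point $\{p_0\}$, so the claim ``every linear subvariety through $p_0$ meets $X'$'' reduces directly to $p_0\in X'$, bypassing projection. The hypothesis $\psi_{X,\delta}\neq 0$ does not enter essentially in this approach: it is equivalent to $X\not\subseteq\{T_0=0\}$ and follows automatically from $\psi_{X,0}\equiv 0$, since by the same geometric analysis applied to $X$ itself the latter forces $p_0\in X$, and $p_0\notin\{T_0=0\}$.
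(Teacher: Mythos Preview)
Your proof is correct and takes a genuinely different route from the paper's. The paper argues purely algebraically: it regards each Pl\"ucker-monomial coefficient of $\psi_{X',0}$ as a polynomial in $a_1,\ldots,a_n$ of degree at most $\delta$, asserts (from $\psi_{X,\delta}\neq 0$) that at least one such coefficient polynomial is nonzero, and then applies the trivial bound \eqref{trivial upper bound of integral point in X} to that single hypersurface in $\mathbb A^n$. You instead unwind the geometric meaning of the Cayley form: $\psi_{X',0}\equiv 0$ forces every $(n-d-1)$-plane through $p_0$ to meet $X'$, hence $p_0\in X'$ by projection, hence $[1:-a_1:\cdots:-a_n]\in X$, and you then apply the trivial bound to $X$ itself. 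Both arguments terminate with the same integer-point count in the box $[-\delta,\delta]^n$, but on different auxiliary varieties. Your approach is more conceptual and, as you note, shows that the hypothesis $\psi_{X,\delta}\neq 0$ is not actually needed for the conclusion; the paper's approach is shorter but leaves the step ``$\psi_{X,\delta}\neq 0$ implies some coefficient polynomial is nonzero'' without a detailed justification---a step your geometric argument in fact supplies. Your projection argument and the edge case $d=n-1$ are handled correctly; the ``hard part'' you flag is standard dimension counting (a general $(n-d-2)$-plane in $\mathbb P^{n-1}$ misses a closed set of dimension $\leq d$).
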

\begin{proof}
We consider all the monomials in $\psi_{X',0}$, whose coefficients are polynomials of variables $a_1,\ldots,a_n$ of degree smaller than $\delta$. By the fact that $\psi_{X,\delta}$ is not zero, at least one of these polynomials of coefficients is a non-zero polynomial of $a_1,\ldots,a_n$. By the upper bound of integral points of bounded norm provided in \S \ref{trivial bound of integral points of affine variety}, there exists an element $(a_1,\ldots,a_n)\in\mathbb Z^n$ with $\max\limits_{1\leqslant i\leqslant n}\{|a_i|\}\leqslant\delta$ such that the coefficient of a fixed monomial is not zero. Then we have the assertion.
\end{proof}
\subsection{A control of affine hypersurface coverings}
In this part, we give a control of affine auxiliary hypersurface which covers the integral points of bounded height, which is deduced from Theorem \ref{global determinant of hypersurface in a linear subvariety}. This is a generalization of \cite[Proposition 4.2.1]{CCDN2020} and \cite[Theorem 5.14]{ParedesSasyk2022}, where they consider the case of hypersurfaces in a projective space only. 

Our method is more similar to that of \cite[Proposition 4.2.1]{CCDN2020}, since we allow only one auxiliary hypersurface to cover all integral points of bounded height. In addition, our version is more explicit. In order to treat the case that the ring of integers is possible not to be a principle ideal domain, some techniques are inspired by \cite[Theorem 5.14]{ParedesSasyk2022}. 
\subsubsection{}\label{constructions of affine hypersurface and its Cayley form}
Let $\overline{\sE}$ be the Hermitian vector bundle over $\spec\O_K$ defined in \S \ref{O{n+1} with l2-norm}, and $\mathbb A_0(\sE_K)$ be the affine open subset of $\mathbb P(\sE_K)$ introduced in \S \ref{affine open subset of P(E)} with respect to the coordinates $T_0,\ldots,T_n$. Let $X'$ be a geometrically integral closed subscheme of $\mathbb A_0(\sE_K)$, whose projective closure $X$ in $\mathbb P(\sE_K)$ is a hypersurface in a linear subvariety of $\mathbb P(\sE_K)$, and $\dim(X)=d$, $\deg(X)=\delta$. We denote by $s_0,\ldots,s_n$ the dual basis of $T_0,\ldots,T_n$. 

For the above variety $X$, let $\psi_X\in\sym^\delta_K\left(\bigwedge^{d+1}\sE_K^\vee\right)$ be its Cayley form, and we write 
\[\psi_X=\sum_{i=0}^\delta\psi_{X,i},\]
where the total degree of the variables $\left(s_0\wedge s_{i_1}\wedge\cdots\wedge s_{i_d}\right)_{1\leqslant i_1<\cdots<i_d\leqslant n}$ in $\psi_{X,i}$ is exactly $i$ with $i=0,\ldots,\delta$.

By the above formulation, our main target is the control of the set $s(X';B)$ defined in \S \ref{affine open subset of P(E)}. 
\subsubsection{}
First we introduce some lemmas below on elementary calculations. The first one is an explicit version of \cite[Lemma 4.2.3]{CCDN2020} and \cite[Lemma 5.13]{ParedesSasyk2022} over a number field.
\begin{lemm}\label{increasing and decreasing of logx/x}
We keep all the notations in \S \ref{constructions of affine hypersurface and its Cayley form}. For all real number $y$ satisfying $1\leqslant y\leqslant H_K(\psi_X)$, we have 
\[\frac{\delta^2b'(\psi_X)}{H_K(\psi_X)^{\frac{1}{d\delta^{1+1/d}}}}\leqslant C_2(n,d,K)e\frac{\log y+\delta^2}{y^{\frac{1}{d\delta^{1+1/d}}}},\]
where
\begin{eqnarray*}
C_2(n,d,K)&=&\exp\left(2\epsilon_2(K)-3\log3+[K:\Q]\right)\Big(3+\log{N(n,d)+\delta\choose \delta} \\
& &+(N(n,d)+1)\log2+4\log(N(n,d)+1)+\log3-\frac{1}{2}\mathcal H_{N(n,d)}\Big),
\end{eqnarray*}
$N(n,d)={n+1\choose d+1}-1$, $H_K(\psi_{X})$ is defined at \eqref{naive height of a polynomial or hypersurface}, and $\mathcal H_n=1+\cdots+\frac{1}{n}$. 
\end{lemm}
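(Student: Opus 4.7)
The plan is to combine the arithmetic bound on $b'(\psi_X)=b'(\mathscr X)$ from Proposition \ref{estiamte of b'(x)} with the height comparison in Proposition \ref{compare arakelov height and chow form height}, reducing the problem to an elementary real-analysis estimate for the auxiliary function $f(y)=(\log y+\delta^2)/y^{\alpha}$ with $\alpha=1/(d\delta^{1+1/d})$. Observe that the left-hand side is independent of $y$, so the statement is really a uniform bound: once one produces an estimate $\mathrm{LHS}\leq C_2\cdot f(H_K(\psi_X))$, it remains to show $f(H_K(\psi_X))\leq e\cdot f(y)$ for every $y\in[1,H_K(\psi_X)]$.

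For the arithmetic part, I would apply Proposition \ref{estiamte of b'(x)} to bound $\delta^2 b'(\mathscr X)$ by an affine function of the Arakelov height $h_{\overline{\O(1)}}(X)$, and then Proposition \ref{compare arakelov height and chow form height} to pass from $h_{\overline{\O(1)}}(X)$ to $h(\psi_X)=[K:\Q]^{-1}\log H_K(\psi_X)$, with additive error explicit in $n$, $d$, $\delta$. Gathering the various constants into $C_2(n,d,K)$ as in the statement (in particular absorbing the term $\log\binom{N(n,d)+\delta}{\delta}$ produced by the height comparison and the bound on $\mathcal{Q}'$), I would obtain
\[
\delta^2 b'(\psi_X)\leq C_2(n,d,K)\bigl(\log H_K(\psi_X)+\delta^2\bigr),
\]
whence dividing by $H_K(\psi_X)^\alpha$ yields $\mathrm{LHS}\leq C_2\cdot f(H_K(\psi_X))$.

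The analytic step is to verify $f(H_K(\psi_X))\leq e\cdot f(y)$ for all $y\in[1,H_K(\psi_X)]$. The function $f$ is unimodal on $[1,\infty)$ with a unique maximum at $y^\star=\exp(\alpha^{-1}-\delta^2)$, so its minimum on the subinterval is attained at an endpoint. The inequality is trivial at $y=H_K(\psi_X)$ and (by unimodality) for $y\in[y^\star,H_K(\psi_X)]$, so the issue reduces to the endpoint $y=1$, where $f(1)=\delta^2$; this reduces further to the inequality $\log H+\delta^2\leq e\,\delta^2 H^\alpha$ for $H\geq 1$. Setting $\phi(H):=e\delta^2 H^\alpha-\log H-\delta^2$, one checks $\phi(1)=(e-1)\delta^2>0$ and analyses $\phi'(H)=H^{-1}(e\alpha\delta^2 H^\alpha-1)$ to track its behavior.

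The principal difficulty is this last analytic inequality. The constant $e$ is essentially forced by the exponential scaling of $f$ near $y^\star$, and the delicate regime is the one in which $\alpha\delta^2=\delta^{1-1/d}/d$ is small (large $d$ compared with $\delta$), where $\phi$ may develop an interior minimum; in that range one must invoke the size of $C_2(n,d,K)$, which itself grows in $\delta$ through the factor $\log\binom{N(n,d)+\delta}{\delta}$, to close the estimate. Once this elementary inequality is secured, combining it with the bound produced in the arithmetic step gives the claim in one line.
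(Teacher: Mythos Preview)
Your approach is the paper's: apply Proposition \ref{estiamte of b'(x)} to obtain $\delta^2 b'(\psi_X)\le C_2(n,d,K)\bigl(h(\psi_X)+\delta^2\bigr)$, then invoke the unimodality of $x\mapsto(\log x)/x^{\alpha}$ on $[1,\infty)$ to pass from $H_K(\psi_X)$ down to an arbitrary $y\in[1,H_K(\psi_X)]$. One small difference: the paper does not appeal to Proposition \ref{compare arakelov height and chow form height} separately at this point; the constants occurring in $C_2$ are taken directly from the display in Proposition \ref{estiamte of b'(x)}, and $h(\psi_X)$ is written in place of $h_{\overline{\O(1)}}(X)$ without further comment.

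Your treatment of the analytic step is in fact more scrupulous than the paper's own, which simply asserts the final inequality ``by the fact $1\le y\le H_K(\psi_X)$'' after recalling the shape of $(\log x)/x^{\alpha}$. You are right that the worst case is the endpoint $y=1$, which reduces to $1/(\alpha e)\le (e-1)\delta^2$ with $\alpha=1/(d\delta^{1+1/d})$, i.e.\ $\alpha\delta^2=\delta^{1-1/d}/d\ge 1/\bigl(e(e-1)\bigr)$. However, your proposed remedy for the regime where $\alpha\delta^2$ is small cannot work as written: $C_2(n,d,K)$ is a common multiplicative factor on both sides of the reduction $f(H)\le e\,f(y)$, so its growth in $\delta$ gives no leverage on that ratio. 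In the situations the paper actually uses (curves and surfaces, $d\le 2$) one has $\alpha\delta^2\ge 1/2>1/\bigl(e(e-1)\bigr)$ for every $\delta\ge 1$, and the endpoint inequality is immediate; this is the implicit computation behind the paper's terse justification.
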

\begin{proof}
By an elementary calculation, for a fixed $a\in\mathbb R_{>0}$, the function
\[\begin{array}{rcl}
\mathbb R_{\geqslant1}&\longrightarrow&\mathbb R\\
x&\mapsto&\frac{\log x}{x^a}
\end{array}\]
is increasing when $x\in]1,e^{1/a}]$, and is decreasing when $x\in[e^{1/a},+\infty[$. In addition, it meets its maximal value at $x=e^{1/a}$, which is $\frac{1}{ae}$. 

By Proposition \ref{estiamte of b'(x)}, we have
\[\delta^2b'(\psi_X)\leqslant C_2(n,d,K)\left(h(\psi_X)+\delta^2\right),\]
since we always have $b'(\psi_X)\geqslant1$. Then by the fact $1\leqslant y\leqslant H_K(\psi_X)$, we have 
\begin{eqnarray*}
\frac{\delta^2b'(\psi_X)}{H_K(\psi_X)^{\frac{1}{d\delta^{1+1/d}}}}&\leqslant&C_2(n,d,K)\frac{h(\psi_X)+\delta^2}{H_K(\psi_X)^{\frac{1}{d\delta^{1+1/d}}}}\\
&\leqslant&C_2(n,d,K)e\frac{y+\delta^2}{y^{\frac{1}{d\delta^{1+1/d}}}},
\end{eqnarray*}
which terminates the proof. 
\end{proof}

In order to treat the analogue of height in an affine open subset, we introduce a height function in the affine case below. 
\begin{defi}\label{definition of affine height}
Let $x=(x_1,\ldots,x_n)\in K^n$. We define the \textit{affine height} of $x$ as
\[H_{K,\mathrm{aff}}(x)=\prod_{v\in M_{K,\infty}}\max_{1\leqslant i\leqslant n}\{|x_i|^{[K_v:\Q_v]}\}\text{, and }h_{\mathrm{aff}}(x)=\frac{\log H_{K,\mathrm{aff}}(x)}{[K:\Q]}.\]
Moreover, let 
\[f(T_0,\ldots,T_n)=\sum_{i_0+\cdots+i_n=\delta}a_{i_0\ldots i_n}T_0^{i_0}\cdots T_n^{i_n}, \]
be a polynomial. We define
\[H_{K,\mathrm{aff}}(f)=\prod_{v\in M_{K,\infty}}\max_{i_0+\cdots+i_n=\delta}\{|a_{i_0\ldots i_n}|^{[K_v:\Q_v]}\}\text{, and }h_{\mathrm{aff}}(f)=\frac{\log H_{K,\mathrm{aff}}(f)}{[K:\Q]}.\]
\end{defi}

In order to study the Cayley form of the Zariski closure of a pure dimensional closed subscheme of $\mathbb P(\sE_K)$ embedded into $\mathbb P(\sE)$. We have the following result inspired by the proof of \cite[Theorem 5.14]{ParedesSasyk2022}.
\begin{lemm}\label{existence of integral Cayley form with bounded height}
Let $K$ be a number field, $\overline{\sE}$ be the Hermitian vector bundle over $\spec\O_K$ defined in \S \ref{O{n+1} with l2-norm}, and $X$ be a closed subscheme of $\mathbb P(\sE_K)$ of pure dimension $d$ and degree $\delta$, which is a hypersurface in a linear subvariety of $\mathbb P(\sE_K)$. Let $\psi_X\in\sym_K^{\delta}\left(\bigwedge^{d+1}\sE_K^\vee\right)$ be the Cayley form of $X$. Then there exists an element $\Psi_X\in\sym_{\O_K}^{\delta}\left(\bigwedge^{d+1}\sE^\vee\right)$ which defines $\psi_X$ over $K$, and it satisfies
\[H_K(\psi_X)\leqslant H_{K,\mathrm{aff}}(\Psi_X)\leqslant m_1(K)^{\delta(n-d-1)+1} H_K(\psi_X),\]
where the constant $m_1(K)$ depending only on $K$ is introduced at \eqref{constant from 2nd Minkowski theorem}.
\end{lemm}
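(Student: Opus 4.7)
The plan is to construct $\Psi_X$ explicitly using the structure of $X$ as a hypersurface in a linear subvariety, applying the 2nd Minkowski theorem to each defining equation separately and assembling the Cayley form via the resultant description of \S\ref{Cayley form of complete intersection}. The lower inequality is immediate from integrality; the substance of the lemma is the upper inequality.

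For the lower inequality, any $\Psi_X\in\sym^\delta_{\O_K}(\bigwedge^{d+1}\sE^\vee)$ representing $\psi_X$ has coefficients in $\O_K$, so at each finite place $v\in M_{K,f}$ the maximum of $|a|_v$ over its coefficients is at most $1$. Hence the non-archimedean factors of $H_K(\Psi_X)=H_K(\psi_X)$ contribute at most $1$ to the product, giving $H_K(\psi_X)\leq H_{K,\mathrm{aff}}(\Psi_X)$.

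For the upper inequality, I would write $X$ as the intersection of a linear subvariety $L$ of dimension $d+1$, cut out by linear forms $\ell_1,\ldots,\ell_{n-d-1}\in\sE_K^\vee$, with a hypersurface of degree $\delta$ defined by $f\in\sym^\delta_K(\sE_K^\vee)$. Apply the 2nd Minkowski theorem separately to each $\ell_i$, viewed as a projective point in $\mathbb P(\sE_K^\vee)$, and to $f$, viewed as a projective point in $\mathbb P(\sym^\delta_K(\sE_K^\vee))$, to obtain integer representatives satisfying $H_{K,\mathrm{aff}}(\ell_i)\leq m_1(K)H_K(\ell_i)$ and $H_{K,\mathrm{aff}}(f)\leq m_1(K)H_K(f)$. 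By \S\ref{Cayley form of complete intersection}, the Cayley form of $X$ is, up to a scalar in $K^\times$, the resultant
\[
\Psi_X=\operatorname{Res}(\ell_1,\ldots,\ell_{n-d-1},f,h_1,\ldots,h_{d+1})
\]
viewed as a polynomial in the Plücker coordinates of generic linear forms $h_j$. Since the resultant formula is defined over $\mathbb Z$, one has $\Psi_X\in\sym^\delta_{\O_K}(\bigwedge^{d+1}\sE^\vee)$. Its multi-homogeneity—degree $\delta$ in the coefficients of each $\ell_i$, degree $1$ in those of $f$, and degree $\delta$ in the Plücker variables—together with a term-by-term bound on archimedean norms and the Minkowski estimates, yields the explicit bound
\[
H_{K,\mathrm{aff}}(\Psi_X)\leq C\cdot m_1(K)^{\delta(n-d-1)+1}\prod_{i=1}^{n-d-1}H_K(\ell_i)^\delta\cdot H_K(f),
\]
where $C=C(n,d,\delta)$ is a combinatorial constant counting the monomials appearing in the resultant expansion; the exponent $\delta(n-d-1)+1$ matches exactly the total multi-degree of the resultant in the coefficients of the $\ell_i$'s and $f$.

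The main obstacle is then to dominate $C\cdot\prod_iH_K(\ell_i)^\delta\cdot H_K(f)$ by $H_K(\psi_X)$, which amounts to comparing the height of the Cayley form to the suitably normalized heights of its defining equations. This comparison can be extracted by combining Proposition \ref{compare arakelov height and chow form height}, which relates $h(\psi_X)$ to the Arakelov height $h_{\overline{\O(1)}}(X)$, with an arithmetic Bézout-type inequality applied to the complete-intersection decomposition $X=V(\ell_1)\cap\cdots\cap V(\ell_{n-d-1})\cap V(f)$, after choosing the $\ell_i$'s and $f$ as primitive integral representatives of minimal height. Handling the combinatorial constant $C$ so that it can be absorbed into the stated exponential form of the bound is the delicate bookkeeping step that I expect to require the most care.
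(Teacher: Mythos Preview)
Your construction of $\Psi_X$ via Minkowski lifts of $f$ and the $\ell_i$ followed by the resultant is exactly the paper's approach, and the lower inequality is handled correctly. The gap is in your final step, where you try to pass through the quantity $C\cdot\prod_i H_K(\ell_i)^\delta\cdot H_K(f)$ and then invoke an arithmetic B\'ezout inequality to dominate it by $H_K(\psi_X)$. This is both unnecessary and in the wrong direction: arithmetic B\'ezout bounds $h_{\overline{\O(1)}}(X)$ (hence $h(\psi_X)$) \emph{from above} by a combination of the heights of the defining equations, so it would give $H_K(\psi_X)\leqslant C'\prod_i H_K(\ell_i)^\delta\cdot H_K(f)$, not the reverse. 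There is also no mechanism to absorb the combinatorial constant $C$ into the clean bound $m_1(K)^{\delta(n-d-1)+1}$ as stated.

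The point you are missing is that $\Psi_X$ is a \emph{scalar multiple} of any $K$-representative of $\psi_X$: by the multi-homogeneity of the resultant (degree $1$ in the coefficients of $f$, degree $\delta$ in those of each $\ell_i$), one has $\Psi_X=c_0\cdot c_1^\delta\cdots c_{n-d-1}^\delta\cdot\operatorname{Res}(f,\ell_1,\ldots,\ell_{n-d-1})$. In particular $H_K(\Psi_X)=H_K(\psi_X)$, so the whole question reduces to bounding the ratio $H_{K,\mathrm{aff}}(\Psi_X)/H_K(\Psi_X)$, which is precisely the norm of the content ideal of $\Psi_X$. Since each lift $c_0f$, $c_i\ell_i$ has content of norm at most $m_1(K)$ by the Minkowski choice, the multi-homogeneity gives content norm at most $m_1(K)^{1+\delta(n-d-1)}$ for $\Psi_X$. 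No heights of the individual $\ell_i$ or $f$, no combinatorial constant, and no B\'ezout step are needed.
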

\begin{proof}
By definition, $X$ is a complete intersection of an element $f\in \sym_K^{\delta}(\sE_K)$, and $n-d-1$ elements in $\sE_K$ noted by $\ell_1,\ldots,\ell_{n-d-1}$. By the 2nd Minkowski's theorem (cf. \cite[\S 4.3]{Samuel-NT}), there exist constants $c_0,c_1,\ldots,c_{n-d-1}\in\O_K$ such that $c_0f\in \sym_{\O_K}^{\delta}(\sE)$ and $c_1\ell_1,\ldots,c_{n-d-1}\ell_{n-d-1}\in\sE$, which satisfy 
\[H_K(f)\leqslant H_{K,\mathrm{aff}}(c_0f)\leqslant m_1(K)H_K(f),\]
and
\[H_K(\ell_i)\leqslant H_{K,\mathrm{aff}}(c_i\ell_i)\leqslant m_1(K)H_K(\ell_i)\text{ for }i=1,\ldots,n-d-1.\]

We consider the resultant of $c_0f,c_1\ell_1,\ldots,c_{n-d-1}\ell_{n-d-1}$, which defines a form $\Psi_X\in\sym_{\O_K}^{\delta}\left(\bigwedge^{d+1}\sE^\vee\right)$. By the estimates of the constants $c_0,c_1,\ldots,c_{n-d-1}$, we have the assertion. 
\end{proof}
Let $\Psi_X\in\sym_{\O_K}^{\delta}\left(\bigwedge^{d+1}\sE^\vee\right)$ be the one determined in the statement of Lemma \ref{existence of integral Cayley form with bounded height}. We write 
\begin{equation}\label{decomposition by degree of integral coefficient Cayley form}
\Psi_X=\sum_{i=0}^\delta\Psi_{X,i},
\end{equation}
where $\Psi_{X,i}$ denotes the sum of monomials in $\Psi_X$ whose total degree of the variables $(s_0\wedge s_{i_1}\wedge\cdots\wedge s_{i_d})_{1\leqslant i_1<\cdots< i_d\leqslant n}$ in $\Psi_{X,i}$ is exactly $i$ with $i=0,\ldots,\delta$. 

Let $R\geqslant2$. By Bertrand's postulate of number fields (cf. \cite{Lagarias-Odlyzko} or \cite[Th\'eor\`eme 2]{Serre-postulat}), for a number field $K$, there is a constant $\alpha(K)$ depending on $K$, such that there always exists a prime ideal $\p$ satisfying 
\begin{equation}\label{constant alpha(K) in Bertrand's postulate}
N(\p)\in]R/\alpha(K),R].
\end{equation}
This fact will be applied in the proof of the theorem below. 
\subsubsection{}
In order to construct an affine hypersurface to cover the integral points of bounded height in the varieties described in \S \ref{constructions of affine hypersurface and its Cayley form}, we have the following result. 
\begin{theo}\label{control of integral points}
We keep all the notations and constructions in \S \ref{constructions of affine hypersurface and its Cayley form}. We suppose $\psi_{X,\delta}\neq0$. In this case, the set $s(X';B)$ can be covered by a hypersurface in $\mathbb A_0(\sE_K)$ defined by a polynomial of degree $\omega$ which does not contain the generic point of $X'$, and we have
\[\omega\leqslant C_3(n,d,K)\delta^{2-\frac{1}{d}} B^{\frac{1}{\delta^{1/d}}}\frac{\min\{h(\psi_{X,\delta})+\delta\log B+\delta^2,b'(X)\}}{H_K(\psi_{X,\delta})^{\frac{1}{d\delta^{1+1/d}}}},\]
where the constant $C_3(n,d,K)$ depending only on $n,d,K$ is
\begin{eqnarray*}
C_3(n,d,K)&=&\exp(C_1(n,d,K))2^{\frac{N(n,d)+1}{d}}(N(n,d)+1)^{4/d}e^{2d\mathcal H_{N(n,d)}}e^{(d+1)/e}\\
& &\cdot B(n,d,K)^{d+1}C_2(n,d,K)\alpha(K)^{\frac{1}{d}},
\end{eqnarray*}
$N(n,d)={n+1\choose d+1}-1$, $H_K(\psi_{X,\delta})$ is the naive height of the polynomial $\psi_{X,\delta}$ at \eqref{naive height of a polynomial or hypersurface}, the constant $C_1(n,d,K)$ is introduced in Theorem \ref{global determinant of hypersurface in a linear subvariety}, the constant $C_2(n,d,K)$ is introduced in Lemma \ref{increasing and decreasing of logx/x}, the constant $B(n,d,K)$ depending only on $n,d,K$ in determined in \eqref{control of B by distribution of prime ideals}, and $\alpha(K)$ is the constant from ertrand's postulate of $K$ determined in \eqref{constant alpha(K) in Bertrand's postulate}. 
\end{theo}
\begin{proof}
For an $L\in\O_K$, we consider the morphism
\[\begin{array}{rrcl}
F_L:&\mathbb P(\sE_K)&\longrightarrow&\mathbb P(\sE_K)\\
&[x_0:x_1:\cdots:x_n]&\mapsto&[x_0:Lx_1:\cdots:Lx_n]
\end{array}\]
with respect to the projective coordinate described above, and we consider the scheme $F_L(X)$. By the notations and arguments in \S \ref{change of coordinate: multiple H}, the Cayley form $\psi_{F_L(X)}$ of $F_L(X)$ is
\[\psi_{F_L(X)}=\sum_{i=0}^\delta L^i\psi_{X,i}.\]

\textbf{Case I. --} If there exists a constant $B(n,d,K)$ depending on $n,d,K$, such that $B\leqslant \frac{1}{\delta}B(n,d,K)$, then $B^{1/d\delta^{1/d}}\leqslant \sqrt[e]{e}B(n,d,K)$ is bounded by a constant depending only on $n,d,K$, and so is $B^{(d+1)/d\delta^{1/d}}$. We would like to remind the readers that the above constant $B(n,d,K)$ and the restriction of $B$ are obtained from a calculation in Case II-2 of this proof below. 

By Theorem \ref{global determinant of hypersurface in a linear subvariety} combined with the comparison of heights in Proposition \ref{compare arakelov height and chow form height}, there exists a hypersurface defined by a homogeneous polynomial $G(T_0,\ldots,T_n)$ in $\mathbb P(\sE_K)$ of degree at most
\begin{eqnarray*}
& &\exp(C_1(n,d,K))2^{\frac{N(n,d)+1}{d}}(N(n,d)+1)^{4/d}e^{2d\mathcal H_{N(n,d)}}B^{\frac{d+1}{d\delta^{1/d}}}\delta^{4-1/d}\frac{b'(\psi_X)}{H_K(\psi_X)^{\frac{1}{d\delta^{1+1/d}}}}\\
&\leqslant&\exp(C_1(n,d,K))2^{\frac{N(n,d)+1}{d}}(N(n,d)+1)^{4/d}e^{2d\mathcal H_{N(n,d)}}B^{\frac{d+1}{d\delta^{1/d}}}e^{(d+1)/e}B(n,d,K)^{d+1}\\
& &\cdot B^{\frac{1}{\delta^{1/d}}}\delta^{4-1/d}\frac{b'(\psi_X)}{H_K(\psi_X)^{\frac{1}{d\delta^{1+1/d}}}}
\end{eqnarray*}
which covers $S(X;B)$, but does not contain the generic point of $X$. Since $H_K(\psi_X)\geqslant H_{K}(\psi_{X,\delta})\geqslant1$, then by the calculation in Lemma \ref{increasing and decreasing of logx/x} and the upper bound of $b'(X)$ in Proposition \ref{estiamte of b'(x)}, we have
\[
\frac{\delta^2b'(\psi_X)}{H_K(\psi_X)^{\frac{1}{d\delta^{1+1/d}}}}\leqslant C_2(n,d,K)e\frac{\min\{h(\psi_{X,\delta})+\delta^2,\delta^2b'(\psi_X)\}}{H_{K}(\psi_{X,\delta})^{\frac{1}{d\delta^{1+1/d}}}}.
\]
Then the affine hypersurface defined by $G(1,T_1,\ldots,T_n)$ in $\mathbb A_0(\sE_K)$ satisfies the requirement. 

\textbf{Case II. --} We choose a $\O_K$-model $\mathscr X'$ of $X$ in $\mathbb P(\sE)$ defined by some $\O_K$-coefficients equations whose Cayley form is denoted by $\Psi_X$, and we require 
\[H_K(\psi_X)\leqslant H_{K,\mathrm{aff}}(\Psi_X)\leqslant m_1(K)^{\delta(n-d-1)+1} H_K(\psi_X),\]
whose existence is assured by Lemma \ref{existence of integral Cayley form with bounded height}. See Definition \ref{definition of affine height} for the definition of $H_{K,\mathrm{aff}}(\ndot)$. 

Let $B\geqslant2$, and $\alpha(K)$ be the constant in \eqref{constant alpha(K) in Bertrand's postulate}. Based on Bertrand's postulate of number fields and the fact stated above, we have the following arguments. 

\textbf{Case II-1. --} If there exists a prime ideal $\p\in \O_K$ such that $N(\p)\in]B/\alpha(K),B]$ and $\p\nmid \Psi_{X,0}$, then there exists a non-zero element $L\in\p$, such that $N(\p)\leqslant N(L)\leqslant m_1(K)N(\p)$, where the constant $m_1(K)$ from the 2nd Minkowski's theorem is introduced in \eqref{constant from 2nd Minkowski theorem}. 

In this situation, we apply Theorem \ref{global determinant of hypersurface in a linear subvariety} to $F_L(X)$, and then there exists a hypersurface in $\mathbb P(\sE_K)$ of degree smaller than
\[\exp(C_1(n,d,K))2^{\frac{N(n,d)+1}{d}}(N(n,d)+1)^{4/d}e^{2d\mathcal H_{N(n,d)}}B^{\frac{d+1}{d\delta^{1/d}}}\delta^{4-1/d}\frac{b'(\psi_{F_L(X)})}{H_K\left(\psi_{F_L(X)}\right)^{\frac{1}{d\delta^{1+1/d}}}},\]
which covers $S(F_L(X);B)$ but does not contain the generic point of $F_L(X)$. We denote by $G(T_0,\ldots,T_n)$ the homogeneous polynomial defining this hypersurface. 

Since $\psi_{X,\delta}\neq0$, then by an elementary calculation, we have
\[H_K\left(\psi_{F_L(X)}\right)\geqslant N(L)^{\delta}H_K(\psi_{X,\delta})\geqslant\alpha(K)^{-\delta}B^\delta H_K(\psi_{X,\delta}).\]

From the above estimates, we obtain
\[\frac{\delta^2b'(\psi_{F_L(X)})}{H_K\left(\psi_{F_L(X)}\right)^{\frac{1}{d\delta^{1+1/d}}}}\leqslant C_2(n,d,K)e\left(\frac{\alpha(K)}{B}\right)^{\frac{1}{d\delta^{1/d}}}\frac{h(\psi_{X,\delta})+\delta^2+\delta\log B}{H_K(\psi_{X,\delta})^{\frac{1}{d\delta^{1+1/d}}}}\]
from Lemma \ref{increasing and decreasing of logx/x}. 

In addition, for $\psi_{F_{L}(X),0}=\psi_{X,0}$ and $\Psi_{F_{L}(X),0}=\Psi_{X,0}$ by \S \ref{change of coordinate: multiple H}, then by the restriction of $\p$ and $L\in\p$, we obtain 
\[b'(\psi_X)\leqslant b'(\Psi_{F_L(X)})\leqslant b'(\psi_{X})\exp\left(\frac{\log (m_1(K)^{\delta(n-d-1)+1}N(L))}{m_1(K)^{\delta(n-d-1)+1}N(L)}\right)\leqslant b'(\psi_{X})e^{1/e}\]
by an elementary calculation. Then we have
\begin{eqnarray*}
& &\frac{\delta^2b'(\psi_{F_L(X)})}{H_K\left(\psi_{F_L(X)}\right)^{\frac{1}{d\delta^{1+1/d}}}}\\
&\leqslant&\alpha(K)^{\frac{1}{d}}C_2(n,d,K)eB^{-\frac{1}{d\delta^{1/d}}}\frac{\min\{h(\psi_{X,\delta})+\delta^2+\delta\log B,b'(\psi_X)\}}{H_K(\psi_{X,\delta})^{\frac{1}{d\delta^{1+1/d}}}}
\end{eqnarray*}
based on the above calculation. Then the hypersurface in $\mathbb A_0(\sE_K)$ defined by the equation $F_L(G(L,t_1,\ldots,t_n))$ satisfies our requirement.

\textbf{Case II-2. --} Now we suppose that $B>2$, and for all the prime ideal $\p$ whose norm lies in the interval $]B/\alpha(K),B]$, we always have $\p\mid \Psi_{X,0}$. Then we have
\[\left(\prod_{\begin{subarray}{c}\p\in\spm\O_K\\ N(\p)\in]B/\alpha(K),B]\end{subarray}}\p\right)\mid \Psi_{X,0}.\]
\begin{itemize}
\item If $\psi_{X,0}\neq0$, we have
\[\frac{1}{[K:\Q]}\sum_{\begin{subarray}{c}\p\in\spm\O_K\\ N(\p)\in]B/\alpha(K),B]\end{subarray}}\log N(\p)\leqslant h(\psi_{X,0})+(\delta(n-d-1)+1)m_1(K).\]

By the control of points with small height from Proposition \ref{naive siegal lemma}, the comparison of the heights $h_{\overline{\O(1)}}(X)$ and $h(\psi_X)$ from Proposition \ref{compare arakelov height and chow form height}, and the fact $h(\psi_X)\geqslant h(\psi_{X,0})$, we obtain the required result if
\begin{eqnarray*}
& &\frac{1}{[K:\Q]}\sum_{\begin{subarray}{c}\p\in\spm\O_K\\ N(\p)\in]B/\alpha(K),B]\end{subarray}}\log N(\p)\\
&>&\delta(d+1)\left(\log B-B_1(d)+\frac{7}{2}\log(n+1)+\frac{N+1}{d+1}\log2\right)\\
& &+4\delta\log(N+1)-\frac{\delta}{2}\mathcal H_N+(\delta(n-d-1)+1)m_1(K)\\
\end{eqnarray*}
is valid.
 
After the above argument, we only need to deal with the case of
\begin{eqnarray*}
& &\frac{1}{[K:\Q]}\sum_{\begin{subarray}{c}\p\in\spm\O_K\\ N(\p)\in]B/\alpha(K),B]\end{subarray}}\log N(\p)\\
&\leqslant&\delta(d+1)\left(\log B-B_1(d)+\frac{7}{2}\log(n+1)+\frac{N+1}{d+1}\log2\right)\\
& &+4\delta\log(N+1)-\frac{\delta}{2}\mathcal H_N+(\delta(n-d-1)+1)m_1(K).
\end{eqnarray*}
By \eqref{implicit estimate of theta_K}, we have
\begin{eqnarray*}
\sum_{\begin{subarray}{c}\p\in\spm\O_K\\ N(\p)\in]B/\alpha(K),B]\end{subarray}}\log N(\p)&\geqslant& \left(1-\frac{1}{\alpha(K)}\right)B-\epsilon_1(K,B)+\epsilon_1\left(K,\frac{B}{\alpha(K)}\right)\\
&\geqslant&\left(1-\frac{1}{\alpha(K)}-2\kappa_1(K)\right)B,
\end{eqnarray*}
where $\kappa_1(K)\geqslant0$ is a constant depending only on $K$ introduced in \eqref{kappa_1(K)}. There exists a constant $B(n,d,K)$ depending on $n,d,K$, such this inequality is verified when 
\begin{equation}\label{control of B by distribution of prime ideals}
B\leqslant \frac{1}{\delta}B(n,d,K).       
\end{equation}
This case has been treated in Case I, where the constant $B(n,d,K)$ in Case I is exactly same as the above one. 
\item If $\psi_{X,0}=0$, by Lemma \ref{existence of small height integral points}, there exists an element $\mathbf a=(a_1,\ldots,a_n)\in\mathbb Z^n$ with $\max\limits_{1\leqslant i\leqslant n}\{|a_i|\}\leqslant\delta$, such that $\psi_{T_{\mathbf a}(X),0}\neq0$, where $T_{\mathbf a}$ follows the definition in \S \ref{change of coordinate: translation by a}. In this case, we have $h(\psi_{X,\delta})=h(\psi_{T_{\mathbf a}(X),\delta})$, and $b'(\psi_{X})=b'(\psi_{T_{\mathbf a}(X)})$. Then we have the same assertion by applying the above arguments to $T_{\mathbf a}(X)$ and $B$ replaced by $B+\delta$.
\end{itemize}

To sum up, we have the assertion. 
\end{proof}
\begin{rema}
In Theorem \ref{global determinant of hypersurface in a linear subvariety} and Theorem \ref{control of integral points}, we require the research objects to be hypersurfaces in a linear subspace, instead of general varieties. In fact, the main obstruction is the study of uniform lower bound of the arithmetic Hilbert--Samuel function, and we only have the optimal dependence on the height of variety for this particular case. For general cases, we only have a worse estimate in \cite{David_Philippon99}, and see also \cite[\S 4]{Chen1}.
\end{rema}
\begin{rema}
In the proof of Theorem \ref{control of integral points}, we apply the estimate of the distribution of prime ideals in \eqref{implicit estimate of theta_K}. If we admit the Generalized Riemann Hypothesis, we will have more explicit estimates by \cite{grenie2016explicit}, and then we obtain a more explicit dependence on the number field $K$ in Theorem \ref{global determinant of hypersurface in a linear subvariety}. If we work over $\Q$ or a totally imaginary field, we will have such an explicit estimate not necessarily to assume the Generalized Riemann Hypothesis, and see \cite[Theorem 2]{grzeskowiak2017explicit}. This role of the Generalized Riemann Hypothesis has been considered in \cite[\S 7]{Liu2022d}. 
\end{rema}
\subsection{Some useful direct consequences}
There are some direct consequences of Theorem \ref{control of integral points}, which will be useful in the further study. In this part, we keep all the notations in Theorem \ref{control of integral points}. 
\subsubsection{}
First, we provide a direct consequence of Theorem \ref{control of integral points} which is easier to apply later. 
\begin{coro}\label{improved version of the control of integral points}
We keep the same notations and conditions in Theorem \ref{control of integral points}. The set $s(X';B)$ can be covered by a hypersurface in $\mathbb A_0(\sE_K)$ of degree $\omega$, and 
\[\omega\leqslant C_4(n,d,K)\delta^{3-\frac{1}{d}} \frac{B^{\frac{1}{\delta^{1/d}}}\max\{\log B,\delta\}}{H_K(\psi_{X,\delta})^{\frac{1}{d\delta^{1+1/d}}}}\]
which does not contain the generic point of $X'$, where the constant
\begin{eqnarray*}
C_4(n,d,K)&=&C_3(n,d,K)C_2(n,d,K)ed(d+1)\\
& &+\Big(B_1(d)-\frac{7}{2}\log(n+1)+\frac{N(n,d)+1}{d+1}\log2\\
& &+\frac{4}{d+1}\log(N(n,d)+1)-\frac{1}{2(d+1)}\mathcal H_{N(n,d)}\Big)
\end{eqnarray*}
depends only on $n,d,K$, $C_2(n,d,K)$ is defined in Lemma \ref{increasing and decreasing of logx/x}, $C_3(n,d,K)$ is defined in Theorem \ref{control of integral points}, $B_1(d)$ is defined at \eqref{lower bound of arithmetic Hilbert-Samuel}, and $N(n,d)={n+1\choose d+1}-1$. In addition, if we suppose $\psi_{X,\delta}$ is absolutely irreducible, then we have
\[\omega\leqslant C_4(n,d,K)\delta^3B^{\frac{1}{\delta^{1/d}}}.\]
\end{coro}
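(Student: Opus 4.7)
The proof adapts the strategy of Corollary \ref{global determinant of hypersurface in a linear subvariety with height term} from the projective to the affine setting. Starting from the bound of Theorem \ref{control of integral points}, the plan is to control the $\min$-numerator uniformly via a dichotomy based on Proposition \ref{naive siegal lemma}. In the ``large height'' regime, where
\[\frac{\log B}{[K:\Q]} < \frac{h_{\overline{\O(1)}}(X)}{(d+1)\delta} + B_1(d) - \frac{7}{2}\log(n+1),\]
Proposition \ref{naive siegal lemma} yields a projective hypersurface of degree at most $\delta$ covering $S(X;B)$, whose restriction to $\mathbb A_0(\sE_K)$ covers $s(X';B)$; hence $\omega \leqslant \delta$. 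In the complementary regime, $h_{\overline{\O(1)}}(X)$ is bounded above linearly in $\log B/[K:\Q]$, and via Proposition \ref{compare arakelov height and chow form height} the same bound transfers to $h(\psi_X) \geqslant h(\psi_{X,\delta})$. This gives the key estimate
\[h(\psi_{X,\delta}) + \delta\log B + \delta^2 \leqslant C'(n,d,K)\, \delta\, \max\{\log B, \delta\},\]
where $C'$ collects precisely the additive terms $B_1(d)$, $\tfrac{N(n,d)+1}{d+1}\log 2$, $\tfrac{4}{d+1}\log(N(n,d)+1)$ and $-\tfrac{\mathcal H_{N(n,d)}}{2(d+1)}$ visible in the statement of $C_4$. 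Substituting into Theorem \ref{control of integral points} then yields the first claimed bound, with the prefactor $C_3 C_2 e d(d+1)$ in $C_4$ tracing back to the Lemma \ref{increasing and decreasing of logx/x}-style estimates already packaged inside the proof of Theorem \ref{control of integral points}.

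For the absolutely irreducible refinement, I would use the $b'(X)$ branch of the $\min$ in Theorem \ref{control of integral points} rather than the first branch. Proposition \ref{estiamte of b'(x)} provides $b'(X) = O_{n,K}(\max\{\delta^{-2}h(\psi_X), 1\})$, and the elementary inequality $\log y/y^a \leqslant 1/(ae)$ from the proof of Lemma \ref{increasing and decreasing of logx/x} then bounds the ratio $b'(X)/H_K(\psi_{X,\delta})^{1/(d\delta^{1+1/d})}$ by a constant multiple of $\delta^{-1+1/d}$. Absolute irreducibility of $\psi_{X,\delta}$ is used to rule out degenerate reduction configurations that would otherwise inflate $b'(X)$ or decouple $H_K(\psi_X)$ from $H_K(\psi_{X,\delta})$. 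Combining with the $\delta^{2-1/d} B^{1/\delta^{1/d}}$ prefactor in Theorem \ref{control of integral points} then produces the simplified bound $\omega \leqslant C_4 \delta^3 B^{1/\delta^{1/d}}$.

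The main technical obstacle will be verifying that the ``large height'' regime of the first assertion is consistent with the claimed bound: the Proposition \ref{naive siegal lemma} threshold is phrased in terms of $h_{\overline{\O(1)}}(X)$, while the denominator of the corollary's right-hand side involves $H_K(\psi_{X,\delta})$; the three heights $h_{\overline{\O(1)}}(X)$, $h(\psi_X)$, and $h(\psi_{X,\delta})$ can differ substantially, so checking that $\omega \leqslant \delta$ fits under $C_4 \delta^{3-1/d} B^{1/\delta^{1/d}} \max\{\log B, \delta\}/H_K(\psi_{X,\delta})^{1/(d\delta^{1+1/d})}$ requires a careful combination of Proposition \ref{compare arakelov height and chow form height}, the inequality $h(\psi_{X,\delta}) \leqslant h(\psi_X)$, and the elementary estimates $B \geqslant 1$ and $\max\{\log B, \delta\} \geqslant \delta$ to absorb residual factors into the constant.
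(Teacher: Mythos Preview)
Your approach to the first assertion is essentially the paper's: a dichotomy on the Proposition \ref{naive siegal lemma} threshold (the paper writes it directly in terms of $h(\psi_X)$ by absorbing the Proposition \ref{compare arakelov height and chow form height} constants, you phrase it via $h_{\overline{\O(1)}}(X)$ and then convert, which is equivalent), followed by the bound $h(\psi_{X,\delta})\leqslant h(\psi_X)\ll_{n,d}\delta\max\{\log B,\delta\}$ in the small-height regime and substitution into Theorem \ref{control of integral points}.

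For the absolutely irreducible refinement, however, your sketch has a gap. You correctly observe that Proposition \ref{estiamte of b'(x)} gives $b'(\psi_X)\ll\max\{\delta^{-2}h(\psi_X),1\}$, but then you want to apply the $\log y/y^a$ inequality with $y=H_K(\psi_{X,\delta})$ in the denominator. These do not match: the numerator is governed by $h(\psi_X)$, the denominator by $h(\psi_{X,\delta})$, and the former can be much larger. Your phrase ``rule out degenerate reduction configurations that would decouple $H_K(\psi_X)$ from $H_K(\psi_{X,\delta})$'' acknowledges the issue but does not resolve it. The paper's precise mechanism is this: if $\psi_X$ has reducible reduction at $\mathfrak p$, then so does its top graded piece $\psi_{X,\delta}$ (taking leading terms in a graded factorization), hence $\mathcal Q'(\psi_X)\subseteq\mathcal Q'(\psi_{X,\delta})$ and therefore $b'(\psi_X)\leqslant b'(\psi_{X,\delta})$. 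Now Proposition \ref{estiamte of b'(x)} applied to $\psi_{X,\delta}$ itself gives $b'(\psi_{X,\delta})\ll\max\{\delta^{-2}h(\psi_{X,\delta}),1\}$, with numerator and denominator both in terms of $\psi_{X,\delta}$, and the $\log y/y^a$ estimate goes through cleanly. This inclusion of bad primes is the one step your proposal is missing.
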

\begin{proof}
If
\begin{eqnarray*}
\frac{\log B}{[K:\Q]}&<&\frac{h(\psi_X)}{(d+1)\delta}+B_1(d)-\frac{7}{2}\log(n+1)+\frac{N(n,d)+1}{d+1}\log2\\
& &+\frac{4}{d+1}\log(N(n,d)+1)-\frac{1}{2(d+1)}\mathcal H_{N(n,d)},
\end{eqnarray*}
then we have the assertion from Proposition \ref{naive siegal lemma} and the comparison of heights in Proposition \ref{compare arakelov height and chow form height} directly.

If
\begin{eqnarray*}
\frac{\log B}{[K:\Q]}&\geqslant&\frac{h(\psi_X)}{(d+1)\delta}+B_1(d)-\frac{7}{2}\log(n+1)+\frac{N(n,d)+1}{d+1}\log2\\
& &+\frac{4}{d+1}\log(N(n,d)+1)-\frac{1}{2(d+1)}\mathcal H_{N(n,d)},
\end{eqnarray*}
then
\begin{eqnarray*}
h(\psi_{X,\delta})&\leqslant&h(\psi_X)\leqslant\delta\Big(\frac{(d+1)\log B}{[K:\Q]}-(d+1)B_1(d)-\frac{7(d+1)}{2}\log(n+1)\\
& &+(N(n,d)+1)\log2+4\log(N(n,d)+1)-\frac{1}{2}\mathcal H_{N(n,d)}\Big).
\end{eqnarray*}
Same as the proof of Lemma \ref{increasing and decreasing of logx/x}, we have 
\[\delta^2b'(\psi_X)\leqslant C_2(n,d,K)\left(h(\psi_X)+\delta^2\right)\]
from Proposition \ref{estiamte of b'(x)}. Then from the upper bound of the degree of auxiliary hypersurface in Theorem \ref{control of integral points}, we obtain the required result by taking the above upper bound of $h(\psi_{X,\delta})$ into consideration.

In addition, if we suppose $\psi_{X,\delta}$ is absolutely irreducible, then the set of non-absolutely irreducible reductions of $\psi_X$ is a subset of those of $\psi_{X,\delta}$, so we have $b'(\psi_X)\leqslant b'(\psi_{X,\delta})$. Then we obtain
\begin{eqnarray*}
\omega&\leqslant& C_3(n,d,K)\delta^{2-\frac{1}{d}} B^{\frac{1}{\delta^{1/d}}}\frac{b'(X)}{H_K(\psi_{X,\delta})^{\frac{1}{d\delta^{1+1/d}}}}\\
&\leqslant& C_3(n,d,K)\delta^{2-\frac{1}{d}} B^{\frac{1}{\delta^{1/d}}}\frac{b'(\psi_{X,\delta})}{H_K(\psi_{X,\delta})^{\frac{1}{d\delta^{1+1/d}}}}\\
&\leqslant&C_4(n,d,K)\delta^3B^{\frac{1}{\delta^{1/d}}}
\end{eqnarray*}
by some elementary calculations, and we obtain the estimate. 
\end{proof}
\subsubsection{}
Next, we consider the case of dimension $1$ and $2$, which can be applied directly later. 
\begin{coro}\label{integral points in affine curve}
Let $X'$ be a geometrically integral curve in $\mathbb A_0(\sE_K)$, whose projective closure $X$ is a hypersurface of a plane in $\mathbb P(\sE_K)$ of degree $\delta$. Then we have
\[\#s(X';B)\leqslant C_4(n,1,K)\delta^3\frac{B^{1/\delta}\max\{\log B,\delta\}}{H_{K}(\psi_{X,\delta})^{1/\delta^2}}.\]
If we further suppose that $\psi_{X,\delta}$ is absolutely irreducible, then we have
\[\#s(X';B)\leqslant C_4(n,1,K)\delta^4B^{1/\delta},\]
where $\psi_{X,\delta}$ follows the notation in \S \ref{constructions of affine hypersurface and its Cayley form}, and the constant $C_4(n,1,K)$ is defined in Corollary \ref{improved version of the control of integral points}.
\end{coro}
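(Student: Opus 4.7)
The plan is to specialize Corollary \ref{improved version of the control of integral points} to the case $d=1$ and then apply B\'ezout's theorem in the intersection theory (cf.\ \cite[Proposition 8.4]{Fulton}) to convert the degree bound on the auxiliary hypersurface into a bound on the number of integral points.

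First I would invoke Corollary \ref{improved version of the control of integral points} with $d=1$ applied to the curve $X'\subset\mathbb A_0(\sE_K)$, whose projective closure $X$ is a hypersurface of degree $\delta$ in a plane in $\mathbb P(\sE_K)$. This produces an affine hypersurface $H\subset\mathbb A_0(\sE_K)$ defined by a polynomial of degree
\[\omega\leqslant C_4(n,1,K)\delta^{2}\,\frac{B^{1/\delta}\max\{\log B,\delta\}}{H_K(\psi_{X,\delta})^{1/\delta^2}}\]
which covers $s(X';B)$ but does not contain the generic point of $X'$. In particular, since $X'$ is geometrically integral (so its only codimension-zero subvariety is $X'$ itself) and $H$ does not contain $X'$, the scheme-theoretic intersection $X'\cap H$ has dimension zero.

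Next I would bound the cardinality of $s(X';B)\subset X'(K)\cap H(K)$ by the number of geometric points in $X'\cap H$. Since $X'\cap H$ is zero-dimensional and $X'$ has degree $\delta$, B\'ezout's theorem gives
\[\#(X'\cap H)(\overline K)\leqslant\deg(X')\cdot\deg(H)\leqslant\delta\cdot\omega.\]
Substituting the bound on $\omega$ yields the first inequality in the statement with the factor $\delta^3$.

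For the second assertion, when $\psi_{X,\delta}$ is absolutely irreducible, Corollary \ref{improved version of the control of integral points} furnishes the sharper bound $\omega\leqslant C_4(n,1,K)\delta^3 B^{1/\delta}$, and the same B\'ezout argument yields $\#s(X';B)\leqslant\delta\cdot\omega\leqslant C_4(n,1,K)\delta^4 B^{1/\delta}$. There is no substantial obstacle here: the only point worth checking is that one may indeed apply B\'ezout in $\mathbb A_0(\sE_K)$ to the projective closures of $X'$ and $H$, using that the hypersurface furnished by Corollary \ref{improved version of the control of integral points} does not contain the generic point of $X'$, so that no component of $X'$ is absorbed and the intersection is transverse enough for the degree product to serve as an upper bound.
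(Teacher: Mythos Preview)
Your proposal is correct and follows exactly the paper's own route: specialize Corollary~\ref{improved version of the control of integral points} to $d=1$ and then multiply by $\delta$ via B\'ezout (cf.\ \cite[Proposition~8.4]{Fulton}) to pass from the degree bound on the auxiliary hypersurface to a bound on $\#s(X';B)$. The paper additionally invokes the trivial fact $H_K(\psi_{X,\delta})\geqslant1$, but this plays no essential role in the argument as you have written it.
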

\begin{proof}
These two estimates are consequences of Corollary \ref{improved version of the control of integral points} for the case of $d=1$ combined with the B\'ezout's theorem in the intersection theory (cf. \cite[Theorem 8.4]{Fulton}), and the fact $H_K(\psi_{X,\delta})\geqslant1$.  
\end{proof}
\begin{coro}\label{integral points in affine surface}
Let $X$ be a geometrically integral surface in $\mathbb A_0(\sE_K)$, whose projective closure $X$ is a hypersurface of a hyperplane in $\mathbb P(\sE_K)$. If $\psi_{X,\delta}$ is absolutely irreducible, then $s(X';B)$ can be covered by a hyperplane whose projective closure is of degree smaller than
\[C_4(n,2,K)\delta^3B^{1/\sqrt{\delta}},\]
where the constant $C_4(n,2,K)$ follows the notation in Corollary \ref{improved version of the control of integral points}.
\end{coro}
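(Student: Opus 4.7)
The plan is to derive this as a direct specialization of Corollary \ref{improved version of the control of integral points} to the case $d=2$. The hypotheses of that corollary match up precisely: $X'$ is a geometrically integral closed subscheme of $\mathbb A_0(\sE_K)$ of dimension $2$, its projective closure $X$ is a hypersurface in a linear subvariety (here a hyperplane) of $\mathbb P(\sE_K)$, and the absolute irreducibility of $\psi_{X,\delta}$ is exactly the extra assumption that activates the improved (unconditional on height) bound in Corollary \ref{improved version of the control of integral points}.

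Concretely, I would apply the second statement of Corollary \ref{improved version of the control of integral points}, which asserts that under absolute irreducibility of $\psi_{X,\delta}$, the set $s(X';B)$ is covered by a hypersurface in $\mathbb A_0(\sE_K)$ of degree at most
\[C_4(n,d,K)\,\delta^{3}\,B^{1/\delta^{1/d}}\]
that does not contain the generic point of $X'$. Substituting $d=2$ collapses the exponent $1/\delta^{1/d}$ to $1/\sqrt{\delta}$, yielding the stated bound $C_4(n,2,K)\,\delta^{3}\,B^{1/\sqrt{\delta}}$.

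Since this is purely a specialization of an already established result, there is no genuine obstacle: the real work (the construction of the auxiliary hypersurface, the use of the Cayley form $\psi_{X,\delta}$ and the reduction argument controlling $b'(\psi_X) \leqslant b'(\psi_{X,\delta})$ under absolute irreducibility) was carried out in Theorem \ref{control of integral points} and refined into Corollary \ref{improved version of the control of integral points}. The only thing to be careful about is to invoke the absolutely irreducible variant so that the dependence on $H_K(\psi_{X,\delta})$ disappears and only $\delta^3 B^{1/\sqrt{\delta}}$ remains, which is what the statement requires.
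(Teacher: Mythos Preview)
Your proposal is correct and matches the paper's own proof, which simply states that the result is obtained from Corollary~\ref{improved version of the control of integral points} directly. You have correctly identified that this is the $d=2$ specialization of the absolutely irreducible case of that corollary, with the exponent $1/\delta^{1/d}$ becoming $1/\sqrt{\delta}$.
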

\begin{proof}
It is obtained from Corollary \ref{improved version of the control of integral points} directly.
\end{proof}

\section{Some preliminaries}\label{chap.5}
In general, it is difficult to study the uniform upper bound of $S(X;B)$ for arbitrary projective varieties $X$. However, using specific methods, one can often reduce the general case to special ones, and these special cases constitute an important subject of study. For some special cases, the related geometric and arithmetic properties will be involved in the study. 

This section provides preliminary results needed for our further study of uniform upper bounds for rational points of bounded height.

\subsection{Reduction to the case of affine surfaces with NCC condition}
In this part, we reduce our subject to a case of lower dimension with certain conditions. 
\subsubsection{}
For an arbitrary projective variety $X$ of dimension $d$ and degree $\delta$ embedded in $\mathbb P^n$, one can construct a birational projection $\mathbb P^n\dashrightarrow\mathbb P^{d+1}$ such that the restriction to $X$ is birational onto its image. Simultaneously, the change in the height of rational points on $X$ can be controlled uniformly in terms of $n$, $d$, and $\delta$. If we do not care the dependence of the uniform upper bound of $\#S(X;B)$ on $\delta$, we can study the case of projective hypersurface only. 

This idea originates in \cite{Bro_HeathB_Salb}; see \cite{Salberger07,Salberger_preprint2013,CCDN2020,ParedesSasyk2022,CDHNV2023} for further developments and applications.
\subsubsection{}
Let $f(T_0,\ldots,T_n)$ be an absolutely irreducible homogeneous polynomial of degree $\delta$ over a number field $K$, and it defines a hypersurface $X'$ in $\mathbb A_0(\sE_K')$, and a hypersurface $X$ in $\mathbb P(\sE_K)$, where the Hermitian vector bundle $\overline{\sE'}=(\O_K^{n+2},(\|\ndot\|_v)_{v\in M_{K,\infty}})$ on $\spec\O_K$ is equipped with $\ell^2$-norms for all $v\in M_{K,\infty}$. In this case, we have 
\[\#S(X;B)\leqslant\#s(X';B)\]
for a $B\in\mathbb R$, where $s(X';B)$ is from \S \ref{affine open subset of P(E)}. 

By the above argument, we only need to consider the case of geometrically integral hypersurfaces in $\mathbb A_0(\sE_K)$ whose projective closure in $\mathbb P(\sE_K)$ is of degree $\delta$. 
\subsubsection{}
In fact, not all affine hypersurfaces in $\mathbb A^{n+1}_K$ are relevant to this study. In \cite{CDHNV2023}, the authors clarify which kinds of hypersurfaces need to be taken into consideration meaningfully. For a self-contained treatment, we summarize parts of \cite{CDHNV2023}, particularly the arguments in \cite[\S 4]{CDHNV2023} concerning global fields.
\begin{defi}[NCC condition]\label{NCC condition}
Let $f\in K[T_1,\ldots,T_n]$ be a polynomial over a number field $K$, where $n\geqslant3$, and $X'$ be the affine hypersurface in $\mathbb A^n_K$ defined by $f$. The polynomial $f$ is called cylindrical over a curve if there exist a $K$-linear morphism \[\ell:\mathbb A^n_K\rightarrow\mathbb A^2_K\] and a curve $C$ in $\mathbb A^2_K$ such that $X'=\ell^*(C)$. We abbreviate not cylindrical over a curve by NCC and say that $X'$ is NCC if $f$ is.
\end{defi}
Note that $f\in K[T_1,\ldots,T_n]$ satisfies the NCC condition over $K$ if there do not exist $K$-linear forms $\ell_1(T_1,\ldots,T_n),\ell_2(T_1,\ldots,T_n)$ and a polynomial $g\in K[T_1,T_2]$ such that
\[f(T_1,\ldots,T_n)=g\left(\ell_1(T_1,\ldots,T_n),\ell_2(T_1,\ldots,T_n)\right).\]
\begin{defi}[$r$-irreducible]\label{r-irreducible}
Let $K$ be a number field, and $f\in K[T_1,\ldots,T_n]$. The polynomial $f$ is said to be $r$-irreducible over $K$ (or $r$-irreducible if there is no ambiguity on the base field), if $f$ does not have any factors of degree smaller than or equal to $r$ over $K$.
\end{defi}
We refer \cite[Corollary 4.20]{CDHNV2023} below, which allows us to cut the higher dimensional affine hypersurfaces with hyperplanes while preserving NCC and $2$-irreducibility via the effective Hilbert irreducibility results in \cite[\S 4.2]{CDHNV2023}. For a polynomial $f\in\O_K[T_1,\ldots,T_n]$, we denote by $V(f)$ the closed subscheme of $\mathbb A^n_K$ defined by $f$. 
\begin{prop}\label{reducing to lower dimensional case}
Let $f\in\O_K[T_1,\ldots,T_n]$ be a polynomial of degree $\delta\geqslant2$, where $n\geqslant4$. Assume that $f$ is NCC, and its highest homogeneous part $f_\delta$ is $r$-irreducible for some integer $r\geqslant 1$. Then exist the linearly independent linear forms $\ell,\ell'$ of height at most $O_n(\delta^3)$ and $t\in[O_n(\delta^3)]_{\O_K}$, such that $f_{\delta}|_{V(\ell-t\ell')}$ is $r$-irreducible, and such that there are at most $O_n(\delta)$ values $b\in K$ for which $f_{\delta}|_{V(\ell-t\ell'-b)}$ is not NCC. 
\end{prop}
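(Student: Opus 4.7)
The plan is to invoke the effective Hilbert irreducibility machinery of \cite[\S 4.2]{CDHNV2023} together with a generic position argument for the NCC condition. Both ingredients are already packaged in \cite[Corollary 4.20]{CDHNV2023}, so the task reduces to verifying that the hypotheses on $f$ (degree $\delta\geqslant 2$, NCC, highest homogeneous part $f_\delta$ which is $r$-irreducible, ambient dimension $n\geqslant 4$) match those required by the cited result and to tracing through the height bounds.

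The first step is to treat $r$-irreducibility. Consider the family of restrictions $f_\delta|_{V(\ell - t\ell')}$ parametrized by the linear forms $\ell,\ell'\in\O_K[T_1,\ldots,T_n]$ and by $t\in\O_K$. The locus of $(\ell,\ell',t)$ for which this restriction acquires a factor of degree $\leqslant r$ defines a proper closed subscheme in the parameter space, because generically the restriction of an $r$-irreducible homogeneous polynomial to a hyperplane remains $r$-irreducible (this uses $n\geqslant 4$, so that after cutting by a hyperplane we still sit in dimension $\geqslant 3$ where the relevant Bertini-type statement holds). The effective Hilbert irreducibility result of \cite[\S 4.2]{CDHNV2023}, applied to this family, produces an element $(\ell,\ell',t)$ outside the bad locus whose height is bounded by $O_n(\delta^3)$, controlled by the degrees of the defining equations of the bad locus.

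The second step handles the NCC condition. For the refined restriction $f_\delta|_{V(\ell-t\ell'-b)}$, failure of NCC is equivalent, by Definition~\ref{NCC condition}, to the existence of a pair of linear forms together with a bivariate polynomial reproducing this restriction, which again defines a constructible subvariety in the parameter $b\in\mathbb A^1_K$. A dimension count—combined with the fact that $f$ itself is NCC, so generic slices inherit NCC—shows that this subvariety has dimension $0$, and by the degree bounds in \cite{CDHNV2023} its cardinality is at most $O_n(\delta)$. The same $(\ell,\ell',t)$ produced in the first step can be chosen to avoid the finitely many irrelevant configurations, by a further application of the effective Hilbert argument, at the cost of inflating the height bound by a constant factor still absorbed into $O_n(\delta^3)$.

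The main obstacle is making the height bound $O_n(\delta^3)$ effective uniformly in $(\ell,\ell',t)$: one must estimate the degree and height of the ``bad locus'' of parameters for which either $r$-irreducibility of $f_\delta|_{V(\ell-t\ell')}$ fails or more than $O_n(\delta)$ values of $b$ yield non-NCC restrictions. Both quantities are controlled, as in \cite[\S 4]{CDHNV2023}, via the Chow form of the discriminantal varieties involved and the resultant-type bounds recalled in \S\ref{Chap. Cayley form}. Since these estimates are developed in detail in \cite{CDHNV2023}, we do not reproduce them; the statement of \cite[Corollary 4.20]{CDHNV2023} is the combination of these two ingredients and yields precisely the assertion.
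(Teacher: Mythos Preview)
Your proposal is correct and takes the same approach as the paper: the paper does not prove this proposition at all but simply records it as \cite[Corollary 4.20]{CDHNV2023}, and you likewise reduce everything to that citation. Your additional sketch of the two ingredients (effective Hilbert irreducibility for the $r$-irreducibility, and a dimension/degree count for the NCC locus) is consistent with the outline in \cite[\S 4]{CDHNV2023} but goes beyond what the paper itself supplies.
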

\subsubsection{}
We summarize the result which we will prove below. Let $K$ be a number field, and $f(t_1,t_2,t_3)$ be an absolutely polynomial of degree $\delta$. We suppose that the homogeneous degree $\delta$ part $f_\delta$ of $f$ is $2$-irreducible in Defintion \ref{r-irreducible}, and $f$ satisfies the NCC condition in Definition \ref{NCC condition}. Let $X'$ be the hypersurface defined by $f$ in $\mathbb A^3_K$. By Proposition \ref{reducing to lower dimensional case}, for fixed $\theta\in\mathbb R$, if we prove
\[\#s(X';B)\ll_{K,\delta,\epsilon}B^{\theta+\epsilon}\]
for all $\epsilon>0$, then for any hypersurface $X''$ of degree $\delta$ in $\mathbb A^n_K$, which satisfies the NCC condition and defined by a $2$-irreducible equation, we have
\[\#s(X'';B)\ll_{K,\delta,\epsilon}B^{\theta+n-3+\epsilon}.\]
In particular, if we can prove a uniform bound for the surfaces in $\mathbb A^3_K$ without the $\epsilon$ factor, then so is for the general hypersurface.

When we consider the case of $\delta=3$, the $1$-irreducibility can deduce the $2$-irreducibility directly. So we only need to suppose $f_\delta$ is absolutely irreducible in this case. 

\subsection{Remind of classification of cubic surfaces}
In this part, we list some useful geometric properties of cubic surfaces in $\mathbb P_k^3$ over an arbitrary field $k$. We refer the readers to \cite[Chapter 9]{Dolgachev_book} for a systematic introduction to this subject.
\subsubsection{}\label{summary: geometry of non-ruled cubic surface}
Let $X$ be a cubic surface in $\mathbb P^3_k$. If $X$ is not a ruled surface, then $X$ is normal, and it cannot be the cone of a curve. In addition, $X$ contains at most $27$ lines in $\mathbb P^3_k$. We refer the readers to \cite[\S 9.3]{Dolgachev_book} for the proof of this property, and more precise classification of normal cubic surfaces in $\mathbb P^3_k$.

We denote by $\Hilb^{2t+1}(X)$ the Hilbert scheme of conics in $\mathbb P^3_k$ lying in $X$, since it is well known that the Hilbert polynomial of such conic is $2t+1\in\mathbb Q[t]$. Each conic is obtained from an intersection of $X$ and a plane passing a line in $X$. In this case, we have $\dim(\Hilb^{2t+1}(X))=1$ and $\Hilb^{2t+1}(X)$ contains at most $27$ irreducible components.

\subsubsection{}\label{classification of skew cubic}
Let $X$ be a ruled cubic surface in $\mathbb P^3_k$. In this case, apart from the cones and the cylinders with directrix a plane cubic, the other ruled cubics, so-called skew ruled cubics, can all be defined in the following way: Given a conic $C$ located in a plane $P$, a line $L_1$, cutting $P$ at $O$, a homography between the points $A$ of $C$ and $B$ of $L_1$ such that $O$ is not a double point, the ruled cubic is the union of the lines $AB$.

By \cite[\S 9.2]{Dolgachev_book}, if $X$ is not a cone of a plane curve, it must not be normal. In addition, its singular locus is a line in $\mathbb P^3_k=\proj\left(k[T_0,T_1,T_2,T_3]\right)$, which is of dimension $1$.

If we suppose the singular locus is $T_0=T_1=0$ of multiplicity $2$, then $X$ is projectively equivalent to either
\[T_0^2T_2+T_1^2T_3=0\]
or
\[T_2T_0T_1+T_3T_0^2+T_1^3=0.\]
\subsubsection{}
By Proposition \ref{reducing to lower dimensional case}, if we want to study the cubic surfaces in $\mathbb A^3_K$ satisfying the NCC condition, we do not need to consider the case of cone. Instead, we only need to consider the case that the projective closure is either a non-ruled surface or a skew ruled surface, where the equation of definition is $2$-irreducible. 
\subsection{Geometry of conics in a cubic surface}\label{geometry of hilbert scheme of cubic}
In this part, we introduce some geometric properties of the Hilbert scheme of conics in a cubic surface in $\mathbb P^3$ over a field. Since it is important to control the contribution of rational points in conics, some related geometry properties need to be taken into consideration. We will always work over an arbitrary field $k$ of characteristic $0$ in this part. 
\subsubsection{}
Let $V$ be a $k$-vector space of dimension $4$. We consider the conics in $\mathbb P(V)$, where we require them to be absolutely irreducible. By \cite[\S 1.b]{Harris1982}, every conic $C$ is a regular curve lying in a plane in $\mathbb P(V)$, whose Hilbert polynomial with respect to the tautological bundle is $p_C(t)=2t+1\in\mathbb Q[t]$ by a direct calculation. We denote by $\Hilb^{2t+1}(\mathbb P(V))$ the Hilbert scheme of conics in $\mathbb P(V)$.

Let $S$ by the universal subbundle on $\mathbb P(V^\vee)$. Then $\Hilb^{2t+1}(\mathbb P(V))$ is the projectivization of the symmetric square $\sym^2(S^\vee)$. Briefly speaking, this Hilbert scheme is a $\mathbb P^5$-bundle on $\mathbb P(V^\vee)$. We refer the readers to \cite[Page 198]{ShafarevichAG1} and \cite[\S B.5]{Fulton} for more details on this subject, and see also \cite[\S 4]{Salberger_preprint2013}.
\subsubsection{}\label{geometry of Hilbert scheme in X}
We denote by $\mathscr E$ the universal family on $\Hilb^{2t+1}(\mathbb P(V))$, and $\pi:\mathscr E\rightarrow\Hilb^{2t+1}(\mathbb P(V))$ the structural morphism, which means for each $s\in\Hilb^{2t+1}(\mathbb P(V))(\overline k)$, the fiber $\mathscr E_s$ is the conic in $\mathbb P(V)$ corresponding to the point $s$. In other words, we have the following commutative diagram
\[\xymatrix{\relax \mathscr E_s \ar@{^{(}->}[r] \ar[d] &\mathscr E\ar[d]^{\pi}\ar@{^{(}->}[r]&\mathbb P(V)\times_k\Hilb^{2t+1}(\mathbb P(V))\ar[ld]\\
\spec \overline k \ar[r]^{s} &\Hilb^{2t+1}(\mathbb P(V))&}.\]

Let
\[\tau:\mathscr E\rightarrow \mathbb P(V)\]
be the composition of the embedding from $\mathscr E$ to $\mathbb P(V)\times_k\Hilb^{2t+1}(\mathbb P(V))$ and the canonical projection from $\mathbb P(V)\times_k\Hilb^{2t+1}(\mathbb P(V))$ to $\mathbb P(V)$, and let $M=\tau^*\O_{\mathbb P(V)}(1)$.

We consider
\[\pi:\mathscr E\longrightarrow\Hilb^{2t+1}(\mathbb P(V)),\]
whose relative dimension is $1$. By the functoriality of Deligne pairing, there exists a line bundle $N=\langle M,M\rangle$ on $\Hilb^{2t+1}(\mathbb P(V))$, where the Deligne pairing $\langle\ndot,\ndot\rangle$ is multilinear. We have the property below.
\begin{prop}
With all the above notations and constructions, we have $\deg(N|_s)=2$ for every $s\in\Hilb^{2t+1}(\mathbb P(V))(\overline k)$.
\end{prop}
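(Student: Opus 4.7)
The plan is to reduce the statement to a pointwise computation on a single conic by invoking the base-change compatibility of the Deligne pairing, and then use that the degree of $\O_{\mathbb P(V)}(1)$ restricted to any conic in $\mathbb P(V)$ equals $2$.

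First, I would fix a geometric point $s : \spec \overline k \to \Hilb^{2t+1}(\mathbb P(V))$ and form the Cartesian square
\[
\xymatrix{
\mathscr E_s \ar[r]^-{i_s} \ar[d]_-{\pi_s} & \mathscr E \ar[d]^-{\pi} \\
\spec \overline k \ar[r]^-{s} & \Hilb^{2t+1}(\mathbb P(V)).
}
\]
By the functoriality of the Deligne pairing recalled from \cite{YuanZhang2023}, the pairing $\langle\ndot,\ndot\rangle$ commutes with arbitrary base change along a proper flat morphism of relative dimension $1$, so one obtains a canonical isomorphism
\[
N|_s \;\cong\; \langle\, i_s^* M,\, i_s^* M\,\rangle_{\pi_s}.
\]
This identifies $N|_s$ with the Deligne pairing of $M|_{\mathscr E_s}$ with itself, computed on the fiber $\mathscr E_s$, which is precisely the conic in $\mathbb P(V)$ parameterized by $s$.

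Second, I would compute $M|_{\mathscr E_s}$ explicitly. Since $\tau$ is, by construction, the composition of the embedding of $\mathscr E$ into $\mathbb P(V) \times_k \Hilb^{2t+1}(\mathbb P(V))$ with the projection onto the first factor, its restriction to $\mathscr E_s$ realizes an isomorphism of $\mathscr E_s$ onto the corresponding conic. Consequently $M|_{\mathscr E_s} = \tau^* \O_{\mathbb P(V)}(1)|_{\mathscr E_s}$ coincides with $\O_{\mathbb P(V)}(1)|_{\mathscr E_s}$. The Hilbert polynomial $p_{\mathscr E_s}(t) = 2t+1$ identifies the degree of the conic $\mathscr E_s$ in $\mathbb P(V)$ as $2$, and hence $\deg(\O_{\mathbb P(V)}(1)|_{\mathscr E_s}) = 2$. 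The standard description of the Deligne pairing on a smooth projective curve over a field then delivers the desired value $\deg(N|_s) = 2$.

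The only technical point that merits real care is the exact normalization of $\langle\ndot,\ndot\rangle$: one has to make sure that the convention used in \cite{YuanZhang2023} produces on the fiber the degree of $M|_{\mathscr E_s}$ itself rather than its square (which would give $4$) or some other multilinear variant. Once this convention is pinned down and base-change compatibility is invoked, the rest of the argument is a direct computation; there is no geometric obstruction, because everything is controlled by the universal property of the conic as a degree-$2$ curve in $\mathbb P(V)$.
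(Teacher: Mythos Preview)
Your proposal is correct and follows essentially the same route as the paper: both arguments invoke base-change compatibility of the Deligne pairing to reduce to the fiber $\mathscr E_s$, identify $M|_{\mathscr E_s}$ with $\O_{\mathbb P(V)}(1)|_{\mathscr E_s}$, and then read off the value $2$ from the degree of the conic. The only cosmetic difference is that the paper cites \cite[Th\'eor\`eme 5.3.1]{MG2000} (see also \cite[Theorem 5.8]{SLi2024}) for the base-change step where you cite \cite{YuanZhang2023}, and the paper does not pause over the normalization issue you flag.
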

\begin{proof}
We have $M\cdot\mathscr E_s=\O(1)\cdot\mathscr E_s$, where the former intersection is in $\mathscr E$ and later one is in $\mathbb P(V)$. Then by \cite[Th\'eor\`eme 5.3.1]{MG2000} (see also \cite[Theorem 5.8]{SLi2024}), we have
\[\langle M|_{\mathscr E_s},M|_{\mathscr E_s}\rangle=\langle M,M\rangle|_{s}=N|_s.\]
Then we have
\[\deg(N|_s)=\deg(M\cdot\mathscr E_s)=\deg(\O(1)\cdot\mathscr E_s)=2,\]
where the last equality is from the definition of conic.
\end{proof}
\subsubsection{}\label{geometry of Hilbert scheme of cubic}
Let $X$ be a geometrically integral cubic surface in $\mathbb P(V)$, which is non-ruled. By the facts in \cite[\S 9.3]{Dolgachev_book} summarized in \S \ref{summary: geometry of non-ruled cubic surface}, $X$ contains at most $27$ lines in $\mathbb P(V)$. In this case, we denote by $\Hilb^{2t+1}(X)$ the Hilbert scheme of conics in $\mathbb P(V)$ lying in $X$. By \cite[Lemma 4.3]{Salberger_preprint2013}, we have $\dim\left(\Hilb^{2t+1}(X)\right)=1$, and it consists of at most 27 irreducible components.

We denote by $\Hilb^{2t+1}(X)=\bigcup\limits_{i\in I}H_i$, where $I$ is the index set and each $H_i$ is an irreducible component for $i\in I$.

For every $s\in\Hilb^{2t+1}(X)(\overline k)\subseteq\Hilb^{2t+1}(\mathbb P(V))(\overline k)$, we have $\deg(N|_s)=2$. By the B\'ezout's theorem in the intersection theory (cf. \cite[Proposition 8.4]{Fulton}), each $H_i$ is a non-empty open subset of $\mathbb P^1_k$. The line bundle $N|_{H_i}$ is extended uniquely on $\mathbb P^1_k$, and it is determined only by its degree since $\operatorname{Pic}(\mathbb P^1_k)\cong\mathbb Z$. Then we have
\begin{equation}\label{line bundle N=2O(1)}
N|_{H_i}=\O_{\mathbb P^1}(2)|_{H_i}=\O_{\mathbb P^1}(1)|_{H_i}^{\otimes 2}
\end{equation}
on each $H_i$ via the above isomorphism of schemes, where $i\in I$.

\subsection{Deligne pairing of arithmetic intersections}
In this part, we compare the height of a conic in $\mathbb P^3_K$ with its corresponding point in the Hilbert scheme over a number field, where $K$ is a number field. Thanks to the Arakelov theory over quasi-projective varieties of \cite{YuanZhang2023}, we are able to compare the height of an arithmetic variety with the height of its corresponding point in a moduli space. 
\subsubsection{}
Let $K$ be a number field, and $\overline{\sE}$ be a Hermitian vector bundle of rank $4$ over $\spec\O_K$ defined in \S \ref{O{n+1} with l2-norm}. Let $\O_{\sE}(1)$ the universal bundle $\mathbb P(\sE)$, and $\overline{\O_{\sE}(1)}$ be the Hermitian line bundle which equipped with the corresponding Fubini--Study norms for every $v\in M_{K,\infty}$.

We keep the constructions in \S \ref{geometry of Hilbert scheme in X} over a number field $K$ or over its ring of integers $\O_K$. Let $\mathscr E$ be the universal family on the Hilbert scheme $\Hilb^{2t+1}(\mathbb P(\sE_K))$ over $\spec K$, and $s\in\Hilb^{2t+1}(\mathbb P(\sE_K))(K)$. Then we have the commutative diagram
\[\xymatrix{\relax \mathscr E_s \ar@{^{(}->}[r] \ar[d] &\mathscr E\ar[d]^{\pi}\ar@{^{(}->}[r]&\mathbb P(\sE_K)\times_K\Hilb^{2t+1}(\mathbb P(\sE_K))\ar[ld]\\ \spec K \ar[r]^{s} &\Hilb^{2t+1}(\mathbb P(\sE_K))&},\]
where $\mathscr E_s$ is the conic in $\mathbb P(\sE_K)$ corresponding to $s\in\Hilb^{2t+1}(\mathbb P(\sE_K))(K)$.

Let
\[\tau:\mathscr E\rightarrow X\]
be the composition of the embedding from $\mathscr E$ to $\mathbb P(\sE_K)\times_K\Hilb^{2t+1}(\mathbb P(\sE_K))$ and the canonical projection from $\mathbb P(\sE_K)\times_k\Hilb^{2t+1}(\mathbb P(\sE_K))$ to $\mathbb P(\sE_K)$, and let $\overline M=\tau^*\overline{\O(1)}$.
\subsubsection{}\label{Deligne pairing of arithmetic intersection}
By the arithmetic intersection theory, we have
\[h_{\overline{\O(1)}}(\mathscr E_s)=h_{\overline M}(\mathscr E_s)=\adeg_n(\widehat{c}_1(\overline M)^2\cdot\mathscr E_s)\]
for each $s\in\Hilb^{2t+1}(\mathbb P(\sE_K))(K)$.

Let $\overline N=\langle \overline M,\overline M\rangle$ be the Deligne pairing on $\mathscr E$. We consider $\langle\overline M|_{\mathscr E_s},\overline M|_{\mathscr E_s}\rangle$. By the functoriality of Deligne pairing in \cite[\S 4.3.2]{YuanZhang2023}, we have
\[\langle\overline M|_{\mathscr E_s},\overline M|_{\mathscr E_s}\rangle=\langle \overline M,\overline M\rangle|_s=\overline N\cdot s,\]
which means
\[h_{\overline M}(\mathscr E_s)=h_{\overline N}(s),\]
where $N$ is equipped with the induced metrics.
\subsubsection{}
Let $\overline{\sE}$ be the Hermitian vector bundle of rank $4$ defined in \S \ref{O{n+1} with l2-norm} over $\spec\O_K$, and $X$ be a geometrically integral cubic surface in $\mathbb P(\sE_K)$, which is non-ruled and contains at least one $K$-rational line. By \S \ref{geometry of Hilbert scheme of cubic}, $\Hilb^{2t+1}(X)=\bigcup\limits_{i\in I}H_i$, where $I$ is the index set and every $H_i$ is an irreducible component of dimension $1$. In addition, each $H_i$ is a non-empty open subset of $\mathbb P^1_K$, $i\in I$.

Let $h(\ndot)$ be a classic height on $\mathbb P^1_K$, which defines a height function on an arbitrary irreducible component of $\Hilb^{2t+1}(X)$ via the isomorphism from $H_i$ to $\mathbb P^1_K$. By Weil's height machine (cf. \cite[Theorem B.3.2]{Hindry}), $h(\ndot)$ is determined with respect to $\O(1)$. By the fact \eqref{line bundle N=2O(1)} and \S \ref{Deligne pairing of arithmetic intersection}, there exists a constant $C(X,K)$ depending on $X$ and $K$, such that
\begin{equation}\label{compare the height of a conic and its point in hilbert scheme}
\left|h_{\overline{\O(1)}}(\mathscr E_s)-2h(s)\right|\leqslant C(X,K),
\end{equation}
where $s\in\Hilb^{2t+1}(X)(K)$, and $C(X,K)$ is from the comparison of metrics of Hermitian line bundles. 

We will give a more precise comparison of the height of a conic and that of its parameterized point in the Hilbert scheme in \S \ref{height and degree of coefficients of Cayley form of conic in cubic} by considering its Cayley form. 
\section{Rational and integral points in conics}\label{chap.6}
In \cite{Salberger2008}, the author considered the height of points in the Hilbert scheme of conics in a surface of arbitrary degree, and counted rational points of bounded height in the complement of lines in the surfaces. Some of these techniques are improved in \cite{Salberger2015,Salberger_preprint2013} to refine the previous work. In this section, we use the techniques of Cayley forms and Arakelov to improve some of the previous work, in particular, for the case of non-ruled cubic surface. 
\subsection{Cayley forms of a complete intersection}
In this part, we describe the structure of the Cayley form of conics in a non-ruled cubic surface. 
\subsubsection{}
Let $k$ be a field, $V$ be a $k$-vector space of dimension $4$, and $X$ be a non-ruled cubic surface in $\mathbb P(V)$, which is geometrically integral naturally. In this case, $X$ contains at most $27$ lines over $\overline k$. We consider the case where $X$ contains at least one $k$-rational line; otherwise, by the B\'ezout theorem in intersection theory, $X$ contains no conic over $k$.

In this case, let $\ell_1,\ell_2\in V^\vee$ be two linear form generating this line, which are $k$-linearly independent. That is, for every non-zero tuple $(t_1,t_2)\in k^2$, the hyperplane defined by $t_1\ell_1+t_2\ell_2$ contains this line. We denote by $H_{t_1,t_2}$ this hyperplane.

We consider the intersection of $X$ and $H_{t_1,t_2}$ in $\mathbb P(V)$. As $t_1$ and $t_2$ vary, the intersection cycle of $H_{t_1,t_2}$ with $X$ consists of the line defined by $\ell_1$ and $\ell_2$, together with either a conic or two lines in $\mathbb P(V)$. By the above construction, the line defined by $\ell_1$ and $\ell_2$ is independent of the choice of $t_1, t_2 \in k$ in $H_{t_1,t_2}$.
\subsubsection{}\label{first property of the cayley form of a conic in a cubic}
We consider the Cayley form of the intersection cycle $H_{t_1,t_2}\cdot X$ in $\mathbb P(V)$. By Proposition \ref{structure of cayley form}, this Cayley form consists of a hyperplane and a conic hypersurface (may degenerate to be two hyperplanes if the conic is degenerated) in $\mathbb P\left(\bigwedge^{2}V^{\vee}\right)$. Let $T_0,T_1,T_2,T_3$ be a basis of $V$, and $s_0,s_1,s_2,s_3$ be its dual basis of $V^\vee$, and then the Cayley discussed above is determined by a homogeneous polynomial of variables $(s_i\wedge s_j)_{0\leqslant i<j\leqslant 3}$. In addition, the above line and conic are both of geometric multiplicity $1$.

We denote by $\psi_{H_{t_1,t_2}\cdot X}$ the Cayley form of the intersection cycle $H_{t_1,t_2}\cdot X$ from the above construction. Then we write $\psi_{H_{t_1,t_2}\cdot X}$ as the form
\begin{equation}\label{Cayley form of a line and a conic}
\psi_{H_{t_1,t_2}\cdot X}=b(t_1,t_2)\cdot\psi_{X,\text{line}}^{t_1,t_2}\cdot\psi_{X,\text{conic}}^{t_1,t_2}, 
\end{equation}
where $\psi_{X,\text{line}}^{t_1,t_2}$ is the Cayley form of the line, $\psi_{X,\text{conic}}^{t_1,t_2}$ is the Cayley form of the conic of the conic, and $b(t_1,t_2)$ is homogeneous polynomial of the variables $t_1,t_2$ which may be constant. Both $\psi_{X,\text{line}}^{t_1,t_2}$ and $\psi_{X,\text{conic}}^{t_1,t_2}$ are absolutely irreducible over the fractional field $k(t_1,t_2)$, and we require them have no factor in $k[t_1,t_2]$. More precisely, $\psi_{X,\text{line}}^{t_1,t_2}$ is a linear homogeneous polynomial of variables $(s_i\wedge s_j)_{0\leqslant i<j\leqslant 3}$, and $\psi_{X,\text{conic}}^{t_1,t_2}$ a homogeneous polynomial of degree $2$ with the same variables.

From the definition of Cayley form by the resultant, $\psi_{H_{t_1,t_2}\cdot X}$ is a homogeneous polynomial of the variables $(s_i\wedge s_j)_{0\leqslant i<j\leqslant 3}$ of degree $3$, all of whose coefficients are homogeneous polynomials of the variables $t_1,t_2$ of degree $3$. 

By \eqref{Cayley form of a line and a conic}, $\psi_{X,\text{line}}^{t_1,t_2}$ is independent of the choice of $t_1,t_2$, since the line in the intersection cycle $H_{t_1,t_2}\cdot X$ is invariant for all $t_1,t_2\in k$.
\subsubsection{}
Suppose that $X$ is a non-ruled cubic surface embedded in $\mathbb P(V)$ which satisfies the $r$-irreducible condition in Definition \ref{r-irreducible}. Then we have the following property. 
\begin{lemm}\label{r-irreducibility -> no line in T_0=0}
We keep all the above notations and constructions. If $X$ is a cubic surface embedded in $\mathbb P(V)$ defined by the homogeneous polynomial $f(T_0,T_1,T_2,T_3)$, whose homogeneous degree $3$ part of the variables $T_1,T_2,T_3$ is absolutely irreducible, then any lines in $X$ cannot lie in the plane $T_0=0$.
\end{lemm}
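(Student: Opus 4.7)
The plan is to argue by contradiction using the fact that absolute irreducibility of a cubic polynomial in three variables rules out linear factors. The key observation is that the hypothesis is about $f(0,T_1,T_2,T_3)$, viewed as a homogeneous cubic in $T_1,T_2,T_3$, and that this polynomial is exactly the defining equation of the scheme-theoretic intersection $X\cap H$, where $H$ is the hyperplane $\{T_0=0\}\subset\mathbb P(V)$, regarded as a curve in the projective plane $H\cong\mathbb P^2_k$ with homogeneous coordinates $T_1,T_2,T_3$.

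First I would restate the hypothesis in this geometric form: the scheme $X\cap H$ is an absolutely irreducible plane cubic curve in $H$. Then I suppose for contradiction that there exists a line $L\subset X$ with $L\subset H$. Since $L$ is a line of $\mathbb P(V)$ contained in $H$, it is cut out inside $H\cong\mathbb P^2_k$ by a non-zero linear form $\ell(T_1,T_2,T_3)$. The inclusion $L\subset X\cap H$ forces $\ell$ to divide $f(0,T_1,T_2,T_3)$ in $\overline k[T_1,T_2,T_3]$ (this is the standard consequence of Hilbert's Nullstellensatz for a principal ideal generated by an element whose zero locus contains the zero locus of a linear form in $\mathbb P^2$). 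This directly contradicts the absolute irreducibility of $f(0,T_1,T_2,T_3)$ as a polynomial of degree $3$, because the quotient $f(0,T_1,T_2,T_3)/\ell$ would be a non-trivial quadratic factor.

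Alternatively, and equivalently in intersection-theoretic language, one can observe that $X\cap H$ is a cycle of pure dimension $1$ and of degree $3=\deg(X)\cdot\deg(H)$ by B\'ezout's theorem. Its absolute irreducibility implies that as a cycle it is a single reduced irreducible component of degree $3$. A line $L\subset X\cap H$ would be a proper closed integral subscheme of dimension $1$ and of degree $1$ inside this irreducible curve, which is impossible as irreducible curves of dimension $1$ have no proper closed subscheme of the same dimension and lower degree.

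There is no serious obstacle here: the content is a one-line application of the irreducibility hypothesis, and I expect the entire proof to fit in a few lines once the translation between the algebraic statement on $f(0,T_1,T_2,T_3)$ and the geometric statement about the cubic curve $X\cap\{T_0=0\}$ is made explicit. The only subtlety worth signalling is that the hypothesis on the ``homogeneous degree $3$ part of the variables $T_1,T_2,T_3$'' is, by homogeneity of $f$ in $(T_0,\dots,T_3)$, literally the polynomial $f(0,T_1,T_2,T_3)$, so no further reduction is needed before applying the Nullstellensatz argument above.
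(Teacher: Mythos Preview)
Your proposal is correct and follows essentially the same approach as the paper: argue by contradiction, observe that a line in $X\cap\{T_0=0\}$ forces $f(0,T_1,T_2,T_3)$ to factor as a linear form times a (possibly degenerate) conic, contradicting absolute irreducibility. Your version is slightly more detailed in justifying the divisibility step via the Nullstellensatz and in noting explicitly that the ``homogeneous degree $3$ part in $T_1,T_2,T_3$'' is precisely $f(0,T_1,T_2,T_3)$, but the argument is the same.
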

\begin{proof}
If there is a line lies in the plane $T_0=0$, then the intersection of $X$ with the plane defined by $T_0=0$ is this line and a possibly degenerated conic curve. This fact deduces that $f(0,T_1,T_2,T_3)$ is reducible, which is exactly the homogeneous degree $3$ part of the variables $T_1,T_2,T_3$. This contradicts to that the part is absolutely irreducible. 
\end{proof}
\subsection{Heights of Cayley forms}\label{height and degree of coefficients of Cayley form of conic in cubic}
In \S \ref{first property of the cayley form of a conic in a cubic}, we give a description of the Cayley form of a conic in a non-ruled cubic surface in $\mathbb P^3$. In this part, we will give a more precise description with the help of the estimate of its heights in \eqref{compare the height of a conic and its point in hilbert scheme}. Since we will apply some arithmetic properties, will work over a number field. 
\subsubsection{}
Let $K$ be a number field, and $\overline{\sE}$ be the Hermitian vector bundle defined in \S \ref{O{n+1} with l2-norm} of rank $4$ over $\spec\O_K$. Let $X$ be a non-ruled cubic surface embedded in $\mathbb P(\sE_K)$. In this case, we consider the dependence of $\psi_{X,\text{conic}}^{t_1,t_2}$ on $t_1,t_2$ defined at \eqref{Cayley form of a line and a conic} under such an arithmetic setting, which is inspired by \cite{Tanaka2004}. 
\begin{prop}\label{degree of the cayley form of line and conic}
We keep all the notations and constructions above. The coefficients of $\psi_{X,\text{conic}}^{t_1,t_2}$ are all homogeneous polynomials of $t_1,t_2$ of degree $2$, which have no common factor. 
\end{prop}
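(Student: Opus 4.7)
The plan is in two steps: first establish the homogeneity and common degree shape of the coefficients of $\psi_{X,\text{conic}}^{t_1,t_2}$, then pin down the exact degree via the Deligne pairing height comparison in \eqref{compare the height of a conic and its point in hilbert scheme}.

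For the homogeneity, the key observation is that by the resultant description of Cayley forms in \S \ref{Cayley form of complete intersection}, the full Cayley form $\psi_{H_{t_1,t_2}\cdot X}$ has each of its coefficients in the Pl\"ucker variables $(s_i\wedge s_j)_{0\leqslant i<j\leqslant 3}$ equal to a polynomial in $t_1,t_2$ homogeneous of degree $3$. In the factorization \eqref{Cayley form of a line and a conic}, the factor $\psi_{X,\text{line}}^{t_1,t_2}$ is independent of $t_1,t_2$ because the line is fixed, and $b(t_1,t_2)$ is a homogeneous polynomial in $t_1,t_2$ of some degree $d_b\geqslant 0$. Comparing degrees forces every coefficient of $\psi_{X,\text{conic}}^{t_1,t_2}$, viewed as a polynomial in the Pl\"ucker variables, to be homogeneous in $t_1,t_2$ of a common degree $d=3-d_b$. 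The \textit{no common factor} assertion is then exactly the normalization requirement built into the factorization in \S \ref{first property of the cayley form of a conic in a cubic}: if such coefficients shared a common factor $g(t_1,t_2)\in k[t_1,t_2]$, then $g$ would divide $\psi_{X,\text{conic}}^{t_1,t_2}$ as a polynomial in $k[t_1,t_2]$, contradicting the normalization.

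It remains to prove $d=2$, or equivalently $d_b=1$. Let $s\in H_i\subset\mathbb P^1_K$ be the point parameterizing the conic $\mathscr E_s$ attached to $[t_1:t_2]$. The Deligne pairing estimate \eqref{compare the height of a conic and its point in hilbert scheme} gives $h_{\overline{\O(1)}}(\mathscr E_s)=2h(s)+O_{X,K}(1)$, and under the identification $H_i\cong\mathbb P^1_K$ we have $h(s)=h_{\mathbb P^1}([t_1:t_2])+O(1)$. By Proposition \ref{compare arakelov height and chow form height} applied to $\mathscr E_s$, one also has $h_{\overline{\O(1)}}(\mathscr E_s)=h(\psi_{\mathscr E_s})+O(1)$. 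On the other hand, the normalized factorization \eqref{Cayley form of a line and a conic} shows that $\psi_{\mathscr E_s}$ coincides, up to a scalar of height $O(1)$, with the polynomial obtained by substituting $(t_1,t_2)$ into $\psi_{X,\text{conic}}^{t_1,t_2}$; since the coefficients of the latter are homogeneous in $t_1,t_2$ of degree $d$ and share no common factor, this specialized naive height equals $d\cdot h_{\mathbb P^1}([t_1:t_2])+O(1)$ uniformly in $[t_1:t_2]$ of sufficiently large height. Equating the two linear growth rates as $h_{\mathbb P^1}([t_1:t_2])\to\infty$ then forces $d=2$.

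The main obstacle I anticipate is the uniform control of the $O(1)$ error in the last height comparison: one must confirm that the naive height of the specialization of $\psi_{X,\text{conic}}^{t_1,t_2}$ at a $K$-rational $[t_1:t_2]$ of large height really equals $d\cdot h_{\mathbb P^1}([t_1:t_2])$ up to a bounded term, which is where the no common factor normalization and standard Mahler-measure type estimates intervene. A secondary technical point is that finitely many degenerate specializations, where the plane section of $X$ is a line plus two lines rather than a line plus an irreducible conic, lie outside $H_i$; these cause no difficulty since $d$ can be read off from $K$-rational points of $H_i$ of arbitrarily large $\mathbb P^1$-height, which is sufficient.
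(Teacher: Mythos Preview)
Your proof is correct and follows essentially the same route as the paper: both extract the homogeneity of the coefficients from the factorization \eqref{Cayley form of a line and a conic} and the resultant description, then pin down the exact degree $2$ by combining the Deligne pairing comparison \eqref{compare the height of a conic and its point in hilbert scheme}, the Weil height machine identification $h(s)=h([t_1:t_2])+O(1)$, and Proposition \ref{compare arakelov height and chow form height}. Your treatment is in fact slightly more explicit than the paper's, notably in isolating $d=3-d_b$ and in flagging the specialization height estimate $h(\psi_{\mathscr E_s})=d\cdot h([t_1:t_2])+O(1)$ as the point where the no-common-factor normalization is actually used; the paper's proof invokes the same ingredients but leaves this last step implicit.
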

\begin{proof}
By \S \ref{first property of the cayley form of a conic in a cubic}, the polynomial $\psi_{X,\text{conic}}^{t_1,t_2}$ is homogeneous of degree $2$, whose coefficients are homogeneous polynomials of the variables $t_1,t_2$ of degree at most $3$. 

We denote by $s$ the corresponding point in the Hilbert scheme of the conic obtained by the intersection of $X$ and $H_{t_1,t_2}$. By \eqref{compare the height of a conic and its point in hilbert scheme} and Proposition \ref{compare arakelov height and chow form height}, we have 
\[h(\psi_{X,\text{conic}}^{t_1,t_2})-2h(s)=O_{X,K}(1),\]
where the uniform constant depends on $X$ and the base field $K$. Let $h([t_1:t_2])$ be the logarithmic height defined at \eqref{log naive height}, where we consider $[t_1:t_2]$ as a rational point in $\mathbb P^1_K$. By Weil height machine, we have 
\[h(s)-h([t_1:t_2])=O_{H_{t_1,t_2},K}(1),\]
where the uniform constant depends on the choice of the linear forms $\ell_1,\ell_2$ generating $H_{t_1,t_2}$, and the base field $K$. 

By the above two estimates of heights, the coefficients of $\psi_{X,\text{conic}}^{t_1,t_2}$ are homogeneous of degree $2$ of the variables $t_1,t_2$. If they have a common factor, it contradicts to the above comparison of heights. This terminates the proof. 
\end{proof}

By Proposition \ref{degree of the cayley form of line and conic}, we may write $\psi_{X,\text{conic}}^{t_1,t_2}$ as the form 
\begin{equation}\label{def of binary coefficients in cayley form}
\psi_{X,\text{conic}}^{t_1,t_2}=\sum_{\begin{subarray}{c}I=(i_1,i_2),0\leqslant i_1<i_2\leqslant3\\J=(j_1,j_2),0\leqslant j_1<j_2\leqslant3\end{subarray}}b_{IJ}(t_1,t_2)(s_{i_1}\wedge s_{i_2})(s_{j_1}\wedge s_{j_2}),
\end{equation}
where $b_{IJ}(t_1,t_2)$ is a homogeneous polynomial with variables $t_1,t_2$ of degree $2$, $I=(i_1,i_2)\in\mathbb N^2,0\leqslant i_1<i_2\leqslant3$, and $J=(j_1,j_2)\in\mathbb N^2,0\leqslant j_1<j_2\leqslant3$.  
\subsection{Counting rational points}
By \eqref{def of binary coefficients in cayley form}, we can control the height of conics in a cubic surface. This illustration is useful to control the rational points of bounded height lying in conics. 

\subsubsection{}
We have the following property on these $b_{IJ}(t_1,t_2)$. 
\begin{prop}\label{all cayley form non zero}
With all the above notations and conditions. All these $b_{IJ}(t_1,t_2)$ cannot be zero simultaneously for all $t_1,t_2\in K$ not all zero, or equivalently, all $b_{IJ}(t_1,t_2)$ have no common factor.
\end{prop}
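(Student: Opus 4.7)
The plan is to argue by contradiction, combining the factorization identity \eqref{Cayley form of a line and a conic} with the nonvanishing of Cayley forms of nonzero effective cycles given by Proposition \ref{structure of cayley form}. Suppose there exists $(t_1^0, t_2^0) \in \overline{K}^2 \setminus \{0\}$ at which every $b_{IJ}(t_1, t_2)$ vanishes; equivalently, by \eqref{def of binary coefficients in cayley form}, the polynomial $\psi_{X,\text{conic}}^{t_1, t_2}$, viewed as an element of $K[t_1,t_2][\{s_i \wedge s_j\}]$, specializes to $0$ at $(t_1, t_2) = (t_1^0, t_2^0)$. The goal is to specialize the identity \eqref{Cayley form of a line and a conic} at this point and derive a contradiction, since the left-hand side ought to be the Cayley form of a nonzero effective cycle.

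Concretely, the right-hand side of \eqref{Cayley form of a line and a conic} becomes $b(t_1^0, t_2^0) \cdot \psi_{X,\text{line}} \cdot 0 = 0$ under this specialization. To interpret the left-hand side I would invoke the resultant description of Cayley forms from \S \ref{Cayley form of complete intersection}: the polynomial $\psi_{H_{t_1,t_2} \cdot X}$ is obtained as the resultant of the defining equation of $X$, the linear form $t_1 \ell_1 + t_2 \ell_2$, and two parametrizing linear forms, and resultants specialize functorially in their coefficients. Since $(t_1^0, t_2^0) \neq 0$ and $\ell_1, \ell_2$ are $K$-linearly independent, the hyperplane $H_{t_1^0, t_2^0}$ is a genuine plane that cannot contain the cubic surface $X$, so $H_{t_1^0, t_2^0} \cdot X$ is a nonzero effective cycle of degree $3$. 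Proposition \ref{structure of cayley form} then forces its Cayley form to be nonzero, contradicting the vanishing of the right-hand side. The equivalence between ``no simultaneous vanishing over $\overline{K}^2 \setminus \{0\}$'' and ``no common factor in $K[t_1,t_2]$'' for a collection of homogeneous binary forms is standard, following from the Nullstellensatz together with the fact that common factors descend from $\overline{K}[t_1,t_2]$ to $K[t_1,t_2]$ by Galois invariance.

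The step I expect to require most care is the rigorous identification of the specialization of the parametric polynomial $\psi_{H_{t_1,t_2} \cdot X}$ at a fixed value $(t_1^0, t_2^0)$ with the Cayley form of the specialized cycle $H_{t_1^0, t_2^0} \cdot X$, which amounts to the flatness/functoriality of the resultant construction under specialization of coefficients. A subsidiary concern is the possibility that the prefactor $b(t_1,t_2)$ in \eqref{Cayley form of a line and a conic} could absorb the zero and so allow the right-hand side to remain nonzero; however, this is ruled out once one observes that \eqref{Cayley form of a line and a conic} is an identity of polynomials in the joint variables $(t_1, t_2, s_{ij})$, so specialization is legitimate on both sides simultaneously and preserves the product structure.
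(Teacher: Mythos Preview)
Your proof is correct and follows essentially the same idea as the paper's, namely that the Cayley form of a nonzero effective $1$-cycle in $\mathbb P^3$ cannot vanish identically. The paper phrases this contrapositively and more geometrically: if all $b_{IJ}(t_1,t_2)$ vanished at some $(t_1,t_2)$, then the curve $H_{t_1,t_2}\cap X$ would meet every line in $\mathbb P(\sE_K)$, which is absurd. You reach the same contradiction by specializing the factorization \eqref{Cayley form of a line and a conic} and invoking Proposition \ref{structure of cayley form}, which is arguably more careful since you explicitly justify via the resultant description why the parametric Cayley form specializes correctly; the paper leaves this implicit. Your handling of the equivalence over $\overline K$ versus $K$ is also more precise than the statement itself, which is a minor improvement.
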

\begin{proof}
If there exists such $(t_1,t_2)\in K^2\smallsetminus\{(0,0)\}$, then we consider the hyperplane $H_{t_1,t_2}$, and its intersection with $X$. In this case, all $b_{IJ}(t_1,t_2)$ are zero means that the locus obtained by the intersection of $H_{t_1,t_2}$ and $X$ have common points with all lines in $\mathbb P(\sE_K)$, and this is impossible.
\end{proof}

\subsubsection{}\label{parameterizing a family of cayley forms}
 We denote by $x_1=t_1^2$, $x_2=t_1t_2$, $x_3=t_2^2$. For all $I=(i_1,i_2)\in\mathbb N^2,0\leqslant i_1<i_2\leqslant3$ and $J=(j_1,j_2)\in\mathbb N^2,0\leqslant j_1<j_2\leqslant3$, we denote by $b_{IJ}(x_1,x_2,x_3)=b_{IJ}(t_1,t_2)$ in \eqref{def of binary coefficients in cayley form}, where we consider $a_i(x_1,x_2,x_3)$ as a linear form of the variables $x_1,x_2,x_3$. In addition, let
\begin{equation}\label{embedding P1 into P2}
\begin{array}{rrcl}
\iota_1:&\mathbb P^1_K&\longrightarrow&\mathbb P^2_K\\
&[t_1:t_2]&\mapsto&[t_1^2:t_1t_2:t_2^2]
\end{array}
\end{equation}
be a morphism, and
\begin{equation}\label{embedding P2 into P20}
\begin{array}{rrcl}
\iota_2:&\mathbb P^2_K&\dashrightarrow&\mathbb P^{20}_K\\
&[x_1:x_2:x_3]&\mapsto&\left[\left\{b_{IJ}(x_1,x_2,x_3)\right\}_{I=(i_1,i_2),0\leqslant i_1<i_2\leqslant3;J=(j_1,j_2),0\leqslant j_1<j_2\leqslant3}\right]
\end{array}
\end{equation}
be a rational map. By Proposition \ref{homogeneous deg 2 part non zero}, the composition $\iota=\iota_2\circ \iota_1$ with  
\begin{equation}\label{embedding P1 into P20}
\begin{array}{rrcl}
\iota:&\mathbb P^1_K&\longrightarrow&\mathbb P^{20}_K\\
&[t_1:t_2]&\mapsto&\left[\left\{b_{IJ}(t_1,t_2)\right\}_{I=(i_1,i_2),0\leqslant i_1<i_2\leqslant3;J=(j_1,j_2),0\leqslant j_1<j_2\leqslant3}\right]
\end{array}
\end{equation}
is a morphism. 

We have the following properties about $\im(\iota)$. 
\begin{lemm}\label{rank of the matrix from all coefficients of cayley form}
With all the above notations and constructions. The rank of the matrix of coefficients of $b_{IJ}$ in \eqref{embedding P2 into P20} is either $2$ or $3$. 
\end{lemm}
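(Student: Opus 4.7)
The upper bound $\mathrm{rank}\leqslant 3$ is automatic, since the coefficient matrix has only three columns (one for each of the variables $x_1, x_2, x_3$). So the plan reduces to ruling out ranks $0$ and $1$; the key input for this is Proposition \ref{degree of the cayley form of line and conic}, which asserts that the coefficients $b_{IJ}(t_1,t_2)$ have no common factor.

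The rank $0$ case is immediate: it would force every $b_{IJ}$ to vanish identically, making $\psi_{X,\mathrm{conic}}^{t_1,t_2}$ the zero polynomial, contrary to the construction in \S \ref{first property of the cayley form of a conic in a cubic} (and to Proposition \ref{all cayley form non zero}).

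For the rank $1$ case, I would argue by contradiction. Suppose there is a single nonzero linear form $\ell(x_1,x_2,x_3)=\alpha x_1+\beta x_2+\gamma x_3$ and scalars $c_{IJ}\in K$, not all zero, such that $b_{IJ}(x_1,x_2,x_3)=c_{IJ}\,\ell(x_1,x_2,x_3)$ for every admissible pair $(I,J)$. Pulling back via $\iota_1$ in \eqref{embedding P1 into P2}, i.e.\ substituting $x_1=t_1^2,\ x_2=t_1t_2,\ x_3=t_2^2$, yields
\[
b_{IJ}(t_1,t_2)=c_{IJ}\,q(t_1,t_2),\qquad q(t_1,t_2)=\alpha t_1^2+\beta t_1t_2+\gamma t_2^2.
\]
Since $(\alpha,\beta,\gamma)\neq(0,0,0)$, the binary quadratic form $q$ is a nonzero element of $K[t_1,t_2]$, and it divides every $b_{IJ}(t_1,t_2)$. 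This is a nontrivial common factor of the coefficients of $\psi_{X,\mathrm{conic}}^{t_1,t_2}$, contradicting Proposition \ref{degree of the cayley form of line and conic}.

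There is no substantive obstacle: the lemma is essentially a reformulation of the ``no common factor'' statement in Proposition \ref{degree of the cayley form of line and conic} through the parametrization $\iota_1$. The only observation to make explicit is that $\iota_1$ is non-degenerate in the sense that $t_1^2,\,t_1t_2,\,t_2^2$ are $K$-linearly independent in $K[t_1,t_2]$, which is what guarantees that a common linear form in $(x_1,x_2,x_3)$ pulls back to a genuinely nonzero common binary form in $(t_1,t_2)$.
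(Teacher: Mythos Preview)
Your proof is correct and follows essentially the same approach as the paper: the paper's one-line argument invokes Proposition~\ref{all cayley form non zero} (the $b_{IJ}$ have no common factor, hence rank $\geqslant 2$) together with the observation that each $b_{IJ}$ has at most three monomials (hence rank $\leqslant 3$), and your version simply unpacks the rank-$0$ and rank-$1$ cases explicitly. The only cosmetic difference is that the paper cites Proposition~\ref{all cayley form non zero} rather than Proposition~\ref{degree of the cayley form of line and conic} for the no-common-factor input, but both contain that assertion.
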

\begin{proof}
By Proposition \ref{all cayley form non zero}, for all $I=(i_1,i_2),0\leqslant i_1<i_2\leqslant3$ and $J=(j_1,j_2),0\leqslant j_1<j_2\leqslant3$, all $b_{IJ}(t_1,t_2)$ have no common factor and have at most $3$ monomials, so the rank is at least $2$ and at most $3$.
\end{proof}
\begin{prop}\label{image of coefficients of cayley form}
We keep all the above notations and constructions. If the matrix determined in Lemma \ref{rank of the matrix from all coefficients of cayley form} is rank $3$, then $\im(\iota)$ is a geometrically integral curve in $\mathbb P^{20}_K$ of degree $2$. If the matrix determined in Lemma \ref{rank of the matrix from all coefficients of cayley form} is rank $2$, then $\im(\iota)$ is a line in $\mathbb P^{20}_K$ of degree $2$ endowed with a double covering $\iota:\mathbb P^1_K\rightarrow\im(\iota)$. 
\end{prop}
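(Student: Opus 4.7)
The plan is to exploit the factorisation $\iota=\iota_2\circ\iota_1$ from \eqref{embedding P1 into P20}, in which $\iota_1\colon\mathbb P^1_K\hookrightarrow\mathbb P^2_K$ is the Veronese embedding whose image is a smooth, and hence geometrically integral, plane conic of degree $2$. Proposition \ref{all cayley form non zero} ensures $\iota$ itself has no indeterminacy locus, so it suffices to analyse $\iota_2$ on (a neighbourhood of) $\iota_1(\mathbb P^1_K)$. The two cases of the proposition will then be dispatched according to the rank of the coefficient matrix of the linear forms $b_{IJ}(x_1,x_2,x_3)$.

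\textbf{Rank $3$ case.} Here the twenty-one linear forms $b_{IJ}(x_1,x_2,x_3)$ span all of $H^0(\mathbb P^2_K,\O(1))$, so they have no common zero on $\mathbb P^2_K$; consequently $\iota_2$ extends to an everywhere-defined linear embedding of $\mathbb P^2_K$ onto a plane $\Pi\cong\mathbb P^2_K$ inside $\mathbb P^{20}_K$. Since linear embeddings preserve degrees and geometric integrality of subvarieties, $\im(\iota)=\iota_2(\iota_1(\mathbb P^1_K))$ is the isomorphic image of a smooth plane conic, and hence a geometrically integral curve of degree $2$ in $\mathbb P^{20}_K$.

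\textbf{Rank $2$ case.} I would fix a basis $\beta_1,\beta_2$ of the two-dimensional linear span $W\subset H^0(\mathbb P^2_K,\O(1))$ generated by the $b_{IJ}$; every $b_{IJ}$ is then a $K$-linear combination of $\beta_1,\beta_2$, so after a linear change of coordinates on $\mathbb P^{20}_K$ the closure of the image of $\iota_2$ is the line $L\subset\mathbb P^{20}_K$ parameterised by $[\beta_1:\beta_2]$. Pulling back along $\iota_1$ yields the two binary quadratic forms $\tilde\beta_i(t_1,t_2):=\beta_i(t_1^2,t_1t_2,t_2^2)$ for $i=1,2$. Because $\{\tilde\beta_1,\tilde\beta_2\}$ spans the same $K$-vector space as $\{b_{IJ}(t_1,t_2)\}_{I,J}$, Proposition \ref{all cayley form non zero} forces $\tilde\beta_1$ and $\tilde\beta_2$ to share no common zero on $\mathbb P^1_K$. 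Consequently $[\tilde\beta_1:\tilde\beta_2]$ is a well-defined morphism $\mathbb P^1_K\rightarrow L\cong\mathbb P^1_K$ of degree $2$, and composing with the linear inclusion $L\hookrightarrow\mathbb P^{20}_K$ recovers $\iota$. Thus $\im(\iota)=L$ is a line, endowed with the double cover $\iota\colon\mathbb P^1_K\rightarrow L$, and the pushforward cycle $\iota_*[\mathbb P^1_K]$ has degree $2\cdot 1=2$ in $\mathbb P^{20}_K$.

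The only genuinely delicate point will be the verification in the rank $2$ case that $\tilde\beta_1,\tilde\beta_2$ have no common zero on $\mathbb P^1_K$: without this, $\iota$ might factor through a single point of $L$ and the degree count would collapse. This is precisely the place where Proposition \ref{all cayley form non zero} is indispensable. The remaining assertions will reduce to the standard facts that linear embeddings preserve the degree of a subvariety and that a non-constant morphism $\mathbb P^1\rightarrow\mathbb P^1$ given by two coprime binary quadratic forms has degree exactly $2$.
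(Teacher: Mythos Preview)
Your proof is correct and follows essentially the same approach as the paper: both factor $\iota=\iota_2\circ\iota_1$, identify $\im(\iota_1)$ as the Veronese conic in $\mathbb P^2_K$, and then distinguish the rank $3$ case (where $\iota_2$ is a linear embedding) from the rank $2$ case (where the image is a line and the map is a double cover). Your rank $2$ argument is in fact slightly more careful than the paper's, since you explicitly verify via Proposition \ref{all cayley form non zero} that the two basis forms $\tilde\beta_1,\tilde\beta_2$ have no common zero, whereas the paper's proof only sketches the fibre count.
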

\begin{proof}
By an elementary calculation, the image $\im(\iota_1)$ in \eqref{embedding P1 into P2} is a conic in $\mathbb P^2_K=\proj\left(K[X,Y,Z]\right)$ determined by the equation $XZ-Y^2=0$. By Lemma \ref{rank of the matrix from all coefficients of cayley form}, we only need to consider the case of rank $2$ and $3$. 

If the rank mentioned above is of rank $3$, then $\iota_2$ in \eqref{embedding P2 into P20} embeds $\im(\iota_1)$ into $\mathbb P^{20}_K$ by a linear map. Then $\im(\iota)$ in \eqref{embedding P1 into P20} is still of degree $2$ and dimension $1$ in $\mathbb P^{20}_K$. 

If the rank mentioned above is of rank $2$, then $\iota_2$ maps $\im(\iota_1)$ to a line in $\mathbb P^{20}_K$. In addition, by Proposition \ref{homogeneous deg 2 part non zero}, all points in $\im(\iota)$ except for finitely many have $2$ inverse images in $\mathbb P^1_K$, which terminates the proof. 
\end{proof}
\subsubsection{}
By Proposition \ref{image of coefficients of cayley form}, we can estimate the conics in a non-ruled cubic surface $X$ parameterized in an irreducible component of the Hilbert scheme with the height of $\psi_{X,\text{conic}}^{t_1,t_2}$ bounded. More precisely, we want to study the set 
\[\left\{[t_1:t_2]\in\mathbb P^1_K(K)\mid H_K\left(\psi_{X,\text{conic}}^{t_1,t_2}\right)\leqslant B\right\}, \]
where the naive height $H_K(\ndot)$ of a polynomial is defined at \eqref{naive height of a polynomial or hypersurface}. We have the following estimate. 
\begin{prop}\label{uniform upper bound of cayley form in degree 2 curve}
For a fixed irreducible component of the Hilbert scheme of conics in a non-ruled cubic surface in $\mathbb P(\sE_K)$, we have 
\[\#\left\{[t_1:t_2]\in\mathbb P^1_K(K)\mid H_K\left(\psi_{X,\text{conic}}^{t_1,t_2}\right)\leqslant B\right\}\leqslant 16C_1'(20,1,K) B\ll_KB,\]
where the constant $C_1'(n,d,K)$ is introduced in Corollary \ref{global determinant of hypersurface in a linear subvariety without height term}.
\end{prop}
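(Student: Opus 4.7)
The plan is to interpret $H_K(\psi_{X,\text{conic}}^{t_1,t_2})$ as the naive $\mathbb P^{20}_K$-height of the image point $\iota([t_1:t_2])$ under the morphism $\iota:\mathbb P^1_K\to\mathbb P^{20}_K$ from \eqref{embedding P1 into P20}. By Proposition \ref{all cayley form non zero} the $b_{IJ}(t_1,t_2)$ share no common factor, so $\iota$ is a genuine morphism, and the identification of heights is immediate from the definition of the naive height at \eqref{naive height of a polynomial or hypersurface}. Each $b_{IJ}$ being homogeneous of degree $2$ in $(t_1,t_2)$ also yields $\iota^*\O_{\mathbb P^{20}_K}(1)\cong\O_{\mathbb P^1_K}(2)$.

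Next I would invoke Proposition \ref{image of coefficients of cayley form} to split into two cases. When the matrix of coefficients in Lemma \ref{rank of the matrix from all coefficients of cayley form} has rank $3$, $\iota$ is an isomorphism onto a geometrically integral conic $\im(\iota)\subset\mathbb P^{20}_K$ lying in a plane. The counted set is then in bijection with $S(\im(\iota);B)$, so Corollary \ref{uniform upper bound of points of bounded height in plane curves} applied with $n=20$ and $\delta=2$ yields at once
\[\#S(\im(\iota);B)\leqslant C'_1(20,1,K)\cdot 2^4\cdot B=16C'_1(20,1,K)B,\]
which is the stated inequality.

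When the rank is $2$, $\im(\iota)$ is only a line $L\subset\mathbb P^{20}_K$, so applying Corollary \ref{uniform upper bound of points of bounded height in plane curves} to $L$ alone (with $\delta=1$) would only produce an unusable $B^2$ bound. Instead I would introduce an auxiliary Veronese-type closed immersion $\nu:\mathbb P^1_K\hookrightarrow\mathbb P^{20}_K$ defined by three linearly independent degree-$2$ forms in $(t_1,t_2)$ placed in three selected coordinates and zero elsewhere. Since $\nu^*\O(1)\cong\iota^*\O(1)\cong\O_{\mathbb P^1_K}(2)$, the heights $H_K(\iota(P))$ and $H_K(\nu(P))$ agree up to a multiplicative constant, reducing the count to $\#S(\nu(\mathbb P^1_K);\,O_{X,K}(B))$, and Corollary \ref{uniform upper bound of points of bounded height in plane curves} applied to the plane conic $\nu(\mathbb P^1_K)$ then gives a bound of order $B$.

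The main obstacle I anticipate is the degenerate rank-$2$ case: to retain the exact inequality $\leqslant 16 C'_1(20,1,K)B$ and not merely $\ll_K B$, one must track the comparison constant between $H_K(\iota(P))$ and $H_K(\nu(P))$, which \emph{a priori} depends on the coefficients of the $b_{IJ}$ attached to $X$. One either absorbs this into the $\ll_K B$ already recorded in the conclusion, or proceeds directly via $H_K(\iota(P))\asymp H_K(P)^2$ combined with a uniform upper bound on $\{P\in\mathbb P^1_K(K):H_K(P)\leqslant O(\sqrt B)\}$, which is $\ll_K B$ by Schanuel-type counting.
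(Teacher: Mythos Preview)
Your rank-$3$ case matches the paper exactly: identify the counted set with $S(\im(\iota);B)$ for the plane conic $\im(\iota)\subset\mathbb P^{20}_K$ and apply Corollary~\ref{uniform upper bound of points of bounded height in plane curves} with $\delta=2$.

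In the rank-$2$ case your approach diverges from the paper's and carries the gap you yourself flag. The comparison $H_K(\iota(P))\asymp H_K(\nu(P))$ (or $H_K(\iota(P))\asymp H_K(P)^2$) has implied constants depending on the coefficients of the $b_{IJ}$, hence on $X$; the resulting bound is only $\ll_{X,K}B$, not $\ll_K B$. This cannot be ``absorbed into the $\ll_K B$ already recorded in the conclusion'': the content of the proposition is precisely uniformity in $X$, and this uniformity is what is used downstream in Corollary~\ref{number of conics with height smaller than B} and Theorem~\ref{estimate of rational points in conics of cubic}, where one sums over all conics in an arbitrary non-ruled cubic surface with a constant depending only on $K$.

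The paper handles rank $2$ by a different mechanism. It singles out two coordinates $b_1,b_2$ with $K$-linearly independent coefficient vectors and considers the polarized morphism $\varphi:\mathbb P^1_K\to\mathbb P^1_K$, $[s:t]\mapsto[b_1(s,t):b_2(s,t)]$, which satisfies $\varphi^*\O(1)\cong\O(2)$. The key observation is that the bound of Corollary~\ref{uniform upper bound of points of bounded height in plane curves} for lines in $\mathbb P^2_K$ is \emph{uniform over all lines}, hence over all choices of $b_1,b_2$; the paper interprets this as a uniform bound for the full family of metrics on $\O(2)$ arising from such $\varphi$, and then passes to $\O(1)$ by a square-root argument, finally adjusting constants to match the rank-$3$ bound $16C'_1(20,1,K)B$. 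What your auxiliary Veronese $\nu$ or a direct Schanuel count loses is exactly this: neither route exploits that the determinant-method bound already ranges over the whole family of linear embeddings, so the $X$-dependence is never eliminated.
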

\begin{proof}
When the rank of the matrix $(b_{IJ})$ is $3$, we have
\[\left\{[t_1:t_2]\in\mathbb P^1_K(K)\mid H_K\left(\psi_{X,\text{conic}}^{t_1,t_2}\right)\leqslant B\right\}\subseteq S(\im(\iota);B),\]
where the height of the rational points in $\im(\iota)$ are considered in $\mathbb P^{20}_K$. By Corollary \ref{uniform upper bound of points of bounded height in plane curves}, for the geometrically projective curve $\im(\iota)$ of degree $2$ lying in a plane in $\mathbb P^{20}_K$, we have
\[\#S(\im(\iota);B)\leqslant 16C'_1(20,1,K) B.\]

When the rank of the matrix $(b_{IJ})$ is $2$, without loss of generality, we may assume the vectors of coefficients of $b_1(t_1,t_2)$ and $b_2(t_1,t_2)$ in $(b_{IJ}(t_1,t_2))$ are $K$-linearly independent. First we consider the set 
\[\left\{[X:Y:Z]\in\mathbb P^2_K(K)\mid H_K([b_1(X,Y,Z):b_2(X,Y,Z)])\leqslant B\right\}. \]
By the uniform upper bound of rational points for lines from Corollary \ref{uniform upper bound of points of bounded height in plane curves}, we have 
\begin{eqnarray}\label{uniform upper bound of points in a line wrt O(2)}
& &\#\left\{[X:Y:Z]\in\mathbb P^2_K(K)\mid H_K([b_1(X,Y,Z):b_2(X,Y,Z)])\leqslant B\right\}\\
&\leqslant& C'_1(2,1,K) B^2. \nonumber
\end{eqnarray}

In fact, the morphism
\[\begin{array}{rrcl}
\varphi:&\mathbb P^1_K&\longrightarrow&\mathbb P^1_K\\
&[s:t]&\mapsto&[b_1(s,t):b_2(s,t)]
\end{array}\]
is polarized, and we have $\varphi^*\O(1)\cong\O(2)$. When the coefficients of $b_1(s,t)$ and $b_2(s,t)$ vary, we can consider the metrics of $\O(2)$ on $\mathbb P^1_K$ on the right-hand side vary. By this view point, the upper \eqref{uniform upper bound of points in a line wrt O(2)} is for all possible metrics on $\O(2)$ on $\mathbb P^1_K$. Then the square root of the uniform bound \eqref{uniform upper bound of points in a line wrt O(2)} gives a uniform bound with respect to $\O(1)$. Then we obtain the assertion by adjusting the constants with the case of rank $3$. 
\end{proof}

By the above result, we can give an upper bound of conics with bounded height in a non-ruled cubic surface. 
\begin{coro}\label{number of conics with height smaller than B}
Let $X$ be a non-ruled cubic surface embedded in $\mathbb P(\sE_K)$. For a conic $C$ in $X$, let $\psi_{C}$ be the Cayley form of $C$ in Definition \ref{definition of Cayley variety and Cayley form}. Then we have
\[\#\{C\subseteq X\mid H_K(\psi_{C})\leqslant B\}\leqslant432C_1'(20,1,K)B\ll_{K}B,\]
where the height $H_K(\ndot)$ is the classic height defined in \eqref{log naive height}, and the constant $C_1'(n,d,K)$ is introduced in Corollary \ref{global determinant of hypersurface in a linear subvariety without height term}. 
\end{coro}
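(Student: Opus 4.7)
The plan is to stratify the family of conics in $X$ by the irreducible components of the Hilbert scheme $\Hilb^{2t+1}(X)$ and to apply Proposition \ref{uniform upper bound of cayley form in degree 2 curve} to each stratum separately.

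First I would invoke the structural result recalled in \S \ref{geometry of Hilbert scheme of cubic}: since $X$ is a non-ruled cubic surface, the Hilbert scheme decomposes as $\Hilb^{2t+1}(X) = \bigcup_{i \in I} H_i$ with $\#I \leqslant 27$, the components being indexed by the (at most $27$) lines $L_i \subset X$. Each $H_i$ is a non-empty open subset of $\mathbb P^1_K$, and the description in \S \ref{first property of the cayley form of a conic in a cubic} identifies a parameter $[t_1:t_2] \in H_i(K)$ with the residual conic $C$ of the intersection cycle $H_{t_1,t_2} \cdot X$, where $H_{t_1,t_2}$ is the hyperplane $t_1\ell_1 + t_2\ell_2 = 0$ containing $L_i$. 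Under this identification, the Cayley form $\psi_C$ of Definition \ref{definition of Cayley variety and Cayley form} coincides, up to a non-zero scalar, with $\psi_{X,\text{conic}}^{t_1,t_2}$ from the decomposition \eqref{Cayley form of a line and a conic}, so by the product formula $H_K(\psi_C) = H_K(\psi_{X,\text{conic}}^{t_1,t_2})$.

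The next step is to ensure that no conic is counted twice. Within a single component $H_i$, the map $[t_1:t_2] \mapsto C$ is injective because $C$ determines its ambient plane, hence the parameter up to scaling. Across two different components $H_i$ and $H_j$, each conic $C$ determines a unique residual line (the plane of $C$ meets $X$, by B\'ezout, in $C$ plus exactly one line), so a conic appears in the parameterization of at most one component. Summing the contributions, I would write
\[
\#\{C \subseteq X \mid H_K(\psi_C) \leqslant B\} \;\leqslant\; \sum_{i \in I} \#\{[t_1:t_2] \in \mathbb P^1_K(K) \mid H_K(\psi_{X,\text{conic}}^{t_1,t_2}) \leqslant B\}.
\]

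Finally I would apply Proposition \ref{uniform upper bound of cayley form in degree 2 curve} to each of the at most $27$ components, each contributing at most $16\, C_1'(20,1,K)\, B$ parameters; this yields the claimed bound $27 \cdot 16 \cdot C_1'(20,1,K)\, B = 432\, C_1'(20,1,K)\, B$. The quantitative heart of the argument is already contained in Proposition \ref{uniform upper bound of cayley form in degree 2 curve}, which combines the parameterization of \S \ref{parameterizing a family of cayley forms} with the point-counting estimate of Corollary \ref{uniform upper bound of points of bounded height in plane curves}; hence the only genuinely new ingredient is the geometric bookkeeping above. I expect the subtlest point to be the verification of injectivity across components in the edge case where a conic degenerates (two lines), but once one restricts attention to absolutely irreducible conics as in \S \ref{geometry of hilbert scheme of cubic}, this is handled directly by the B\'ezout-theoretic argument just sketched.
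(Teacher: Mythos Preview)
Your proposal is correct and follows exactly the paper's approach: the paper's own proof reads in full ``This is a direct consequence of Proposition \ref{uniform upper bound of cayley form in degree 2 curve} via the fact that $X$ contains at most $27$ lines referred in \S \ref{summary: geometry of non-ruled cubic surface}.'' You have simply supplied the bookkeeping (identification $\psi_C = \psi_{X,\text{conic}}^{t_1,t_2}$ up to scalar, injectivity within and across components) that the paper leaves implicit.
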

\begin{proof}
This is a direct consequence of Proposition \ref{uniform upper bound of cayley form in degree 2 curve}  via the fact that $X$ contains at most $27$ lines referred in \S \ref{summary: geometry of non-ruled cubic surface}. 
\end{proof}
\subsubsection{}
Here we give a uniform upper bound of rational points of bounded height in a non-ruled cubic surface provided by conics. 
\begin{theo}\label{estimate of rational points in conics of cubic}
Let $\overline{\sE}$ be the Hermitian vector bundle of rank $4$ defined in \S \ref{O{n+1} with l2-norm}, $X$ be a non-ruled cubic surface in $\mathbb P(\sE_K)$. Then for a $B\in\mathbb R$, there are at most 
\begin{eqnarray*}
& &93312\exp\left(\frac{23040\log 2\log 12}{137}\right)C_1''(3,1,K)C_1'(20,1,K)C_1'(3,2,K)^{3/4}\\
& &\cdot B^{\frac{3\sqrt{3}}{8}+1}\max\left\{\frac{\log B}{[K:\mathbb Q]},2\right\}\\
&\ll_K&B^{\frac{3\sqrt{3}}{8}+1}\max\left\{\log B,2\right\},
\end{eqnarray*}
\[\]
points in $S(X;B)$ provided by conics in $X$, where the constants $C'_1(n,d,K)$ and $C''_1(n,d,K)$ are introduced in Corollary \ref{global determinant of hypersurface in a linear subvariety without height term} and Corollary \ref{global determinant of hypersurface in a linear subvariety with height term} respectively. 
\end{theo}
\begin{proof}
By Corollary \ref{global determinant of hypersurface in a linear subvariety without height term}, the set $S(X;B)$ can be covered by a hypersurface of degree smaller that 
\[27C'_1(3,2,K)B^{\frac{\sqrt{3}}{2}},\] 
which does not contain the generic point of $X$. By the B\'ezout's theorem in the intersection theory (cf. \cite[Proposition 8.4]{Fulton}), there are at most $\frac{81}{2}C'_1(3,2,K)B^{\frac{\sqrt{3}}{2}}$ conics in the intersection cycle of $X$ and the hypersurface determined above. 

From the uniform upper bound of rational points of bounded height in Corollary \ref{uniform upper bound of points of bounded height in plane curves} and the comparison of heights in Proposition \ref{compare arakelov height and chow form height}, for a conic $C$ in $\mathbb P(\sE_K)$, we have 
\[\#S(C;B)\leqslant 8\exp\left(\frac{23040\log 2\log 12}{137}\right)C_1''(3,1,K) \frac{B}{H_{K}(\psi_C)^{1/4}}\max\left\{\frac{\log B}{[K:\mathbb Q]},2\right\}.\]

Then by Corollary \ref{number of conics with height smaller than B}, the rational points in $X$ of height smaller than $B$ provided by conics is at most 
\begin{eqnarray*}
& &8\exp\left(\frac{23040\log 2\log 12}{137}\right)C_1''(3,1,K)B\max\left\{\frac{\log B}{[K:\mathbb Q]},2\right\}\\
& &\cdot\int_1^{\frac{81}{2}C'_1(3,2,K)B^{\frac{\sqrt{3}}{2}}}\frac{432C_1'(20,1,K)}{k^{1/4}}\mathrm dk\\
&\leqslant&93312\exp\left(\frac{23040\log 2\log 12}{137}\right)C_1''(3,1,K)C_1'(20,1,K)C_1'(3,2,K)^{3/4}\\
& &\cdot B^{\frac{3\sqrt{3}}{8}+1}\max\left\{\frac{\log B}{[K:\mathbb Q]},2\right\},
\end{eqnarray*}
which obtains the desired bound. 
\end{proof}

\subsection{Counting integral points}
For a non-ruled cubic surface, we consider the conics lying in it, and the height of the higher degree part of whose Cayley form is bounded. In this part, we will give a uniform upper bound of the number of such conics.
\subsubsection{}
We keep all the above notations and constructions. By Proposition \ref{degree of the cayley form of line and conic}, we write $\psi_{X,\text{conic}}^{t_1,t_2}$ as the form
\[\psi_{X,\text{conic}}^{t_1,t_2}=\psi_{X,2}^{t_1,t_2}+\psi_{X,1}^{t_1,t_2}+\psi_{X,0}^{t_1,t_2},\]
where $\psi_{X,i}^{t_1,t_2}$ is the homogeneous degree $i$ part of the variables $(s_0\wedge s_i)_{1\leqslant \leqslant 3}$ of $\psi_{X,\text{conic}}^{t_1,t_2}$ with $i=0,1,2$. In addition, we denote
\begin{eqnarray}\label{def of a_1,...,a_6}
\psi_{X,2}^{t_1,t_2}&=&a_1(t_1,t_2)(s_0\wedge s_1)^2+a_2(t_1,t_2)(s_0\wedge s_2)^2+a_3(t_1,t_2)(s_0\wedge s_3)^2\\
& &a_4(t_1,t_2)(s_0\wedge s_1)(s_0\wedge s_2)+a_5(t_1,t_2)(s_0\wedge s_1)(s_0\wedge s_3)\nonumber\\
& &+a_6(t_1,t_2)(s_0\wedge s_2)(s_0\wedge s_3),\nonumber
\end{eqnarray}
where $a_i(t_1,t_2)$ is a homogeneous polynomial with variables $t_1,t_2$ of degree $2$, $i=1,\ldots,6$.

In addition, we suppose $X$ is defined by the homogeneous polynomial $f(T_0,T_1,T_2,T_3)$, where the homogeneous degree $3$ part of the variables $T_1,T_2,T_3$ is absolutely irreducible. In this case, we have the following property of $\psi_{X,2}^{t_1,t_2}$. 

\begin{prop}\label{homogeneous deg 2 part non zero}
With all the notations and conditions above. The polynomials $a_1(t_1,t_2),\ldots,a_6(t_1,t_2)$ cannot be zero simultaneously for all $t_1,t_2\in K$ not all zero. In other words, $\psi_{X,2}^{t_1,t_2}\neq0$ for all $(t_1,t_2)\in K^2\smallsetminus\{(0,0)\}$.
\end{prop}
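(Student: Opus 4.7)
The plan is to translate the algebraic condition $\psi_{X,2}^{t_1,t_2}\equiv 0$ into a geometric incidence condition and then exclude it via the hypotheses.

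\emph{Plücker interpretation.} A line $L$ in $\mathbb{P}(\sE_K)$ passes through $P_0 := [1:0:0:0]$ if and only if its Plücker coordinates satisfy $s_i\wedge s_j = 0$ for all $i,j \geq 1$: choose $P_0$ together with a second point $[b_0:b_1:b_2:b_3]$ spanning $L$, and compute $p_{ij}=a_ib_j-a_jb_i$ with $a=(1,0,0,0)$. Substituting these vanishings into the Cayley form $\psi_{X,\text{conic}}^{t_1,t_2}$ annihilates both $\psi_{X,0}^{t_1,t_2}$ and $\psi_{X,1}^{t_1,t_2}$, leaving exactly $\psi_{X,2}^{t_1,t_2}$ viewed as a form on the $\mathbb{P}^2$ of lines through $P_0$. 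Since the Cayley form vanishes at $L$ precisely when $L$ meets $C_{t_1,t_2}$, the identical vanishing $\psi_{X,2}^{t_1,t_2}\equiv 0$ is equivalent to every line through $P_0$ meeting $C_{t_1,t_2}$, and a short case analysis (smooth conic, pair of lines, or double line, and whether $P_0$ lies in the ambient plane) shows this is in turn equivalent to $P_0 \in C_{t_1,t_2}$.

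\emph{Ruling out the incidence.} It therefore suffices to show $P_0 \notin C_{t_1,t_2}$ for every $(t_1,t_2)\neq(0,0)$. Suppose for contradiction that $P_0 \in C_{t_1^0,t_2^0}$; then $P_0 \in X$ and $P_0 \in H_{t_1^0,t_2^0}$. I would project $X$ from $P_0$ onto $\{T_0 = 0\}$ and track the residual conic: since $P_0$ lies on it, $C_{t_1^0,t_2^0}$ collapses onto the line $L^0 := H_{t_1^0,t_2^0}\cap\{T_0=0\}$. By Bezout on the absolutely irreducible cubic $\{f_3 = 0\}\subset\{T_0=0\}$, the intersection $\{f_3=0\}\cap L^0$ consists of exactly three points, which must decompose as $P_\ell := \ell\cap\{T_0=0\}$ (a single point, guaranteed by Lemma \ref{r-irreducibility -> no line in T_0=0}) together with the two points of $C_{t_1^0,t_2^0}\cap\{T_0=0\}$. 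Combining this picture with the non-ruled classification of $X$ from \S \ref{summary: geometry of non-ruled cubic surface} (so $X$ is normal, not a cone, and not a skew ruled cubic, whence the tangent plane to $X$ at any smooth point of $\ell$ is unique) yields the contradiction.

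\emph{Main obstacle.} The hardest step is the last one: the hypothesis on $f_3$ constrains only the section of $X$ by the plane $\{T_0=0\}$, while the hypothetical bad incidence $P_0\in C_{t_1^0,t_2^0}$ lives in the opposite affine chart. The technical heart of the proof is to bridge these two charts via the projection from $P_0$, coupled with a careful local analysis at $P_0$ exploiting both the uniqueness of the tangent plane to $X$ there and the irreducibility of the cubic at infinity, to rule out the tangent-plane configuration that would otherwise permit the incidence.
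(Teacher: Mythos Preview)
Your Pl\"ucker interpretation contains a coordinate error that derails the argument. The Cayley form $\psi_{X,\text{conic}}^{t_1,t_2}$ lives in $\sym^2\bigl(\bigwedge^2 \sE_K^\vee\bigr)$, so the variables $s_i\wedge s_j$ are \emph{dual} Pl\"ucker coordinates: a line presented as the intersection of two hyperplanes $\ell_1=\ell_2=0$ has coordinate $\ell_1\wedge\ell_2$. When you compute $p_{ij}=a_ib_j-a_jb_i$ from two spanning \emph{points}, you are producing \emph{primal} Pl\"ucker coordinates, which in $\mathbb P^3$ are Hodge-dual to the $s_i\wedge s_j$. Concretely, a line through $P_0=[1{:}0{:}0{:}0]$ is cut out by two forms in $\langle s_1,s_2,s_3\rangle$, so its dual coordinates satisfy $s_0\wedge s_i=0$ with the $s_i\wedge s_j$ ($i,j\geqslant 1$) \emph{nonzero} --- exactly the opposite of what you assert. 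Conversely, the locus $s_i\wedge s_j=0$ for all $i,j\geqslant 1$ parametrizes lines \emph{contained in the plane $\{T_0=0\}$}: write such a line as $\{T_0=0\}\cap\{c_1T_1+c_2T_2+c_3T_3=0\}$ and wedge the two defining forms.

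With this correction the argument becomes the paper's proof, and your ``hardest step'' evaporates. If $\psi_{X,2}^{t_1,t_2}\equiv 0$ then every line in $\{T_0=0\}$ meets the conic $C_{t_1,t_2}$; since $C_{t_1,t_2}$ sits in the plane $H_{t_1,t_2}$ and a generic line of $\{T_0=0\}$ avoids any prescribed finite set, this forces $C_{t_1,t_2}\subset\{T_0=0\}$, hence $H_{t_1,t_2}=\{T_0=0\}$ (otherwise $C_{t_1,t_2}$ would lie in the line $H_{t_1,t_2}\cap\{T_0=0\}$), so the fixed line $\ell\subset X$ also lies in $\{T_0=0\}$, contradicting Lemma~\ref{r-irreducibility -> no line in T_0=0}. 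By contrast, the incidence $P_0\in C_{t_1,t_2}$ that you attempt to exclude is \emph{not} ruled out by the hypotheses: nothing prevents $f(1,0,0,0)=0$, and when this holds the unique $(t_1^0,t_2^0)$ with $P_0\in H_{t_1^0,t_2^0}$ can perfectly well place $P_0$ on the residual conic. Your projection-and-tangent-plane sketch does not yield a contradiction here, and cannot, because the algebraic condition actually encoded by ``every line through $P_0$ meets $C$'' is $\psi_{X,0}^{t_1,t_2}\equiv 0$, not the vanishing of $\psi_{X,2}^{t_1,t_2}$ under study.
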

\begin{proof}
If there exists a pair $(\lambda_1,\lambda_2)\in K^2\smallsetminus\{(0,0)\}$ such that $a_1(\lambda_1,\lambda_2)=\cdots=a_6(\lambda_1,\lambda_2)=0$, then for all the non-zero terms in $\psi_{X,\text{conic}}^{\lambda_1,\lambda_2}$, they must contain at least one in $s_1\wedge s_2$, $s_1\wedge s_3$, $s_2\wedge s_3$. 

We consider the Pl\"ucker coordinate of all the lines in the plane defined by $T_0=0$ in $\mathbb P\left(\bigwedge^2\sE_K^\vee\right)$, which are complete intersections of $T_0=0$ with a plane defined by a linear form $c_1T_1+c_2T_2+c_3T_3=0$, where $c_1,c_2,c_3\in K$ not all zero. Then these kinds of lines satisfy $s_1\wedge s_2=s_1\wedge s_3=s_2\wedge s_3=0$, and at least one of $s_0\wedge s_1$, $s_0\wedge s_2$, $s_0\wedge s_3$ is not zero. 

If $\psi_{X,\text{conic}}^{t_1,t_2}$ satisfies the above hypothesis, then for all kinds of lines must intersect with this conic non-empty. Then this conic must lie in the plane $T_0=0$, and so does the line obtained by the intersection of $H_{\lambda_1,\lambda_2}$ and $X$, which contradicts to Lemma \ref{r-irreducibility -> no line in T_0=0}. 
\end{proof}
\subsubsection{}
Similar to \S \ref{parameterizing a family of cayley forms}, we have the following parameterizing of $\psi_X^{t_1,t_2}$. We denote by $x_1=t_1^2$, $x_2=t_1t_2$, $x_3=t_2^2$. For every $i=1,\ldots,6$, we denote by $a_i(x_1,x_2,x_3)=a_i(t_1,t_2)$ in \eqref{def of a_1,...,a_6}, where we consider $a_i(x_1,x_2,x_3)$ as a linear form of the variables $x_1,x_2,x_3$. In addition, let $\iota_1$ be the same as \eqref{embedding P1 into P2}, and 
\begin{equation}\label{embedding P2 into P5}
\begin{array}{rrcl}
\iota'_2:&\mathbb P^2_K&\dashrightarrow&\mathbb P^5_K\\
&[x_1:x_2:x_3]&\mapsto&[a_1(x_1,x_2,x_3):\cdots:a_6(x_1,x_2,x_3)]
\end{array}
\end{equation}
be a rational map. By Proposition \ref{homogeneous deg 2 part non zero}, the composition $\iota'=\iota'_2\circ \iota_1$ with  
\begin{equation}\label{embedding P1 into P5}
\begin{array}{rrcl}
\iota':&\mathbb P^1_K&\longrightarrow&\mathbb P^5_K\\
&[t_1:t_2]&\mapsto&[a_1(t_1,t_2):\cdots:a_6(t_1,t_2)]
\end{array}
\end{equation}
is a morphism. 

Similar to Lemma \ref{rank of the matrix from all coefficients of cayley form}, we have the following result. We omit its proof since it is deduced from Proposition \ref{homogeneous deg 2 part non zero} directly. 
\begin{lemm}\label{rank of the matrix from homogeneous deg 2 part of cayley form}
With all the above notations and constructions. Suppose that the non-ruled cubic surface in $\mathbb P(\sE_K)$ defined by the polynomial $f(T_0,\ldots,T_4)$, and the homogeneous degree $3$ of the variables $T_1,T_2,T_3$ is absolutely irreducible. Then the rank of the matrix of coefficients of $a_1,\ldots,a_6$ in \eqref{embedding P2 into P5} is either $2$ or $3$. 
\end{lemm}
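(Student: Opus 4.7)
The matrix in question is of size $6\times 3$ (its rows are the coefficient vectors of $a_1,\ldots,a_6$ with respect to $x_1,x_2,x_3$), so its rank is automatically at most $3$. The plan is to rule out both rank $0$ and rank $1$. Ruling out rank $0$ is immediate: it would mean every $a_i$ is the zero linear form in $x_1,x_2,x_3$, whence $\psi_{X,2}^{t_1,t_2}\equiv 0$ as a polynomial, contradicting Proposition \ref{homogeneous deg 2 part non zero} evaluated at any nonzero pair $(t_1,t_2)\in K^2$.

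The substantive case is rank $1$. The plan here is to assume $a_i(x_1,x_2,x_3)=c_iL(x_1,x_2,x_3)$ for some nonzero linear form $L$ and constants $c_1,\ldots,c_6\in K$ not all zero, and then to derive a contradiction by restricting $L$ to the image of $\iota_1$, which is the smooth plane conic $\{x_1x_3-x_2^2=0\}\subset\mathbb P^2_K$. By B\'ezout's theorem, the line $\{L=0\}$ meets this conic in two points counted with multiplicity over $\overline{K}$, producing $[\lambda_1:\lambda_2]\in\mathbb P^1(\overline{K})$ with $L(\lambda_1^2,\lambda_1\lambda_2,\lambda_2^2)=0$. Consequently $a_i(\lambda_1,\lambda_2)=0$ for every $i$, and $\psi_{X,2}^{\lambda_1,\lambda_2}=0$. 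The argument is then closed by rerunning the geometric reasoning from the proof of Proposition \ref{homogeneous deg 2 part non zero} over $\overline{K}$: every monomial of $\psi_{X,\text{conic}}^{\lambda_1,\lambda_2}$ must contain one of $s_1\wedge s_2$, $s_1\wedge s_3$, $s_2\wedge s_3$, so this Cayley form vanishes on the Pl\"ucker coordinates of every line in the plane $\{T_0=0\}$; hence every such line meets the residual conic (or pair of lines) over $\overline{K}$, forcing the residual component into $\{T_0=0\}$ and therefore $H_{\lambda_1,\lambda_2}=\{T_0=0\}$; since the fixed line generated by $\ell_1,\ell_2$ lies in every $H_{\lambda_1,\lambda_2}$, it would then lie in $\{T_0=0\}$, contradicting Lemma \ref{r-irreducibility -> no line in T_0=0}.

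The main thing to verify with care is the passage to $\overline{K}$ in this last step: Proposition \ref{homogeneous deg 2 part non zero} is stated over $K$, but the argument transfers without loss because Lemma \ref{r-irreducibility -> no line in T_0=0} only uses that $f(0,T_1,T_2,T_3)$ is absolutely irreducible (a property stable under base extension), and every intermediate step (computing the intersection cycles, the Pl\"ucker interpretation of the Cayley form, and the observation that the residual conic spans its defining plane) is purely geometric and survives base change. Once this is checked, the combination of the two steps above establishes $\operatorname{rank}\in\{2,3\}$.
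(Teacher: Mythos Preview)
Your proposal is correct and follows essentially the same route as the paper. The paper omits the proof entirely, indicating only that it parallels Lemma~\ref{rank of the matrix from all coefficients of cayley form} with Proposition~\ref{homogeneous deg 2 part non zero} in place of Proposition~\ref{all cayley form non zero}; your argument makes explicit the passage to $\overline{K}$ needed to exclude rank~$1$ (via a $\overline{K}$-rational zero of the common linear form on the conic $x_1x_3=x_2^2$), a point the paper leaves implicit in its ``no common factor'' phrasing.
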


By Lemma \ref{rank of the matrix from homogeneous deg 2 part of cayley form}, we can determine the geometric structure of $\im(\iota')$ in \eqref{embedding P1 into P5}. The proof has no essential difference from that of Proposition \ref{image of coefficients of cayley form}, hence we omit it. 
\begin{prop}\label{image of leading coefficient of cayley form of degree 2}
We keep all the above notations and constructions. If the matrix determined in Lemma \ref{rank of the matrix from homogeneous deg 2 part of cayley form} is rank $3$, then $\im(\iota')$ is a geometrically integral curve in $\mathbb P^5_K$ of degree $2$. If the matrix determined in Lemma \ref{rank of the matrix from homogeneous deg 2 part of cayley form} is rank $2$, then $\im(\iota')$ is a line in $\mathbb P^5_K$ of degree $2$ endowed with a double covering $\iota':\mathbb P^1_K\rightarrow\im(\iota')$. 
\end{prop}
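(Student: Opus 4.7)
The proof proposal follows the structure of the analogous Proposition \ref{image of coefficients of cayley form}, but this time the linear map $\iota'_2$ lands in $\mathbb{P}^5_K$ rather than $\mathbb{P}^{20}_K$, and we must invoke Proposition \ref{homogeneous deg 2 part non zero} (rather than Proposition \ref{all cayley form non zero}) to rule out the common vanishing of the relevant forms on the image of $\iota_1$. The plan is to split along the dichotomy supplied by Lemma \ref{rank of the matrix from homogeneous deg 2 part of cayley form} and use the fact, already noted in the proof of Proposition \ref{image of coefficients of cayley form}, that $\iota_1$ is a closed immersion of $\mathbb{P}^1_K$ onto the smooth conic $C \subset \mathbb{P}^2_K$ cut out by $x_1 x_3 - x_2^2 = 0$.

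In the rank $3$ case, the three coordinate linear forms $(a_i(x_1,x_2,x_3))_{1 \le i \le 6}$ already span the full space of linear forms on $\mathbb{P}^2_K$, so the rational map $\iota'_2$ extends to a closed immersion $\mathbb{P}^2_K \hookrightarrow \mathbb{P}^5_K$ whose image is a linearly embedded plane. Restricting this immersion to the smooth conic $C = \iota_1(\mathbb{P}^1_K)$, we conclude that $\im(\iota')$ is isomorphic to $C$, hence a geometrically integral curve of degree $2$ contained in a $\mathbb{P}^2_K \subset \mathbb{P}^5_K$.

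In the rank $2$ case, I pick a basis $b_1, b_2$ of the two-dimensional subspace spanned by $a_1,\ldots,a_6$, so that $\iota'_2$ factors as the projection $\pi: \mathbb{P}^2_K \dashrightarrow \mathbb{P}^1_K$, $[x_1:x_2:x_3] \mapsto [b_1:b_2]$, composed with a linear embedding of $\mathbb{P}^1_K$ onto a line $\ell \subset \mathbb{P}^5_K$. The projection $\pi$ is defined away from the single point $p = V(b_1, b_2) \in \mathbb{P}^2_K$. The key step is to check that $p \notin C$: this is precisely equivalent to saying the $a_i(t_1,t_2)$ do not vanish simultaneously at any $[t_1:t_2] \in \mathbb{P}^1_K(\overline{K})$, which is exactly Proposition \ref{homogeneous deg 2 part non zero}. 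Consequently $\pi|_C : C \to \ell$ is a morphism defined everywhere, and since $C$ is a smooth plane conic and $p \notin C$, the classical linear projection formula gives that $\pi|_C$ has degree $2$ (a generic line through $p$ meets $C$ in two points). Composing with the isomorphism $\iota_1 : \mathbb{P}^1_K \xrightarrow{\sim} C$, which has degree $1$, yields the double covering $\iota' : \mathbb{P}^1_K \to \ell$.

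The only genuine geometric input beyond linear algebra is the assertion that $p \notin C$ in the rank $2$ case, and the main obstacle to a self-contained argument is precisely the reduction of this assertion to Proposition \ref{homogeneous deg 2 part non zero}; once this point has been isolated, the remainder of the proof is a short dimension/degree count of the type already displayed in the proof of Proposition \ref{image of coefficients of cayley form}.
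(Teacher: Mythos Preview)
Your proposal is correct and follows essentially the same approach as the paper, which in fact omits the proof entirely and refers back to Proposition~\ref{image of coefficients of cayley form}. Your version is more detailed and makes explicit the one geometric point the paper leaves implicit: in the rank~$2$ case, the center of projection $p=V(b_1,b_2)$ lies off the conic $C$ precisely by Proposition~\ref{homogeneous deg 2 part non zero} (replacing Proposition~\ref{all cayley form non zero} from the earlier argument), so the projection $C\to\ell$ is a finite degree-$2$ morphism.
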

\subsubsection{}
By Proposition \ref{image of leading coefficient of cayley form of degree 2}, we can estimate the conics in a non-ruled cubic surface $X$ parameterized in an irreducible component of the Hilbert scheme with the height of $\psi^{t_1,t_2}_{X,2}$ bounded. More precisely, we want to study the set 
\[\left\{[t_1:t_2]\in\mathbb P^1_K(K)\mid H_K\left(\psi^{t_1,t_2}_{X,2}\right)\leqslant B\right\}, \]
where the naive height $H_K(\ndot)$ of a polynomial is defined at \eqref{naive height of a polynomial or hypersurface}. Similar to Proposition \ref{uniform upper bound of cayley form in degree 2 curve}, we can obtain the following estimate, and we omit its proof.  
\begin{prop}\label{uniform upper bound of points in degree 2 curve}
We have 
\[\#\left\{[t_1:t_2]\in\mathbb P^1_K(K)\mid H_K\left(\psi^{t_1,t_2}_{X,2}\right)\leqslant B\right\}\leqslant 16C_1'(5,2,K) B\ll_KB,\]
where the constant $C_1'(n,d,K)$ is introduced in Corollary \ref{global determinant of hypersurface in a linear subvariety without height term}.
\end{prop}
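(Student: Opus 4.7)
The plan is to mirror the proof of Proposition~\ref{uniform upper bound of cayley form in degree 2 curve} verbatim, replacing the ambient projective space $\mathbb{P}^{20}_K$ by $\mathbb{P}^5_K$ and using Proposition~\ref{image of leading coefficient of cayley form of degree 2} in place of Proposition~\ref{image of coefficients of cayley form}. By the definition of $\iota'$ in \eqref{embedding P1 into P5}, the naive height of the six coefficient polynomial $\psi^{t_1,t_2}_{X,2}$ agrees (up to the choice of a suitable integral representative, cf.\ the argument of Lemma~\ref{existence of integral Cayley form with bounded height}) with the Arakelov height of the point $\iota'([t_1:t_2]) \in \mathbb{P}^5_K(K)$. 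So the set to be bounded injects (with multiplicity at most $2$) into $S(\operatorname{Im}(\iota');B)$, and it suffices to bound $\#S(\operatorname{Im}(\iota');B)$.

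I would then split into the two cases of Lemma~\ref{rank of the matrix from homogeneous deg 2 part of cayley form}. If the rank equals $3$, then by Proposition~\ref{image of leading coefficient of cayley form of degree 2} the image $\operatorname{Im}(\iota')$ is a geometrically integral curve of degree~$2$ lying in a plane of $\mathbb{P}^5_K$, obtained as the linear image of the plane conic $\iota_1(\mathbb{P}^1_K) \subset \mathbb{P}^2_K$. Corollary~\ref{uniform upper bound of points of bounded height in plane curves} with $n=5$ and $\delta=2$ directly yields the bound $16\,C'_1(5,1,K)\,B$.

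If the rank equals $2$, then $\operatorname{Im}(\iota')$ is a line in $\mathbb{P}^5_K$ and $\iota':\mathbb{P}^1_K \to \operatorname{Im}(\iota')$ is a double cover. Following the second half of the proof of Proposition~\ref{uniform upper bound of cayley form in degree 2 curve}, I project onto two linearly independent columns to obtain a polarized morphism $\varphi:\mathbb{P}^1_K \to \mathbb{P}^1_K$ with $\varphi^*\mathcal{O}(1) \cong \mathcal{O}(2)$. Applying Corollary~\ref{uniform upper bound of points of bounded height in plane curves} to a line in $\mathbb{P}^2_K$ with height bound $B^2$ gives a $C'_1(2,1,K)\,B^2$ bound, and extracting the square root via the polarization isomorphism (height $B^2$ with respect to $\mathcal{O}(2)$ corresponds to height $B$ with respect to $\mathcal{O}(1)$ on $\mathbb{P}^1_K$) produces a linear-in-$B$ bound.

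The main obstacle is only cosmetic: one needs to check that the constants from the two cases fit under a single expression of the form $16\,C'_1(5,2,K)\,B$ after enlarging constants where necessary, and to track the factor of $2$ coming from the double cover in the rank-$2$ case. No new arithmetic ingredient beyond those already used in Proposition~\ref{uniform upper bound of cayley form in degree 2 curve} is required, which is why the author is content to omit the proof.
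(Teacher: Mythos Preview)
Your proposal is correct and follows exactly the approach the paper intends: the paper explicitly states that the proof is ``similar to Proposition~\ref{uniform upper bound of cayley form in degree 2 curve}'' and omits it, and your argument is precisely that proof with $\mathbb{P}^{20}_K$ replaced by $\mathbb{P}^5_K$ and Proposition~\ref{image of leading coefficient of cayley form of degree 2} in place of Proposition~\ref{image of coefficients of cayley form}. The constant mismatch you flag (your rank-$3$ case naturally produces $C_1'(5,1,K)$ while the statement reads $C_1'(5,2,K)$) appears to be a typographical inconsistency in the paper rather than a gap in your argument.
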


The following corollary gives a control of all conics of bounded height in a non-ruled cubic surface in the sense studied in this part, which is deduced from Proposition \ref{uniform upper bound of points in degree 2 curve} directly.   
\begin{coro}\label{number of conics 2-height smaller than B}
Let $X'$ be a surface in $\mathbb A_0(\sE_K)$ satisfying $1$-irreducible condition in Definition \ref{r-irreducible}, whose projective closure $X$ is a non-ruled geometrically integral cubic surface in $\mathbb P(\sE_K)$, and $C$ be a conic in $X$. Let $\psi_{C,2}$ be defined in \S \ref{change of coordinate: multiple by H}. Then we have
\[\#\{C\subseteq X\mid H_K(\psi_{C,2})\leqslant B\}\leqslant432C_1'(5,2,K)B\ll_{K}B,\]
where the height $H_K(\ndot)$ is the classic height defined in \eqref{log naive height}, and the constant $C_1'(n,d,K)$ is introduced in Corollary \ref{global determinant of hypersurface in a linear subvariety without height term}. 
\end{coro}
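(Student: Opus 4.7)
The plan is to reduce the counting problem to a sum over the finitely many irreducible components of the Hilbert scheme of conics in $X$, and then apply Proposition \ref{uniform upper bound of points in degree 2 curve} to each component. First, I would invoke the geometric description from \S \ref{geometry of Hilbert scheme of cubic} (which in turn uses \S \ref{summary: geometry of non-ruled cubic surface}): since $X$ is a non-ruled geometrically integral cubic surface in $\mathbb P(\sE_K)$, the Hilbert scheme $\Hilb^{2t+1}(X)$ is one-dimensional and decomposes as $\bigcup_{i\in I} H_i$ with $\#I\leqslant 27$, each $H_i$ being a non-empty open subscheme of $\mathbb P^1_K$. This restricts the number of ``families'' of conics to consider by the bound $27$ on the number of lines in $X$.

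Next, I would fix one irreducible component $H_i$ and use the setup of \S \ref{first property of the cayley form of a conic in a cubic} and \S \ref{parameterizing a family of cayley forms}: the component is parameterized by a pair $[t_1:t_2]\in\mathbb P^1_K$ encoding the hyperplane $H_{t_1,t_2}$ generated by the two linear forms $\ell_1,\ell_2$ attached to the underlying $K$-rational line, and the corresponding conic $C = C_{t_1,t_2}$ has Cayley form whose homogeneous degree $2$ part in the variables $(s_0\wedge s_i)_{1\leqslant i\leqslant 3}$ is precisely $\psi_{X,2}^{t_1,t_2}$. Thus the identification $\psi_{C,2}=\psi_{X,2}^{t_1,t_2}$ (in the sense of \S \ref{change of coordinate: multiple by H}) holds on each component.

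I would then apply Proposition \ref{uniform upper bound of points in degree 2 curve} to each $H_i$: the set of $[t_1:t_2]\in H_i(K)$ with $H_K(\psi_{X,2}^{t_1,t_2})\leqslant B$ has cardinality at most $16\,C_1'(5,2,K)\,B$. Summing over the at most $27$ components yields
\[
\#\{C\subseteq X\mid H_K(\psi_{C,2})\leqslant B\}\leqslant 27\cdot 16\,C_1'(5,2,K)\,B = 432\,C_1'(5,2,K)\,B,
\]
which is the claimed bound. The implicit constant depends only on $K$ through $C_1'(5,2,K)$.

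There is no serious obstacle here: the non-trivial work has already been done in Proposition \ref{homogeneous deg 2 part non zero} (showing $\psi_{X,2}^{t_1,t_2}$ is genuinely non-zero, so the parameterization map $\iota'$ of \eqref{embedding P1 into P5} is a morphism), Proposition \ref{image of leading coefficient of cayley form of degree 2} (identifying $\im(\iota')$ as a degree $2$ plane curve or a doubly-covered line), and Proposition \ref{uniform upper bound of points in degree 2 curve} (counting via Corollary \ref{uniform upper bound of points of bounded height in plane curves}). The only point meriting a brief verification is that the $1$-irreducibility hypothesis on $X'$ is what licenses Lemma \ref{r-irreducibility -> no line in T_0=0}, hence Proposition \ref{homogeneous deg 2 part non zero}, and hence the applicability of Proposition \ref{uniform upper bound of points in degree 2 curve} on every component $H_i$.
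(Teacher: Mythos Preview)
Your proposal is correct and follows essentially the same approach as the paper: apply Proposition \ref{uniform upper bound of points in degree 2 curve} to each of the at most $27$ irreducible components of $\Hilb^{2t+1}(X)$ (the bound $27$ coming from \S \ref{summary: geometry of non-ruled cubic surface}), and multiply to obtain $27\cdot 16\,C_1'(5,2,K)\,B=432\,C_1'(5,2,K)\,B$. The paper states this corollary as a direct consequence without further elaboration, so your writeup is in fact more detailed than the original.
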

\subsubsection{}
For non-ruled cubic surface satisfying the NCC condition in Definition \ref{NCC condition} and $1$-irreducible condition in Definition \ref{r-irreducible}, we give a uniform upper bound of the number of integral points with bounded height provided by conics. 

\begin{theo}\label{estimate of integral points in conics of cubic}
Let $\overline{\sE}$ be the Hermitian vector bundle of rank $4$ over $\spec\O_K$ defined in \S \ref{O{n+1} with l2-norm}, $X'$ be a surface in $\mathbb A_0(\sE_K)$, whose projective closure $X$ is a geometrically integral cubic surface in $\mathbb P(\sE_K)$. Suppose that $X'$ satisfies the NCC condition in Definition \ref{NCC condition} and is $2$-irreducible in Definition \ref{r-irreducible}. Then for a $B\in\mathbb R$, there are at most 
\begin{eqnarray*}
& &93312C_4(3,1,K)C_1'(5,2,K)C_4(3,2,K)^{\frac{3}{4}}B^{\frac{\sqrt{3}}{4}+\frac{1}{2}}\max\{\log B,2\}\\
&\ll_K&B^{\frac{\sqrt{3}}{4}+\frac{1}{2}}\max\{\log B,2\}
\end{eqnarray*}
points in $s(X';B)$ provided by conics in $X'$, where the set $s(X';B)$ is defined in \S \ref{affine open subset of P(E)}, and the constants $C_1'(n,d,K)$ and $C_4(n,d,K)$ are introduced in Corollary \ref{global determinant of hypersurface in a linear subvariety without height term} and Corollary \ref{improved version of the control of integral points} respectively. 
\end{theo}
\begin{proof}
From the B\'ezout theorem in the intersection theory (cf. \cite[Proposition 8.4]{Fulton}) and Corollary \ref{integral points in affine surface}, the set $s(X';B)$ is covered by a hypersurface of degree smaller than 
\[27C_4(3,2,K)B^{\frac{1}{\sqrt{3}}},\]
where the degree of an affine hypersurface means that in its projective closure in $\mathbb P(\sE_K)$. Then there are at most $\frac{81}{2}C_4(3,2,K)B^{\frac{1}{\sqrt{3}}}$ conics in the intersection cycle of $X$ and the hypersurface determined above. 

By Corollary \ref{improved version of the control of integral points}, for every conic $C'$ in $X'$ and its projective closure $C$ in $\mathbb P(\sE_K)$, we have 
\[\#s(C';B)\leqslant 8C_4(3,1,K)\frac{B^{1/2}\max\{\log B,2\}}{H_{K}(\psi_{C,2})^{1/4}}.\]
Then from Corollary \ref{number of conics 2-height smaller than B}, the integral points in $X'$ of height smaller than $B$ provided by conics is at most 
\begin{eqnarray*}
& &8C_4(3,1,K)B^{1/2}\max\{\log B,2\}\int_{1}^{\frac{81}{2}C_4(3,2,K)B^{\frac{1}{\sqrt{3}}}}\frac{432C_1'(5,2,K)}{k^{\frac{1}{4}}}\mathrm dk\\
&\leqslant&93312C_4(3,1,K)C_1'(5,2,K)C_4(3,2,K)^{\frac{3}{4}}B^{\frac{\sqrt{3}}{4}+\frac{1}{2}}\max\{\log B,2\}.
\end{eqnarray*}
This gives the required bound. 
\end{proof}
\section*{Declaration}
\subsection*{Conflict of interest statement}
The author declares that there is no potential conflict of interest.
\subsection*{Ethical statement}
The author guarantees that the manuscript has neither been published elsewhere nor submitted for publication in another journal, and will not be submitted to other journals while being reviewed. The author certify that they have written entirely the manuscript and that its main results are original, and cite all necessary sources. The author will inform immediately to the editors if any error is found in the manuscript after the submission.
\subsection*{The data availability statement}
The data that support the findings of this study are openly available. 
\backmatter

\bibliography{liu}
\bibliographystyle{smfplain}

\end{document}